\documentclass{article}
\usepackage[utf8]{inputenc}
\usepackage{amsmath,amssymb,amsthm,ulem}
\usepackage{hyperref}
\usepackage{authblk}
\hypersetup{
	colorlinks=true,
	linkcolor=red,
	filecolor=magenta,      
	urlcolor=cyan,
	pdftitle={Overleaf Example},
	pdfpagemode=FullScreen,
}
\usepackage{url}
\usepackage{geometry}
\usepackage{graphicx}
\usepackage{natbib}
\usepackage[dvipsnames,usenames]{xcolor} 
\usepackage{mathtools}
\usepackage{cleveref}
\usepackage{tikz}
\usepackage{comment}
\usepackage[linesnumbered,ruled,vlined]{algorithm2e}
\usepackage{enumitem}

\usepackage{multirow}
\usepackage{caption}
\usepackage{subcaption}
\newtheorem{theorem}{Theorem}
\numberwithin{theorem}{section}
\newtheorem{proposition}[theorem]{Proposition}
\newtheorem{lemma}[theorem]{Lemma}

\theoremstyle{definition}
\newtheorem{definition}[theorem]{Definition}

\newtheorem{remark}[theorem]{Remark}
\newtheorem{example}[theorem]{Example}
\newtheorem{conjecture}[theorem]{Conjecture}

\newtheoremstyle{example_contd}
{0.3cm}
{0.3cm}
{\upshape}
{}
{\bfseries\scshape}
{.}
{0.5em}
{ \thmname{#1} \thmnumber{ #2}\thmnote{#3} (continued)}

\theoremstyle{example_contd}
\newtheorem*{example_contd}{\hspace*{-0.45em}Example}

\graphicspath{{Figures/}}

\newcommand{\vor}{\mathrm{Vor}}

\newcommand{\N}{\mathcal{N}}
\newcommand{\F}{\mathcal{F}}
\newcommand{\X}{\mathcal{X}}

\newcommand{\RR}{\mathbb{R}}
\newcommand{\Rd}{\mathbb{R}^{n*}}
\newcommand{\PP}{{\mathbb{P}}}
\newcommand{\Rp}{{\mathbb{RP}}}

\newcommand{\type}{\mathrm{Type}}

\newcommand{\Conv}{\mathrm{Conv}}
\newcommand{\med}{\mathrm{Med}_h}
\newcommand{\IF}{I_{(F_1,F_2)}}
\newcommand{\JF}{J_{(F_1,F_2)}}
\renewcommand{\l}{\lambda}
\newcommand{\V}{{\mathbb{V}}}
\newcommand{\pn}{{\psi(\N(X))}}

\renewcommand{\L}{\mathcal{L}}
\renewcommand{\S}{\mathcal{S}}

\DeclareMathOperator{\res}{{Res}}
\DeclareMathOperator{\vc}{{VCone}}

\title{Algebraic Distance Optimization in Polyhedral Norms}

 \author{Eliana Duarte, Nidhi Kaihnsa, Julia Lindberg,\\ Angélica Torres, and Madeleine Weinstein}

\begin{document}
	\maketitle
    \begin{abstract}
        We consider the distance minimization problem to a real algebraic variety $X \subseteq \RR^n$ when
        the metric is induced by a polyhedral norm. Each point in the variety has a Voronoi cell whose geometry
        depends on the normal space at the point and the inner normal fan of the polyhedral ball. 
        For codimension-one varieties, we decompose $X$ into sets of  points whose Voronoi cones have the same dimension, which is the expected dimension of their Voronoi cell. 
        We prove that this decomposition is a stratification of $X$ and 
        that each strata is a semialgebraic set.
        We conclude by giving an algebraic description of the medial axis, which is the locus of points whose minimal distance to $X$ is achieved at more than one point on $X$.
    
     \end{abstract}
     
     \section{Introduction}
A distance optimization problem consists of a subset $X$ of a metric space, representing a model, together with an observed point, $v \not\in X$.
     The fundamental question is then: which point of $X$ best explains the observed point $v$? Algebraic models of this type arise in disciplines such as algebraic statistics, computer vision, machine learning, and phylogenetics, among others \cite{breiding2024metric}. The complexity of this optimization problem has been studied from an algebraic perspective for a wide range of models $X$ and distance functions \cite{Breiding2022,breiding2024metric,Draisma2016,Maxim2020}. 

     In this article, we consider the inverse problem. Given a set $X\subset \RR^n$ and a point $p\in X,$ we ask which points in $\RR^n$ are best modeled by $p$. 
     In other words, we study which data points have their distance to $X$ minimized at $p$.
     The set of all such points in $\RR^n$ is called the \textit{Voronoi cell} of $p$. The collection of all of the Voronoi cells forms the \textit{Voronoi diagram} of $X$.

     Voronoi diagrams have been studied extensively, predominantly in the setting where $X$ is a finite subset of $\RR^n$ equipped with the Euclidean metric. In \cite{CIFUENTES2022351}, the authors extend this analysis to the case when $X$ is an algebraic variety, showing that for any smooth point $p\in X$, the Euclidean Voronoi cell is convex and contained in the normal space of $X$ at $p.$ 
        For discrete sets and the tropical norm, the authors in \cite[Theorem 6]{Criado} proved that Voronoi cells are star-convex, but not necessarily convex. 
     In this work, we study the Voronoi diagram when $X$ is an algebraic variety and the metric is induced by a polyhedral norm. 

     A primary motivation for this setting comes from optimal transport. Consider the probability simplex of all distributions on $n$ states. A metric $d$ on the set of states induces a transportation distance on the simplex known as the Wasserstein distance $W_d(\mu,\nu)$. The corresponding unit ball is a centrally symmetric convex polytope known as the \textit{Lipschitz polytope}. This setting was studied in \cite{CELIK_Optimal_transport,CELIK_Wasserstein}, where it was shown that the Wasserstein distance to a variety is a piecewise polynomial function.

     Our approach follows \cite{BECEDAS2024102229, CIFUENTES2022351}. For a point $p\in X,$ \cite{BECEDAS2024102229} shows that its Voronoi cell is contained in a union of polyhedral cones, which we refer to as the \textit{Voronoi cone} of $p.$ We prove that the Voronoi cone is determined by the relative position of the normal space of $X$ at $p$ with respect to the inner normal fan of the unit ball. Equivalently, the Voronoi cone can be characterized by the intersection of the dual unit ball with the normal space at $p$ (\Cref{thm:generalresult}). Moreover, when $X$ has pure codimension one, this perspective allows us to stratify $X$ according to the dimension of the Voronoi cones of its points (\Cref{thm:stratgeneral}). 

     We also study the \textit{medial axis} of $X,$ the locus of points whose distance to $X$ is optimized by more than one point. 
     When $X$ consists of finitely many points, prior work examined the combinatorial structure of such bisectors \cite{Criado,JalJochemko}. We consider the case where $X$ has codimension one.
     In this case, the medial axis decomposes into components associated with pairs of faces of the unit ball. In the extreme cases (pairs consisting of two vertices, two facets, or a vertex and a facet) we obtain degree bounds for the algebraic closure of the corresponding component of the medial axis (\Cref{thm:vertex_vertex,thm:vertex-facet,thm:facet-facet}).
     
     The paper is organized as follows. In \Cref{sec:generalstratify}, we review the necessary background from polyhedral geometry, polyhedral norms, and differential topology. In \Cref{sec:VDbasic}, we focus on the relationship between the normal space of a point in $X$, the normal fan of the unit ball, and the Voronoi cone of $p$. In particular, we show that the Voronoi cell of a point depends on the geometry of $X$ with respect to the norm. In \Cref{sec:Codim1}, we show that for codimension-one varieties we can stratify $X$ based on the dimension of the Voronoi cones of points on $X$ and give a recipe to determine this stratification. Finally, in \Cref{sec:MA}, we give an algebraic description of the medial axis and establish degree bounds for its components.

\section{Preliminaries}\label{sec:generalstratify}

\subsection{Polyhedral Geometry}

To study the Voronoi diagram of a real algebraic variety $X$ associated to a polyhedral norm, we recall several definitions related to the geometry of convex polytopes, their duals, and the polyhedral cones associated to their faces. We define only the objects that will be used in our study and refer the reader to \cite{deloera2010,ziegler2012} for a detailed exposition. In what follows, we will denote by $\RR^{n*}$ the dual space of $\RR^n.$

\begin{definition}
	Let $P$ be a convex polytope in $\RR^n$ such that the origin is in its interior. 
	The \emph{dual polytope} to $P$, denoted by $P^*\subset \RR^{n*}$, is defined as
	\begin{align*}
		P^*:=\{y\in\RR^{n*}~|~x^Ty\leq 1~ \forall ~x \in P\}.
	\end{align*}
\end{definition}

Throughout this article, we  consider full-dimensional convex polytopes in the ambient space. 

	\begin{definition}
		Let $\{v_1,\ldots, v_k\} \subseteq \RR^n$ be a set of nonzero vectors. 
		The cones
		\begin{align*}
			C:=\left\{\sum_{i=1}^k\lambda_iv_i \mid (\lambda_1,\ldots,\lambda_k) \in \RR_{> 0}^k \right\} \qquad \text{ and } \qquad \overline{C}:=\left\{\sum_{i=1}^k\lambda_iv_i \mid (\lambda_1,\ldots,\lambda_k)\in\RR_{\geq 0}^k\setminus \{0\}\right\}
				\end{align*}
		are, respectively, the {\em open and closed} polyhedral cones generated by $\{v_1,\ldots, v_k\}$ in $\mathbb{R}^n \backslash \{0\}$.
	\end{definition}

	In the Euclidean topology, $\overline{C}$ is the closure of $C$ in $\RR^n\setminus \{0\}$. 
    We will work primarily with open cones. Thus, unless stated otherwise, ``cone" will refer to an open cone.
    We denote the set of (proper) faces of a polytope as $\F_P\coloneqq \{F~|~ F \text{ is a proper face of }P\}$. Note that $\emptyset, P \notin \F_P$. Furthermore, we define $\F_{P,i}\coloneqq \{F~|~ F \in \F_P \text{ and } \dim F= n-1-i\}$ for $i=0,\ldots,n-1$. When the polytope is clear from the context, we write simply $\F$ (resp. $\F_{i}$) instead of $\F_P$ (resp. $\F_{P,i}$). 
	A hyperplane $H$ is a supporting hyperplane of a polytope $P$ at $F$ if 
	$H\cap P=F$ for some $F\in \mathcal{F}$ and $P\subset H^{+}$ where $H^+\coloneqq \{x \in\RR^n \mid H(x)\geq 0 \}.$ 
	Given a polytope, we associate to it a polyhedral fan via its faces.	
	\begin{definition}
		Let $P$ be a full-dimensional convex polytope in $\RR^n$ and $F$ one of its faces. The \emph{inner normal (open) cone}, $C(P,F)$, at $F$ is the open cone containing all of the inner pointing normal vectors of all of the supporting hyperplanes $H$ of $P$ 
			such that $P\cap H= F$. In particular, this cone is generated by the normal vectors of the supporting hyperplanes of all of the facets containing $F.$
	\end{definition}

		Whenever the polytope $P$ is clear from context, we write simply $C_F$ for the cone $C(P,F)$ and denote ${\rm gens}(C_F)$ as the minimal generators of this cone. Additionally, for $v\in\RR^n$, we denote the translate of $C(P,F)$ to $v$ by $$C_v(P,F):=v+C(P,F).$$
	
	\begin{remark}\label{rem:coneclosure}
		From the above definitions, it is easy to check that for any $F\in \F$, $\overline{C_F}=\bigsqcup_{F\subseteq F'}C_{F'}.$
	\end{remark}
	
	The union of the inner normal cones of all of the faces of a polytope $P$ is called the \emph{inner normal fan} of $P$; it is denoted by $\Sigma_P$ and it partitions $\RR^n\setminus \{0\}$ into polyhedral regions indexed by the faces of $P$.

	\begin{remark}\label{rem:normalcone}
		Consider a polytope $P$ in $\RR^n$ and its dual $P^*$. Let $F$ be a face of $P$ and $v$ be a vector in $C(P,F)$. We can consider $v$ as a linear functional over $\Rd$. 
        From the definition of duality, it is straightforward to check that $v$ attains its minimum over $P^*$ precisely on the face $F^* \subseteq P^*$ dual to $F$.
	\end{remark}

		\begin{example}\label{ex:plane_curve}
			Let $b_1:=(-1,1)$, $b_2:=(1,1)$, $b_3:=(1,-1)$, and $b_4:=(-1,-1)$. Consider the polytope $P= \Conv\{b_1,b_2,b_3,b_4\}.$ This polytope is a square in $\RR^2$ and it is depicted in~\Cref{fig:square-diamond-fans}.  The four vertices of $P$ have two-dimensional inner normal cones. These are exactly the four blue orthants of the plane excluding the points on the corresponding axes. The four one-dimensional cones, corresponding to the four edges, are the rays starting at the origin in the direction of the coordinate axes.  
			The dual polytope $P^*=\Conv\{(0,1),(0,-1),(1,0),(0,-1)\}$ is a diamond. The one-dimensional cones
            of $P^*$
            associated to the facets are the rays starting in the origin and in the direction of the vectors $\{\pm(1,1), \pm(-1,1)\}$. The two-dimensional cones associated to the vertices are depicted in orange in~\Cref{fig:square-diamond-fans}.

			\begin{figure}[h]
            \centering
	\begin{subfigure}[t]{0.2\textwidth}
		\centering 
		\includegraphics[scale=0.3]{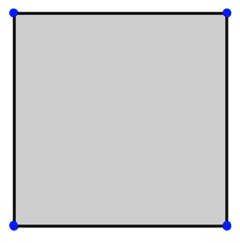}
	\end{subfigure}
 \begin{subfigure}[t]{0.2\textwidth}
		\centering 
		\includegraphics[scale=0.3]{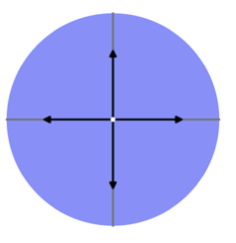}
	\end{subfigure}
    \hspace{0.6cm}
	\begin{subfigure}[t]{0.2\textwidth}
		\centering
		\includegraphics[scale=0.3]{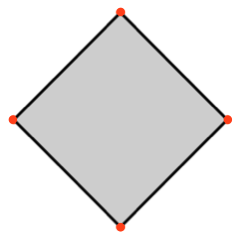}
	\end{subfigure}
     \begin{subfigure}[t]{0.2\textwidth}
		\centering
		\includegraphics[scale=0.3]{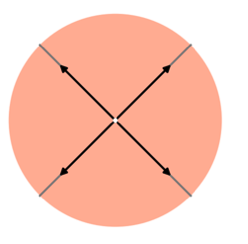}
	\end{subfigure}
				\caption{From left to right, we have $P$ and its inner normal fan, and $P^*$ and its inner normal fan.}
				\label{fig:square-diamond-fans}
			\end{figure}
		\end{example}

		\begin{example}\label{ex:hypersurface}
			Consider the three dimensional cube 
			\[
			P= \Conv\{(1,1,1),(1,-1,1),(-1,-1,1),(-1,1,1),(1,1,-1),\\(1,-1,-1),(-1,-1,-1),(-1,1,-1)\}.
			\]
            The inner normal fan consists of $8$ three-dimensional cones (one for each vertex), which coincide with the open orthants; $12$ two-dimensional cones (one for each edge), lying in the coordinate planes; and $6$ one-dimensional cones (one for each facet), given by the rays from the origin along the coordinate axes.
            The dual ball $P^*$ is the octahedron. The polyhedral ball $P$, its dual $P^*$, and its associated inner normal fan are depicted in \Cref{fig:cube-octa-fans}.
			\begin{figure}[h]
				\centering
				\includegraphics[scale=0.5]{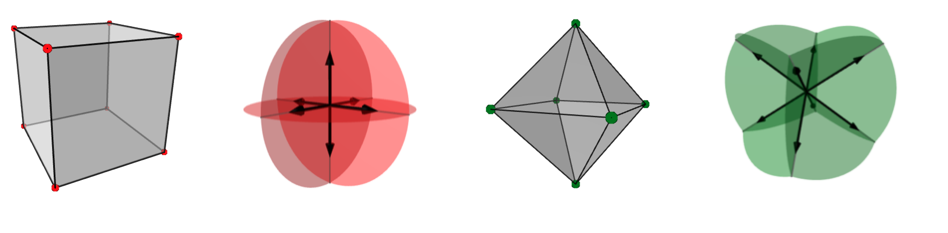}
				\caption{From left to right, we have $P$, the inner normal fan of $P$, $P^*$, and the inner normal fan of $P^*$.}
				\label{fig:cube-octa-fans}
			\end{figure}

		\end{example}

\subsection{Polyhedral Norms}

Let $\L$ be a finite set of linear functionals on $\RR^n$ such that the set $P = \{x\in \RR^n~|~ \ell(x)\leq 1 \text{ for } \ell\in \mathcal{L}\}$ is bounded and describes a full-dimensional convex polytope. Furthermore, we impose that $\mathcal{L}=-\mathcal{L}$
so the polytope $P$ is centrally symmetric around the origin, i.e. $P=-P$. For the entirety of this article, we will assume that $\mathcal{L}=-\mathcal{L}$ and $P=-P.$

	\begin{definition}
		For a finite collection of linear functionals $\L$, a \emph{polyhedral norm} is a function $h_{\L}$ on $\RR^n$ given by 
        $$h_{\L}(x):=\max_{\ell \in \mathcal{L}} \ \ell(x).$$
	\end{definition}

It is a straightforward exercise to verify that polyhedral norms satisfy the norm axioms.
Under this norm, the distance $d(x,y)$ between two points $x,y\in\RR^n$ is $d(x,y)\coloneqq h_\L(x-y)$.
Furthermore, given a set $X\subset \RR^n$, we define the \emph{polyhedral distance} of a point $y\in \RR^n$ to $X$ as 
\begin{equation*}
d(y,X):=\min_{x\in \overline{X}} \ h_\L(y-x),
\end{equation*}
where $\overline{X}$ is the Euclidean closure of $X \subseteq \RR^n$.

\begin{definition}
	Given a polyhedral norm $h_{\L}$, the \textit{ball} of radius $r$ at an arbitrary point $y \in \RR^n$ is \[B_r(y):=\{x\in \RR^n~|~h_\L(x-y)\leq r\}.\] For convenience, we write the unit ball at $y$ as $B(y)$ and simply as $B$ when considering the unit ball around the origin. 
\end{definition}

For a given polyhedral norm, $B$ is a centrally symmetric convex polytope. 
In the next result, we show that there is a polyhedral norm associated with every full-dimensional centrally symmetric convex polytope.

\begin{lemma}
	Let $P$ be a full-dimensional centrally symmetric convex  polytope in $\RR^n$ that contains the origin. Then there is a polyhedral norm $h_\L$ such that $P$ is the unit ball for $h_\L.$
\end{lemma}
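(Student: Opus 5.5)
The plan is to read off $\L$ from a facet description of $P$ and then check that the resulting function has $P$ as its unit ball and satisfies the standing hypotheses on $\L$. Since $P$ is full-dimensional, it has a finite irredundant facet (H-)representation $P=\{x\in\RR^n \mid a_i^T x\le b_i,\ i=1,\dots,m\}$, one inequality per facet.

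\textbf{Step 1: the origin is interior, so the $b_i$ can be normalized.} Because $P=-P$ and $P$ is full-dimensional, the origin is an interior point. Indeed, if $x\in \operatorname{int} P$, then $-x\in\operatorname{int}P$ as well, and by convexity $0=\tfrac12 x+\tfrac12(-x)$ lies in the interior. Hence every $b_i>0$, since $0$ satisfies each inequality strictly. Dividing by $b_i$, we get
\[
P=\{x\in\RR^n \mid \ell_i(x)\le 1,\ i=1,\dots,m\}, \qquad \ell_i(x):=a_i^Tx/b_i .
\]
Set $\L=\{\ell_1,\dots,\ell_m\}$.

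\textbf{Step 2: $\L=-\L$.} If $F$ is the facet defined by $\ell_i$, then $-F$ is a facet of $-P=P$, and it is defined by $-\ell_i(x)\le 1$. For an irredundant representation normalized so that each right-hand side equals $1$, the facet-defining functional is unique. So $-\ell_i\in\L$. If one prefers to avoid the uniqueness argument, one can simply enlarge $\L$ to $\L\cup(-\L)$. The inequalities $-\ell_i(x)\le1$ are valid on $P$ because $x\in P$ implies $-x\in P$, so adding them does not change the set.

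\textbf{Step 3: the unit ball of $h_\L$ is $P$.} We have $h_\L(x)\le 1$ if and only if $\ell(x)\le 1$ for all $\ell\in\L$, which holds if and only if $x\in P$. For completeness, and as the paper notes, $h_\L$ is a norm:
\begin{itemize}
\item Positive homogeneity for $t\ge0$ is clear, and $h_\L(-x)=h_\L(x)$ because $\L=-\L$.
\item Subadditivity follows from taking a maximum of linear functions.
\item $h_\L(x)=\max_{\ell\in\L}|\ell(x)|\ge0$, and if $h_\L(x)=0$ then $tx\in P$ for all $t>0$. Boundedness of $P$ then forces $x=0$.
\end{itemize}

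The only mildly delicate point is Step 1, namely ensuring that $0$ is interior so that the normalization $b_i=1$ is legitimate. Central symmetry together with full-dimensionality settles it.
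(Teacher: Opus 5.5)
Your proof is correct and follows the same route as the paper: you take the irredundant facet description of $P$, use central symmetry to see that the set of facet functionals is closed under negation, and read off $P$ as the unit ball of $h_\L$. Your per-facet normalization of each right-hand side to $1$, justified by showing that the origin is interior, makes precise what the paper compresses into ``up to scaling,'' and your check of the norm axioms fills in what the paper leaves as an exercise.
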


\begin{proof}
    We consider the set of linear functionals $\L$ that give supporting hyperplanes for the facets of $P$. This gives the minimal $\mathcal{H}-$representation of $P$. Since $0\in P$ and $P$ is centrally symmetric, the set $\L$ is of the form $\{a_1,\ldots, a_m,-a_1,\ldots,-a_m\}$ with $a_i\in\RR^{n*}$. By construction, $P$ is the unit ball of the polyhedral norm $h_\L$ up to scaling. 
\end{proof}

\begin{example_contd}[\ref{ex:plane_curve}]
	The square is the unit ball for the polyhedral norm defined by the set of functionals $\left\{\pm x, \pm y\right\}.$
	The dual polytope $P^{\ast}$ is the unit ball for the dual norm and it is defined by the set of functionals
	$ \left\{ \pm(x+y), \pm( -x+y)\right\}.$
\end{example_contd}

\begin{example_contd}[\ref{ex:hypersurface}]
	Similar to the previous example, the cube is the unit ball for the norm defined by the set of functionals
	$\left\{\pm x, \pm y,\pm z\right\}.$
	The unit ball for the dual norm is the octahedron and
	it is defined by the functionals 
	$
	\left\{\pm(x+y+z), \pm( x-y+z), \pm(x+y-z),\pm(x-y-z) \right\}.
$
\end{example_contd}

\subsection{Differential Topology} \label{sec:top}

Here, we recall some results and definitions from differential topology that we will use later to stratify a codimension-one manifold using polyhedral norms. For a basic introduction to topological and smooth manifolds, we refer readers to \cite{lee2,lee1}. 
Going forward, we consider a manifold $X \subset \RR^n$ with the Euclidean topology inherited from $\RR^n$.  Below, we give definitions of locally closed subspaces and stratifications of $X$.

\begin{definition}
	A subset $U$ of a topological space $X$ is \textit{locally closed} if $\overline{U}\setminus U$ is closed in $X$. Equivalently, $U$ is locally closed if it is the set difference of two open sets in $X$.
\end{definition}

\begin{definition}\label{def:Strat_top_space}
    Let $X$ be a manifold and $I$ be an index set with a partial ordering. The collection $\{X_i\}_{i\in I}$ is a \textit{stratification} of $X$ if the following conditions are satisfied:
    \begin{enumerate}[label=(\roman*)]
        \item \label{def212:it1} $X=\bigcup_{i\in I}X_i$,
        \item \label{def212:it2}  $X_i$ is a submanifold for all $i\in I$,
        \item \label{def212:it3}  $X_i$ is locally closed for all $i\in I$,
        \item \label{def212:it4}  $X_i\cap X_j =\emptyset $ if $i\neq j$,
        \item \label{def212:it5}  if $X_i\cap \overline{X_j}\neq\emptyset$, then $X_i\subset \overline{X_j}$.  Equivalently, if $X_i\subset \overline{X_j}$, then $i\leq j$.
    \end{enumerate}
\end{definition}

			A family $\{X_i\}_{i\in I}$ of subsets of a topological space $X$ is \textit{locally finite} if, for each $x\in X$, there is a neighborhood $U$ of $x$ such that $U\cap X_i=\emptyset$ for all but finitely many indices $i\in I.$ If $I$ is a finite set, that is, $\{X_i\}_{i\in I}$ is a finite family, then it is trivially locally finite. This will always be the case later on.

We now recall some basic results and definitions that will be useful when constructing the stratification. 

\begin{lemma}[\cite{bourbaki} Proposition 2, Ch. 5.1]\label{lem:restr_proj}
	Let $f: X\longrightarrow Y$ be a map between two topological spaces $X$ and $Y,$ and fix a subset $T\subset Y$. If $f$ is an open map (resp. closed, continuous), then so is the restriction $f|_{f^{-1}(T)}:f^{-1}(T)\longrightarrow T$. 
\end{lemma}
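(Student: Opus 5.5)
The plan is to check the three properties one at a time, working directly from the definitions of the subspace topology on $f^{-1}(T)$ and on $T$. Write $S = f^{-1}(T)$ and $g = f|_{S}\colon S \to T$. The one observation used throughout is that $g$ maps $S$ into $T$, and that for every subset $A \subseteq Y$ we have
\[
g^{-1}(A\cap T) = f^{-1}(A)\cap S,
\]
and, because $S = f^{-1}(T)$ is saturated,
\[
f(B\cap S) = f(B)\cap T \quad \text{for every } B\subseteq X.
\]
To verify the second identity: if $y=f(b)$ with $b\in B$ and $y\in T$, then $b\in f^{-1}(T)=S$. Conversely, $f(B\cap S)\subseteq f(B)\cap T$ is immediate.

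\textbf{Continuity.} Every open set of $T$ has the form $U\cap T$ with $U$ open in $Y$. By the first identity its preimage under $g$ is $f^{-1}(U)\cap S$. This set is open in $S$, since $f^{-1}(U)$ is open in $X$ by continuity of $f$.

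\textbf{Openness.} Every open set of $S$ has the form $V\cap S$ with $V$ open in $X$. By the saturation identity, $g(V\cap S) = f(V)\cap T$. This is open in $T$ because $f$ is an open map, so $f(V)$ is open in $Y$.

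\textbf{Closedness.} The argument is the same with $V$ closed: $g(V\cap S)=f(V)\cap T$ is closed in $T$, since $f(V)$ is closed in $Y$.

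The only step that requires any care is the saturation identity. It is also the place where the hypothesis that the domain is exactly $f^{-1}(T)$ is needed: for an arbitrary subset of $X$, the restriction of an open map need not be open. Once that identity is in hand, the three properties follow immediately from the definition of the subspace topology.
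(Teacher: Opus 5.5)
Your proof is correct and is the standard argument behind the cited result of Bourbaki; the paper gives no proof of its own, only the citation. Writing every open (resp. closed) subset of $f^{-1}(T)$ as $A\cap f^{-1}(T)$ with $A$ open (resp. closed) in $X$, the saturation identity $f(A\cap f^{-1}(T))=f(A)\cap T$ settles openness and closedness, and $g^{-1}(U\cap T)=f^{-1}(U)\cap f^{-1}(T)$ settles continuity.
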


Let $X$ and $Y$ be smooth manifolds of dimensions $m$ and $n$, respectively. A map $f:X\longrightarrow Y$ is an \textit{immersion at a point} $x\in X$ if $df_x:T_xX\longrightarrow T_{f(x)}Y$ is injective. If $f$ is an immersion at $x$ for all $x\in X$, then $f$ is called an \textit{immersion}. Furthermore, $f$ is a \textit{smooth embedding} of $X$ into $Y$ if it is a smooth immersion and a topological embedding, i.e., a homeomorphism onto its image $f(X)\subseteq Y$ in the subspace topology. A subset $S$ is an \textit{embedded submanifold} of $X$ in the subspace topology if it is endowed with a smooth structure with respect to which $S\hookrightarrow X$ is a smooth embedding.

Let $f:X\to Y$ be a smooth map and $S\subseteq Y$ be an embedded submanifold of $Y$. We say that $f$ is \textit{transverse} to $S$ if for every $x\in f^{-1}(S)$, the tangent spaces $T_{f(x)}S$ and $d_{f(x)}T_xX$ together span $T_{f(x)}Y$. In other words,
 \[T_{f(x)}S+d_{f(x)}T_xX=T_{f(x)}Y.\]
 Finally, we will use the following generalized preimage theorem whose proof can be found in \cite{lee1}.
 
 \begin{theorem}[The Preimage Theorem]\label{thm:preimage_thm} Suppose $X$ and $Y$ are smooth manifolds and $S\subseteq Y$ is an embedded submanifold.
 If $f:X\to Y$ is a smooth map that is transverse to $S$, then $f^{-1}(S)$ is an embedded submanifold of $X$ whose codimension is equal to the codimension of $S$ in $Y$.
 \end{theorem}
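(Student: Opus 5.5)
The plan is to reduce to the ordinary regular value theorem by straightening $S$ out in local coordinates, and then to patch the local pieces together. The claim is local: a subset $Z\subseteq X$ is an embedded submanifold of codimension $k$ precisely when every $z\in Z$ has a neighborhood $U$ in $X$ with $Z\cap U$ an embedded codimension-$k$ submanifold of $U$. So I fix $x\in f^{-1}(S)$, set $y=f(x)$, and let $k=\dim Y-\dim S$ be the codimension of $S$ in $Y$.

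First, because $S$ is an embedded submanifold, the local slice criterion gives a chart $(V,\varphi)$ of $Y$ around $y$ in which $S\cap V$ is a coordinate slice. Concretely, there is a smooth submersion $\Phi\colon V\to\RR^k$ (the last $k$ coordinates of $\varphi$) such that
\[ S\cap V=\Phi^{-1}(0) \quad\text{and}\quad \ker d\Phi_{y'}=T_{y'}S \text{ for all } y'\in S\cap V. \]
Next, set $U=f^{-1}(V)$, which is open in $X$ since $f$ is continuous, and define $g=\Phi\circ f\colon U\to\RR^k$. Then
\[ f^{-1}(S)\cap U=g^{-1}(0). \]

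The key step is to check that $0$ is a regular value of $g$. Take $x'\in g^{-1}(0)$ and write $y'=f(x')$. Transversality gives $T_{y'}Y=T_{y'}S+df_{x'}(T_{x'}X)$. Applying $d\Phi_{y'}$, which is surjective and kills $T_{y'}S$, we get
\[ \RR^k=d\Phi_{y'}\bigl(df_{x'}(T_{x'}X)\bigr)=dg_{x'}(T_{x'}X). \]
Hence $dg_{x'}$ is surjective. By the regular level set theorem, $g^{-1}(0)$ is a properly embedded submanifold of $U$ of codimension $k$. Since $U$ is open in $X$, this makes it an embedded codimension-$k$ submanifold of $X$ near $x$.

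Finally, I would glue these local statements together. The local slice charts obtained around different points are compatible, because each is a slice chart for the same subset $f^{-1}(S)$ with the subspace topology; the standard slice-chart characterization of embedded submanifolds then gives a global smooth structure on $f^{-1}(S)$. I expect the main point needing care to be the existence of the local defining submersion $\Phi$, which is exactly the slice-chart theorem for embedded submanifolds; I would cite \cite{lee1} for it. If $f^{-1}(S)$ is empty, the statement holds vacuously.
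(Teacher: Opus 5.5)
Your argument is correct. You take a local defining submersion $\Phi$ for $S$, use transversality together with $\ker d\Phi_{y'}=T_{y'}S$ to show that $0$ is a regular value of $\Phi\circ f$, apply the regular level set theorem, and conclude by locality of embedded submanifolds; this is exactly how the result is proved in \cite{lee1}, which the paper cites rather than giving a proof of its own.
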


\section{Voronoi Diagrams in Polyhedral Norms}\label{sec:VDbasic}

In this section, we study the Voronoi diagrams of algebraic varieties with respect to polyhedral norms. We state our main definitions and prove results regarding Voronoi cells of smooth points of algebraic varieties. In what follows, we assume that $B\subset \RR^n$ is a centrally symmetric convex polytope that corresponds to the unit ball of a polyhedral norm on $\RR^n.$

	\begin{definition}
		Let $X\subset \RR^n$ be a variety and let $v\in X$. Given a polyhedral norm $h_\L$, the \emph{Voronoi cell} of $v$ with respect to $h_\L$ is:
		\[\vor_X(v):=\{y\in \RR^n~|~d(y,X)=h_\L(y-v)\}.\]
	\end{definition}

Given a smooth point $v$ on a variety $X$, we define the $\type$ of $v$ as the collection of faces of the unit ball $B$ that have a supporting hyperplane whose normal vector is contained in the normal space of $v$, $N_v.$

\begin{definition}\label{def:type}
	Let $X$ be a variety and $N_v$ 
	be the normal space of $X$ at a smooth point $v\in X.$ We define the \emph{type} of $v$ as:
	$$ \type(v):=\left\{F\in\F(B)~|~C(P,F)\cap N_v
	\neq \emptyset \right\}.$$
\end{definition}

We point out that the definition above differs from the definition of type in~\cite[Proposition 4]{CELIK_Wasserstein}. In our case, the type of a point does not depend on a second point. It depends only on the local geometry of the variety and the polyhedral ball. In \Cref{def:optimizing_face}, 
we will refine the definition of type by defining an \emph{optimizing face} which is closer to the definition of type from \cite{CELIK_Wasserstein}. 

\begin{lemma}
	Given a ball $B$ as defined above and a smooth point $v\in X$, if $F\in \type(v)$, then $-F\in \type(v)$. 
\end{lemma}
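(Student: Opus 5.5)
The plan combines two symmetries: the central symmetry of the ball $B$ and the fact that the normal space $N_v$ is a linear subspace. Suppose $F\in\type(v)$, so by Definition~\ref{def:type} there is a vector $u\in C(B,F)\cap N_v$. I will show that $-u\in C(B,-F)\cap N_v$, which gives $-F\in\type(v)$.

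First, $-F$ is a proper face of $B$. Since $B=-B$, the map $x\mapsto -x$ is a linear automorphism of $B$, so it sends faces to faces and preserves dimension and properness. Hence $-F\in\F(B)$.

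Second, $C(B,-F)=-C(B,F)$. Take $u\in C(B,F)$. Then $u$ is an inner normal of a supporting hyperplane $H$ with $H\cap B=F$ and $B\subset H^+$; concretely, $H=\{x\mid u^Tx=c\}$ and $u^Tx\ge c$ on $B$ with equality exactly on $F$. Consider the reflected hyperplane $-H=\{x\mid (-u)^Tx=c\}$. For $x\in B$ we have $-x\in B$, so $(-u)^Tx=u^T(-x)\ge c$, with equality iff $-x\in F$, i.e.\ iff $x\in -F$. Thus $-H$ supports $B$ exactly at $-F$ with inner normal $-u$, so $-u\in C(B,-F)$. Equivalently, the generators of $C(B,-F)$, which are the inner normals of the facets containing $-F$, are the negatives of the generators of $C(B,F)$, because the facets containing $-F$ are the negatives of those containing $F$. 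Since the open cone is $\RR_{>0}$-spanned by these generators, $C(B,-F)=-C(B,F)$.

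Third, $N_v$ is a linear subspace of $\RR^n$, so $u\in N_v$ implies $-u\in N_v$. Therefore $-u\in C(B,-F)\cap N_v\neq\emptyset$, and $-F\in\type(v)$.

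None of these steps is hard. The only point needing care is the sign convention for inner normals and supporting hyperplanes, namely checking that $H^+$ is carried to $(-H)^+$ under negation. I would handle this with the explicit inequality above rather than with the generator description.
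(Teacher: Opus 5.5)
Your proof is correct and follows the same route as the paper. The paper simply asserts that central symmetry of $B$ gives $N_v\cap C(B,F)\neq\emptyset$ if and only if $N_v\cap C(B,-F)\neq\emptyset$, and you make explicit the two facts this one-liner relies on: $C(B,-F)=-C(B,F)$ and $N_v=-N_v$.
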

\begin{proof}
    Since $B$ is centrally symmetric, it holds that $N_v\cap C(P,F)\neq\emptyset$ if and only if $N_v\cap  C(P,-F)\neq \emptyset$. 
\end{proof}

The next proposition describes the Voronoi cell of a point based on its type. Specifically, we find a union of cones containing the Voronoi cell of a smooth point in a variety. We recall that given a face $F$ of a polyhedral ball $B$, we let $\overline{C(B,F)}$ denote the Euclidean closure of $C(B,F)$ in $\RR^n\setminus \{0\}.$

\begin{proposition}\label{prop:coneandtype}
	Let $X$ be a real algebraic variety and $v\in X$ a smooth point. Then the Voronoi cell 
    $$\vor_X(v) \subset \bigcup_{F\in\type(v)}\overline{C_v(B^*,F^*)}.$$
\end{proposition}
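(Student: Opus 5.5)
Take $y\in\vor_X(v)$ with $y\neq v$ (the point $v$ itself is excluded, since the cones live in $\RR^n\setminus\{0\}$ after translating to $v$), and set $r=h_\L(y-v)=d(y,X)>0$. We want a face $F\in\type(v)$ with $y-v\in\overline{C(B^*,F^*)}$. The plan is to read off, from local minimality at $v$, a first-order (KKT/subgradient) condition for the polyhedral norm, and to translate it through the duality of \Cref{rem:normalcone}.

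\emph{Step 1: optimality condition.} The function $g(x)=h_\L(y-x)$ restricted to $X$ attains its minimum at $v$. Near the smooth point $v$, take a local parametrization $\gamma$ of $X$ with $\gamma(0)=v$ and tangent vectors spanning $T_vX$. The convex function $h_\L$ has directional derivatives. For every $t\in T_vX$ we therefore need $h_\L'(y-v;-t)\ge 0$, where the directional derivative is $\max_{\ell\in\mathcal{L}_{\mathrm{act}}}\ell(-t)$ and $\mathcal{L}_{\mathrm{act}}$ is the set of functionals with $\ell(y-v)=r$. Reaching this inequality uses the Lipschitz property of $h_\L$ to replace $\gamma(s)$ by $v+st$ up to $o(s)$. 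So no tangent direction makes all active functionals negative. By Farkas' lemma (Gordan's alternative), applied to the active functionals restricted to $T_vX$, some convex combination $w=\sum \lambda_\ell \ell$ of active functionals vanishes on $T_vX$; that is, $w\in N_v$, where we identify $N_v$ with the annihilator of $T_vX$. I expect this to be the main obstacle. One must check carefully that a genuine minimum on a curved manifold still gives the linear alternative, and that the convex combination is nonzero. Nonzeroness holds because $w(y-v)=r>0$.

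\emph{Step 2: identify the face.} The active functionals $\ell$ are exactly the facet normals (vertices of $B^*$) that are tight at the boundary point $(y-v)/r$ of $B$. Let $G$ be the smallest face of $B$ containing $(y-v)/r$. The convex hull of the active $\ell$ is then the dual face $G^*\subset B^*$, and a nonnegative nonzero combination of them lies in $\overline{C(B,G)}$. Because $w$ is a strictly positive combination of some subset of these generators, $w$ lies in an open cone $C(B,F)$ for some face $F\supseteq G$, by \Cref{rem:coneclosure}. Thus $C(B,F)\cap N_v\ni w$, and so $F\in\type(v)$.

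\emph{Step 3: conclude.} From $F\supseteq G$ we get $F^*\subseteq G^*$. The point $(y-v)/r$ lies in the relative interior of $G$, which is the open cone $C(B^*,G^*)$ up to positive scaling, since the inner normal cone of $B^*$ at $G^*$ is generated by the vertices of $G$ (possibly up to sign conventions, which the central symmetry $B=-B$ absorbs). Since $F^*\subseteq G^*$, \Cref{rem:coneclosure} applied to $B^*$ gives $C(B^*,G^*)\subseteq\overline{C(B^*,F^*)}$. Hence $y\in\overline{C_v(B^*,F^*)}$ with $F\in\type(v)$. The sign-convention check (inner versus outer normals, $F$ versus $-F$) is resolved using the preceding lemma that $\type(v)$ is closed under $F\mapsto -F$.
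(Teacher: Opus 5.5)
Your proof is correct and follows the same route as the paper. The paper argues by contradiction: since $B_\lambda(u)$ touches $X$ at $v$ in a face $T$, the tangent space lies in a supporting hyperplane at $T$ whose normal is in $N_v$, which forces a face containing $T$ into $\type(v)$; your directional-derivative and Gordan argument supplies the first-order step that the paper only asserts. The sign slips you anticipated are real but harmless: the active functionals are outer normals at $G$, so $w\in\overline{C(B,-G)}$, and $C(B^*,G^*)=-\RR_{>0}\,\mathrm{relint}(G)$, so $y-v\in C(B^*,(-G)^*)$; the two flips cancel, and in any case they are absorbed because $\type(v)=-\type(v)$ makes the target union centrally symmetric.
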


\begin{proof}
	Let $u \in \RR^n\setminus X$ be a point in the Voronoi cell of $v \in X.$ We claim $u-v\in \overline{C(B^*,F^*)}$ for some $F \in \type(v)$. Let us assume this is not true so that $u-v\in C(B^*,T^*)$ for some face $T$ 
	such that $T\not\subseteq F$ for all $F\in \type(v).$ Since $u-v$ is minimized over the face $T$, we have that $B_{\lambda}(u)$ intersects $X$ at the point $v$ which lies in the face $T$ where $\lambda:=d(u,X)$. Hence, the tangent space at $v$ is a supporting hyperplane of $T$. Equivalently, there exists a vector $n_v\in N_v$ such that a hyperplane with normal vector $n_v$ is a supporting hyperplane of the face $T.$ This gives a contradiction.
\end{proof}

	\begin{remark}
		In the Euclidean norm, the Voronoi cell of a smooth point $v$ of an algebraic variety $X$ is contained in the normal space of $v.$ Our result above gives an analogous result for polyhedral norms. The faces of $B$ in $\type(v)$ are the faces of the unit ball that intersect the normal space of $v$ and the Voronoi cell of $v$ is contained in the normal cones of the dual faces corresponding to the faces in $\type(v)$. We call $$\bigcup_{F\in\type(v)}\overline{C_v(B^*,F^*)}$$
        the \textit{Voronoi cone} of $v$ in $X$ and denote it by $\vc(v)$. 
	\end{remark}

Based on \Cref{def:type} and \Cref{prop:coneandtype}, the following theorem gives a way to find all faces in $\type(v)$ where $v$ is a smooth point on a variety of codimension $c<n$.
The smoothness condition 
guarantees that the normal space at the point is of the expected dimension.

	\begin{theorem}\label{thm:generalresult}
		Let $X$ be an algebraic variety of codimension $c<n$, and $N_v$ be the normal space at a smooth point $v\in X$. Then 
		\[\type(v)=\{F\in\F(B) \mid N_v\cap F^*\neq \emptyset \}.\]
		
			\end{theorem}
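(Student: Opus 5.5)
The plan is to reduce the statement to a single geometric identity between the inner normal cone $C(B,F)$ and the dual face $F^*$. Once we know that $C(B,F)$ is exactly the set of positive multiples of (relative interior) points of $F^*$, the theorem follows because $N_v$ is a linear subspace and so is closed under positive scaling.

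\textbf{Step 1: $C(B,F)$ as a cone over $F^*$.} Write $B=\{x \mid \ell(x)\le 1,\ \ell\in\L\}$ with $\L$ minimal, so $\L$ is exactly the vertex set of $B^*$. Each $\ell\in\L$ is an outer normal of the facet $\{\ell=1\}$, and $-\ell$ is the corresponding inner normal. Since $\L=-\L$, the inner normal of the facet defined by $\ell$ is the vertex $-\ell$ of $B^*$, which is the vertex dual to the opposite facet. I will keep track of this sign explicitly, and use $B=-B$ together with the lemma $F\in\type(v)\Leftrightarrow -F\in\type(v)$ to absorb it. Concretely:
\begin{itemize}
\item the definition gives $C(B,F)$ as the open cone generated by the normals of the facets containing $F$;
\item these normals are (up to the global sign) the vertices of $B^*$ lying on $F^*$;
\item since $F^*$ is the convex hull of exactly those vertices, the open cone they generate is $\RR_{>0}\cdot \mathrm{relint}(F^*)$.
\end{itemize}
Here I use that $0$ lies in the interior of $B^*$, so $0\notin\operatorname{aff}(F^*)$ and positive combinations can be renormalized to convex combinations with all weights positive. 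Remark~\ref{rem:normalcone} supplies the duality bookkeeping, namely that vectors of $C(B,F)$ are minimized over $B$ exactly on $F$.

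\textbf{Step 2: use linearity of $N_v$.} Because $v$ is smooth and $c<n$, $N_v$ is a linear subspace of dimension $c\ge 1$; note that $c<n$ keeps the tangent space nonzero, so $N_v$ is a proper subspace. For a linear subspace, $N_v\cap \RR_{>0}\,S\ne\emptyset$ if and only if $N_v\cap S\ne\emptyset$, for any set $S$. With $S=\mathrm{relint}(F^*)$ and Step~1, this gives $C(B,F)\cap N_v\neq\emptyset$ if and only if $N_v\cap\mathrm{relint}(F^*)\ne\emptyset$. The sign convention is harmless here, since $N_v=-N_v$.

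\textbf{Main obstacle: relative interior versus closed face.} The delicate point is whether $F^*$ in the statement means the closed face or its relative interior. If $N_v$ meets $F^*$ only at a boundary point, that point lies in $\mathrm{relint}(G^*)$ for some smaller face $G^*\subsetneq F^*$, that is, for some $G\supsetneq F$. This yields $G\in\type(v)$ but not necessarily $F\in\type(v)$. For example, take the square, with $N_v$ the $x$-axis and $F$ a vertex: then $N_v$ meets the diamond edge $F^*$ at the vertex $(1,0)$, yet $N_v$ misses the open orthant $C(B,F)$.

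I would therefore prove the statement with $F^*$ read as the relatively open dual face, which matches the paper's convention that cones are open. I would add a remark that with the closed face one obtains instead the set of faces contained in some member of $\type(v)$. By Remark~\ref{rem:coneclosure}, this is the collection relevant for the closure $\vc(v)$.
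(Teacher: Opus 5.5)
Your argument is correct and follows essentially the paper's route. Both proofs identify the inner normal cone $C(B,F)$ with minus the open cone over the dual face $F^*$, and absorb the sign and the positive rescaling because $N_v$ is a linear subspace; your Step~1, via facet normals being the vertices of $F^*$, justifies this more cleanly than the paper's appeal to Remark~\ref{rem:normalcone}.

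Your relative-interior caveat is well founded. The paper's converse step (``$\lambda w\in F^*$ implies $-w\in C(B,F)$'') silently requires $\lambda w\in\mathrm{relint}(F^*)$, and your square example shows that with closed dual faces the right-hand side equals $\overline{\type(v)}$ rather than $\type(v)$.
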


 \begin{proof}
		We want to prove the equality of the two sets above for a smooth point $v\in X$. 
		Assume that $F\in \type(v)$. By definition, $N_v\cap C(P,F)\neq \emptyset$, so there is a vector $w\in N_v$ that is minimized over the face $F^*$ of $B^*$. Therefore, there exists $\lambda\in \RR$ such that $\lambda w\in F^*$ for $F\in\type(v)$ and hence, $\lambda w\in N_v\cap F^*$. 
		
		On the other hand, let $w\in N_v$ such that there exists $\lambda>0$ for which $\lambda w\in F^*$ for some face $F^*$ of $B^*.$ This implies that $-w$ is minimized over $F^*$ and hence, $-w\in C(B,F)$. Consequently, $N_v\cap C(B,F)\neq \emptyset$ and $F\in \type(v).$
	\end{proof}

\begin{figure}
	\centering
	\begin{subfigure}[b]{0.4\textwidth}
		\includegraphics[scale=0.23]{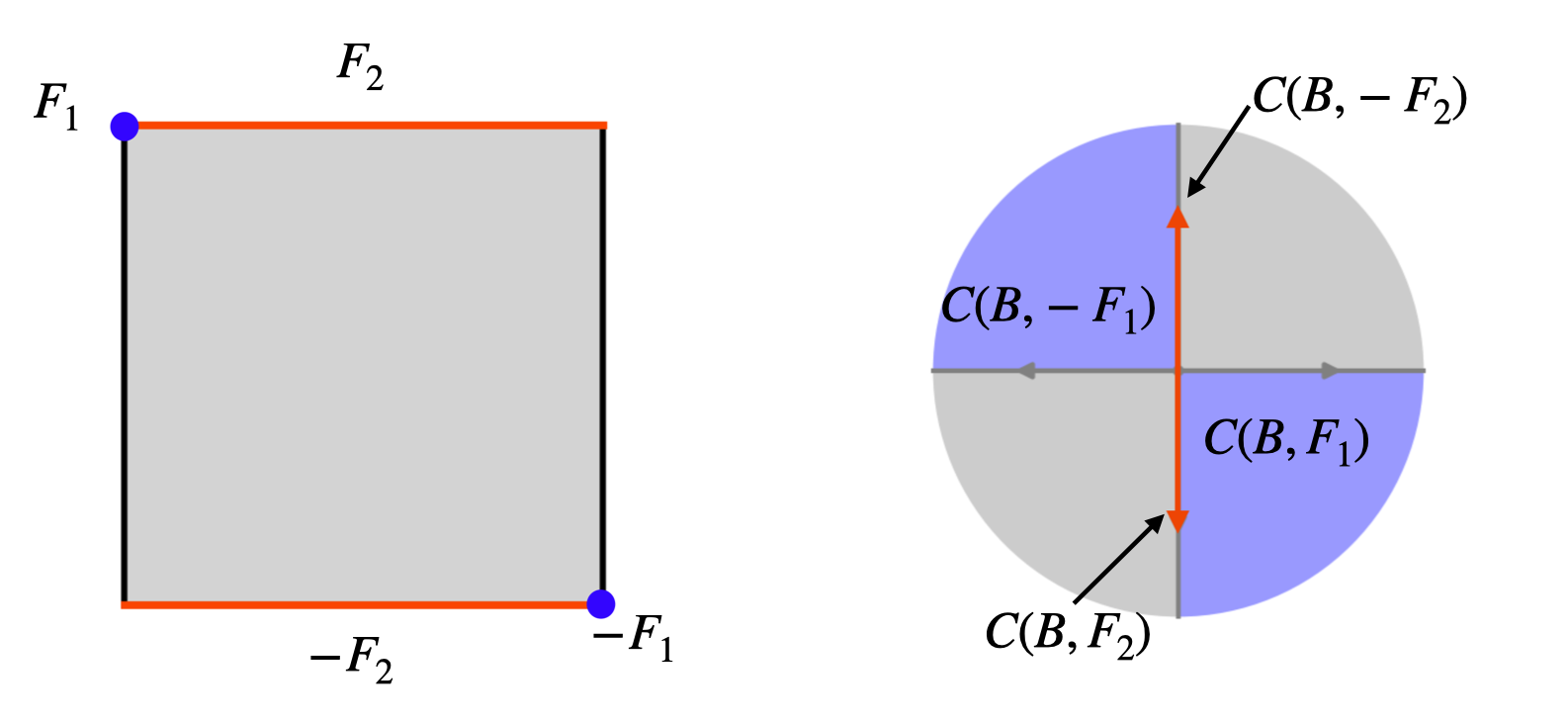} \hfill
		\caption{ Unit square $B$ and $\Sigma_B$ 
		}
		\label{fig:unit-square-normal-fan}
	\end{subfigure}
	\hspace{2cm}
	\begin{subfigure}[b]{0.4\textwidth}
		\includegraphics[scale=0.13]{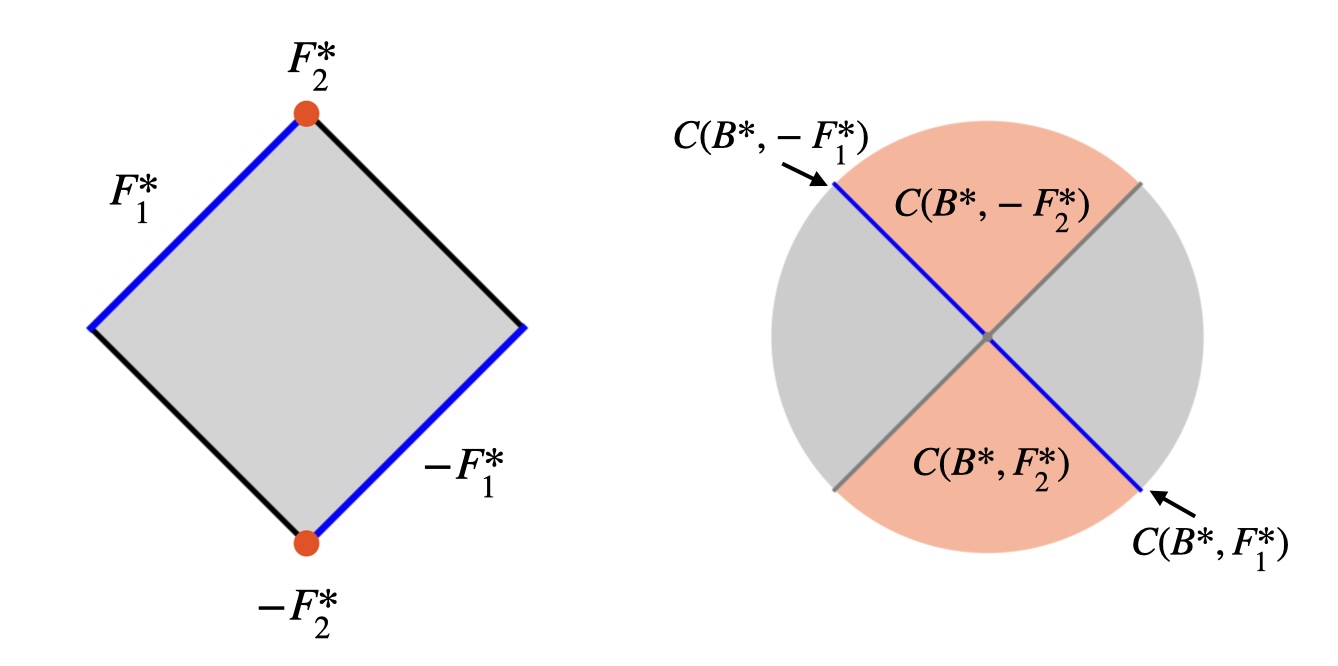} \hfill
		\caption{ $B^*$ and $\Sigma_{B^*}$  }
		\label{fig:unit-diamond-normal-fan}
	\end{subfigure}
	\newline
	\begin{subfigure}[b]{0.2\textwidth}
		\includegraphics[scale=0.2]{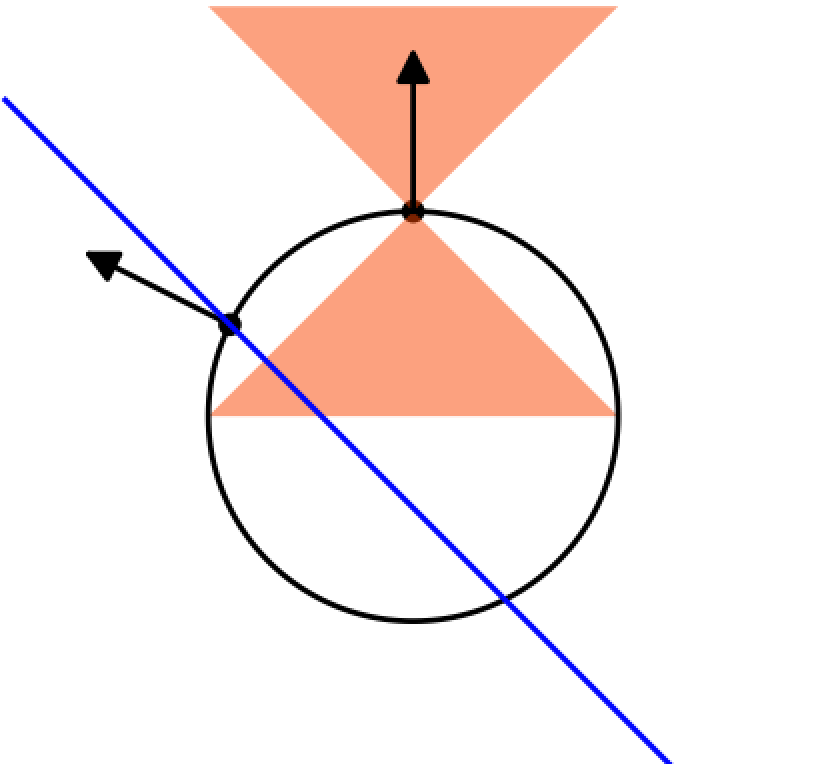}
		\caption{$V_1$}
		\label{fig:type_circle}
	\end{subfigure}
	\hfill
	\begin{subfigure} [b]{0.2\textwidth}
		\includegraphics[scale=0.21]{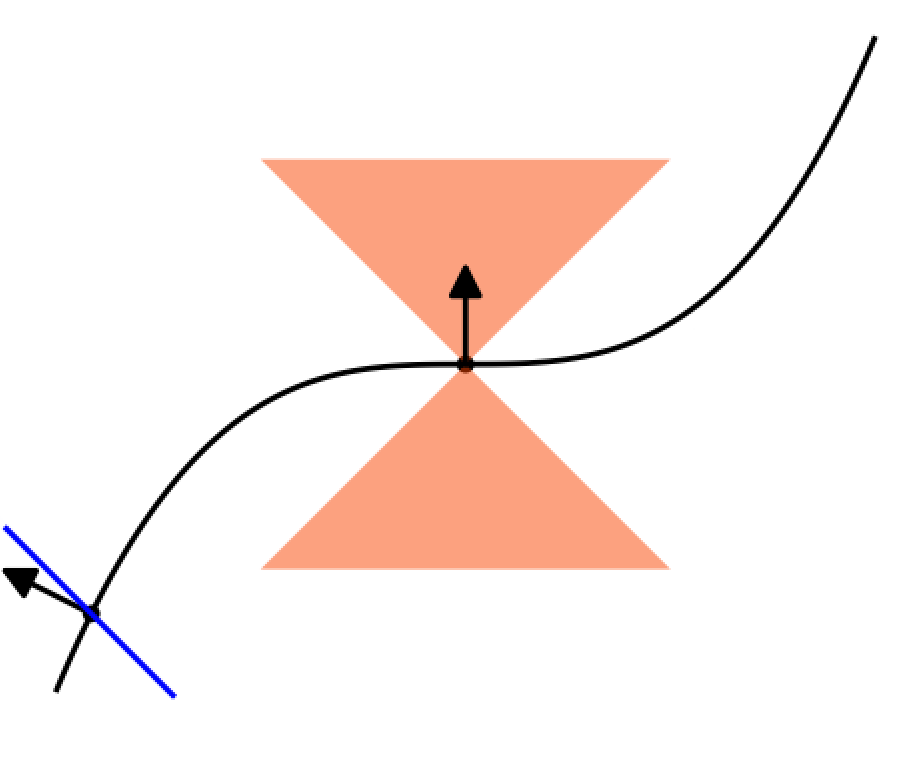}
		\caption{$V_2$}
		\label{fig:type_cubic}
	\end{subfigure}
	\hfill
	\begin{subfigure}[b]{0.2\textwidth}
		\includegraphics[scale=0.18]{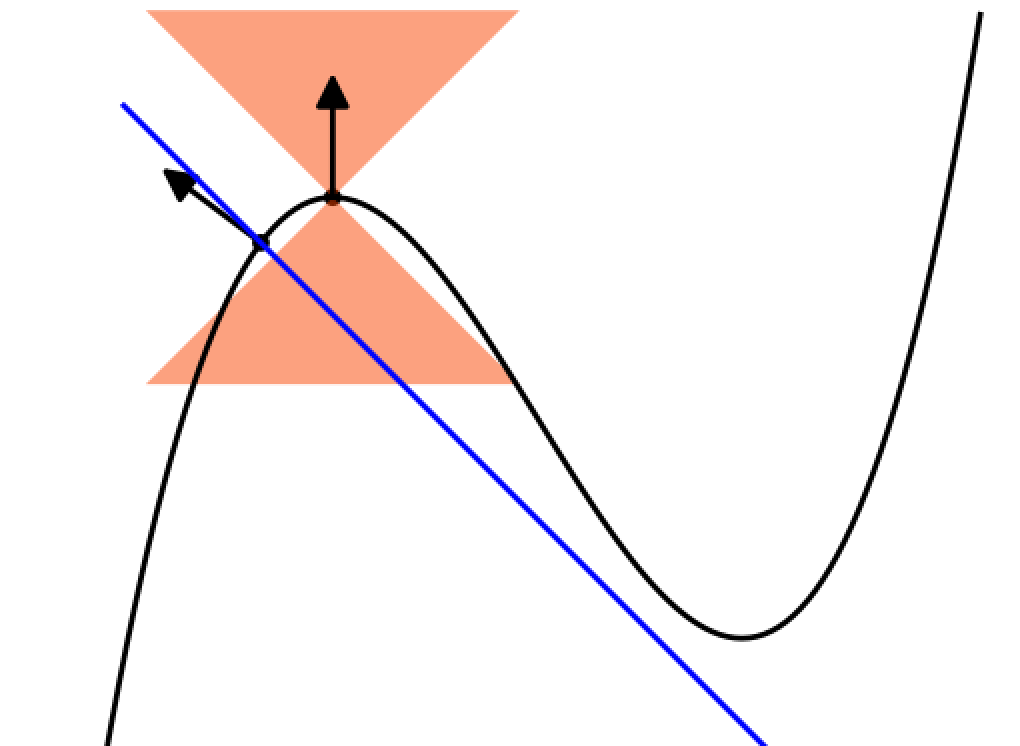}
		\caption{$V_3$}
		\label{fig:type_cubic_modes}
	\end{subfigure}
	\hfill
	\begin{subfigure}[b]{0.2\textwidth}
		\includegraphics[scale=0.25]{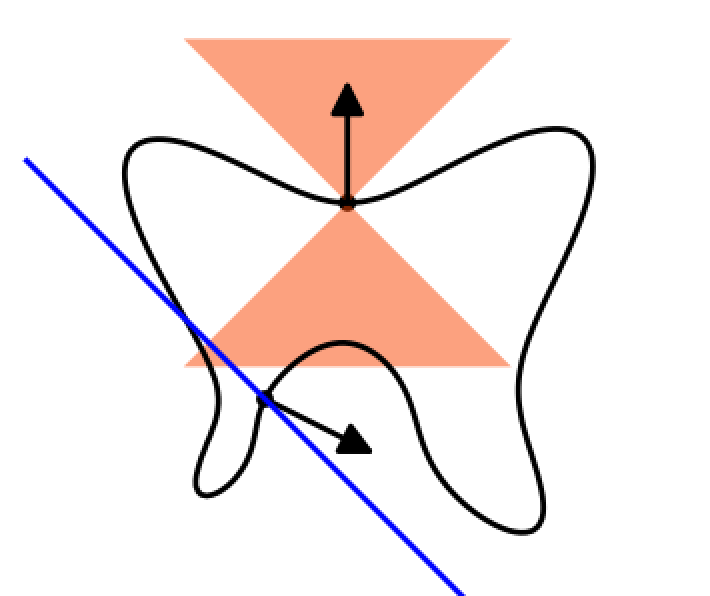}
		\caption{$V_4$}
		\label{fig:type_tooth}
	\end{subfigure}
	\caption{Illustration of $\type(v)$ in \Cref{ex:type_plane_curves}.}
\end{figure}

\begin{remark} \label{rmk:type2faces}
	For codimension-one varieties, any smooth point $v$ in the variety has a one-dimensional normal space. Therefore, it follows from \Cref{def:type} and \Cref{thm:generalresult} that $\type(v)$ 
	has only two symmetrically opposite faces.  
\end{remark}

Given a smooth codimension-one variety $X$ and a smooth point $v\in X$, one can easily compute the type of $v$ for a given polyhedral norm in \textsc{SageMath} as illustrated in the following example.

\begin{example}
Let the unit square in \Cref{ex:plane_curve} be the unit ball. The linear functionals defining this unit ball are given by $\{\pm x, \pm y\}$. Let $X = \V(f)$ be the zero set of the parabola $f=y-x^2$ and $v=(2,4)$ the chosen smooth point. The set $\type(v)$ can be computed as follows:

 \begin{verbatim}
f(x1,x2) = x2-x1^2 
v = (2,4)
L = [[0,1],[0,-1],[1,0],[-1,0]] 
L1 = [ [1] + s for s in L ]
B = Polyhedron(ieqs = L1)
NF = NormalFan(B)
grad = f.diff()
R1 = NF.cone_containing(grad(*v))
R2 = -R1
def equality(cone1, cone2):
    return set(tuple(v) for v in cone1.rays()) == set(tuple(v) for v in cone2.rays())
Type = []
for f in B.faces(B.dim()-R1.dim()):
    if equality(f.normal_cone(), R1) or equality(f.normal_cone(), R2):
        Type.append(f)
if Type:
    print("Type of v:")
    for face in Type:
        print("Face:", face)
        print("Vertices:", [v.vector() for v in face.vertices()])
        print("---")
\end{verbatim}

In the above code display, $\texttt{L}$ is the set of linear functionals, $\texttt{B}$ is the corresponding polyhedral unit ball and the code returns a list of the faces in $\type(v)$. $\texttt{R1}$ is the cone in the normal fan that contains the gradient of $f$ and $\texttt{R2}$ is the negative of $\texttt{R1}$. 
The list \texttt{Type} contains the faces of \texttt{B} whose inner normal cones are $\texttt{R1}$ and $\texttt{R2}$, and each face is given in terms of its vertices. The code is easily usable for codimension-one varieties in any dimension.

\end{example}

\begin{example}\label{ex:type_plane_curves}
Let $B$ be the unit square and $B^*$ its dual as in \Cref{ex:plane_curve}. Consider the plane curves 
	\begin{align*}
		V_1:\text{ } & x^2+y^2-1=0,&\\
		V_2:\text{ } & x^3-y=0,& \\
		V_3:\text{ } &y-x^3/20^3-10x^2/20^2-x/20+1=0, &\\
		V_4:\text{ } & x^4 - x^2 y^2 + y^4 - 4 x^2 - 2 y^2 - x - 4 y + 1= 0. &\end{align*}
     Let $F_1$ be the vertex and $F_2$ be the edge of $B$ depicted in blue and orange in \Cref{fig:unit-square-normal-fan}.
     On each of the curves in \Cref{fig:type_circle,fig:type_cubic,fig:type_cubic_modes,fig:type_tooth}, we consider two points $v_1$ and $v_2$ whose normal spaces intersect the inner normal cones of the vertex and the edge respectively. For these points, $\type(v_1)=\{F_1,-F_1\}$ and $\type(v_2)=\{F_2,-F_2\}$ respectively, and by \Cref{prop:coneandtype}, $\vor(v_1)\subseteq \overline{C(B^*,F_1^*)}\cup \overline{C(B^*,-F_1^*)}$ and $\vor(v_2)\subseteq \overline{C(B^*,F_2^*)}\cup \overline{C(B^*,-F_2^*)}.$ The plotted points and their types are summarized in \Cref{tab:Ex_types_planecurve}. The cones containing the Voronoi cell of each point are depicted in \Cref{fig:type_circle,fig:type_cubic,fig:type_cubic_modes,fig:type_tooth}.
     \begin{table}[h]
         \centering
         \begin{tabular}{c|c|c}
             Curve & Point $v_1$ with type $\{\pm F_1\}$ & Point $v_2$ with type $\{\pm F_2\}$  \\
             \hline
              $V_1$ & $\left(-\frac{1}{\sqrt{2}},\frac{1}{\sqrt{2}}\right)$ & $(0,1)$\\
              $V_2$ & $\left(\frac{-40}{3}\sqrt{30},\frac{-80}{9}\sqrt{30}\right)$ & $(0,0)$ \\
              $V_3$ & $\left(-\frac{20}{3}\sqrt{177} - \frac{200}{3}, \frac{38}{9}\sqrt{177} + \frac{1883}{27} \right)$ & $\left(-\frac{20}{3}\sqrt{97} - \frac{200}{3}, \frac{194}{27}\sqrt{97} + \frac{1883}{27}\right)$ \\
              $V_4$ & $(-1.070485058466869, -0.4474900924702774)$ & $(-0.06430591417681053, 1.945149406467458)$
         \end{tabular}
         \caption{The points $v_1$ and $v_2$ from \Cref{ex:type_plane_curves} along with their types.}
         \label{tab:Ex_types_planecurve}
     \end{table}
     
 \end{example}

\begin{example} \label{ex:space_curve_type}

Consider the twisted cubic $X$ in $\RR^3$ defined as the vanishing locus of the polynomials $f_1=xy-z$, $f_2=xz-y^2$ and $f_3=y-x^2$ (see \Cref{fig:twisted_cubic_type}). We take the cube as the unit ball $B,$ and the octahedron as its dual $B^*.$

\begin{figure}[h]
	\centering
	\includegraphics[scale=0.3]{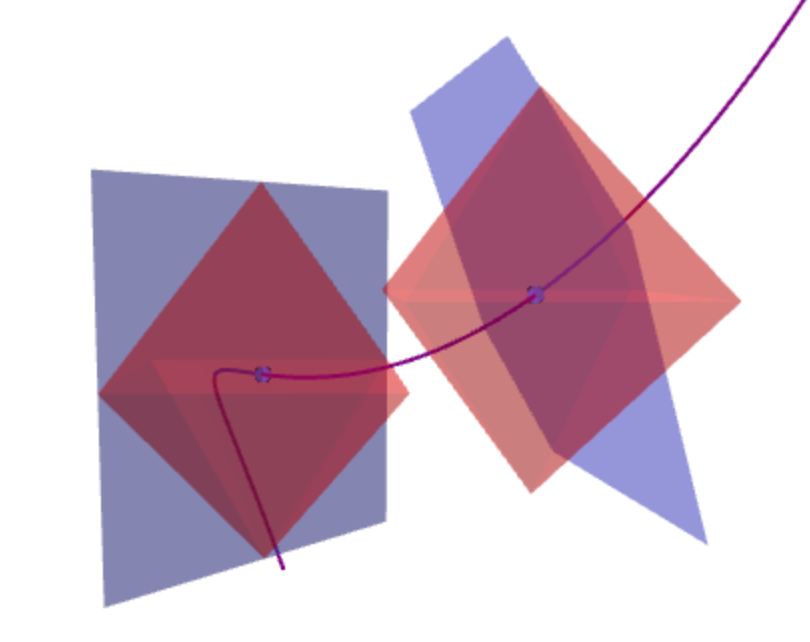}
	\caption{Twisted cubic with the normal space depicted at $v_1=(0,0,0)$ and $v_2=(0.6,0.6^2, 0.6^3).$}
	\label{fig:twisted_cubic_type}
\end{figure}
	The normal space of $X$ at $v_1=(0,0,0)$ is the green plane given by $x=0$ in \Cref{fig:normalfan_origin_tcubic}, which intersects the cones corresponding to four edges and four facets. The faces of $B$ associated with these cones are precisely the set $\type(v_1)$. The cones of the dual fan that contain the Voronoi cell of $v_1$ are depicted in blue in \Cref{fig:dualfan_tcubic_origin}.

	Now consider the point $v_2=(0.6,0.6^2,0.6^3)\in X$.
	The normal space at $v_2$ is given by $x+1.2 y+1.08 z=0$ and is shown in green in \Cref{fig:normalfan_tcubic_other}. This plane intersects the normal cone corresponding to six vertices and six edges which determine the $\type(v_2)$. Note that the closure of the two full-dimensional red cones is disjoint from the normal space and is the union of cones of faces in the set $\F \setminus \type(v_2)$. By
	\Cref{thm:generalresult}, $\vor_X(v_2)$ is contained in the closure of the union of the green two-dimensional cones and gray full-dimensional cones in \Cref{fig:dualfan_tcubic_other}.

\begin{figure}
	\centering
	\begin{subfigure}[t]{0.22\textwidth}
		\centering 
		\includegraphics[scale=0.2]{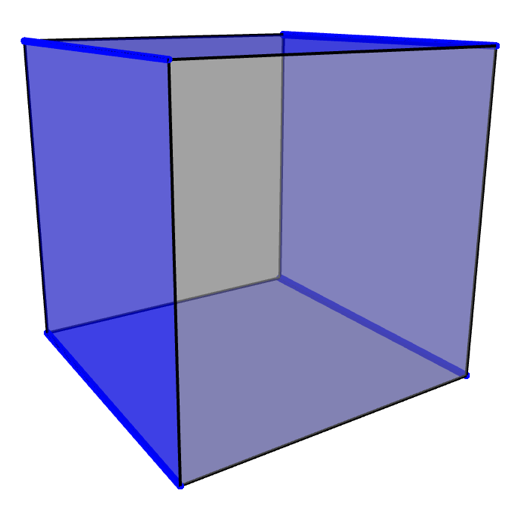}
		\caption{$B$}
		\label{fig:cube_tcubic}
	\end{subfigure}
	\begin{subfigure}[t]{0.22\textwidth}
		\centering
		\includegraphics[scale=0.25]{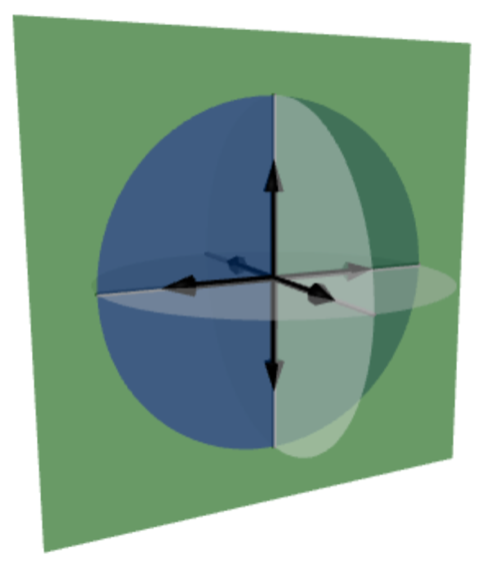}
		\caption{$\Sigma_B$ in gray and blue. In green, the normal space of $X$ at $v_1$. }
		\label{fig:normalfan_origin_tcubic}
	\end{subfigure}
	\begin{subfigure}[t]{0.22\textwidth}
		\centering
		\includegraphics[scale=0.2]{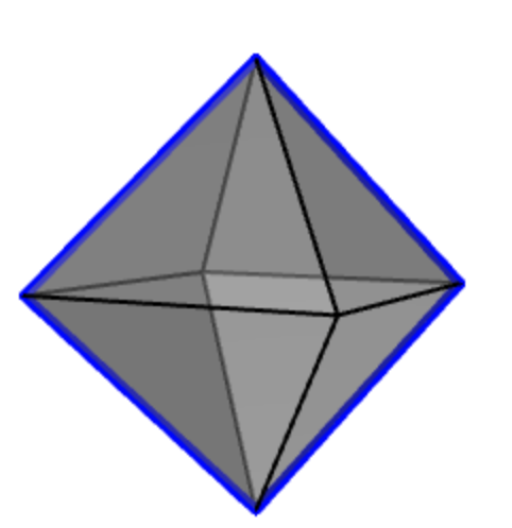}
		\caption{$B^*$ }
		\label{fig:octahedron_tcubic}
	\end{subfigure}
	\begin{subfigure}[t]{0.26\textwidth}
		\centering
		\includegraphics[scale=0.25]{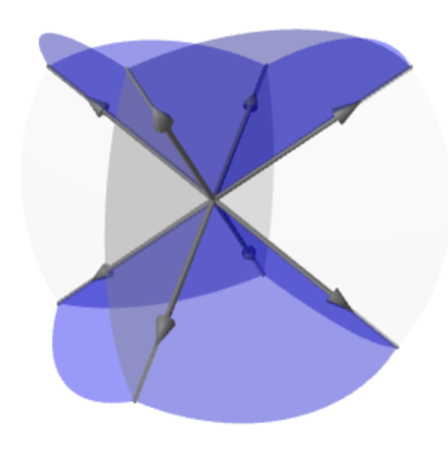}
		\includegraphics[scale=0.25]{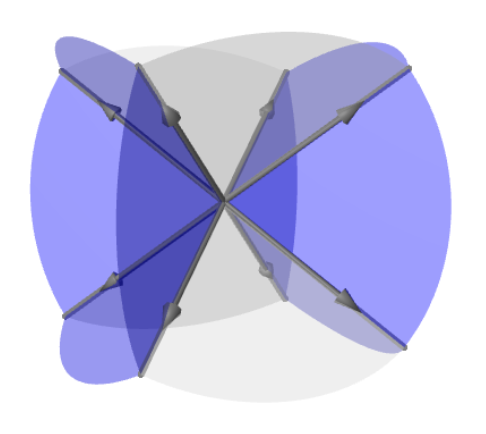}
		\caption{$\Sigma_{B^*}.$ In blue, the cones whose closure contains $\vor_X(v_1).$ }
		\label{fig:dualfan_tcubic_origin}
	\end{subfigure} \\
	\begin{subfigure}[t]{0.22\textwidth}
		\centering
		\includegraphics[scale=0.18]{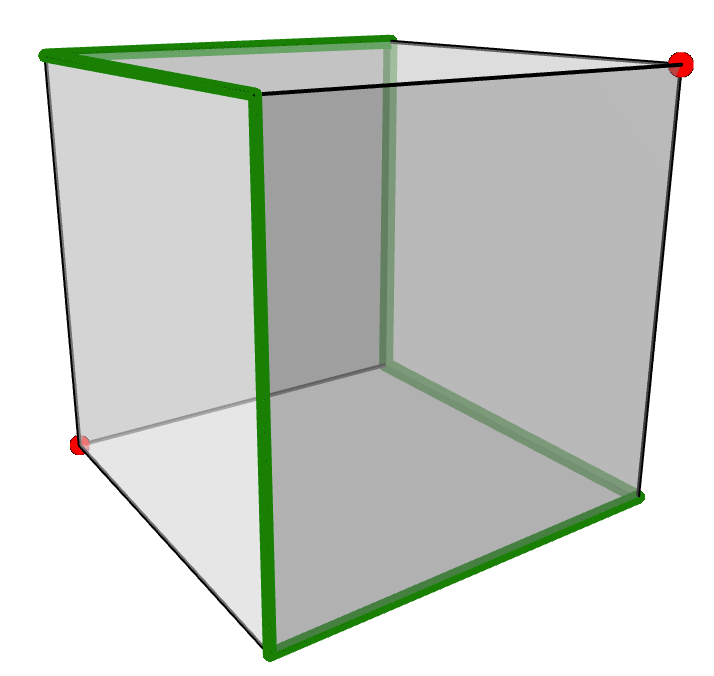}
		\caption{$B$}
		\label{fig:cube_tcubic_other}
	\end{subfigure}
	\begin{subfigure}[t]{0.22\textwidth}
		\centering
		\includegraphics[scale=0.2]{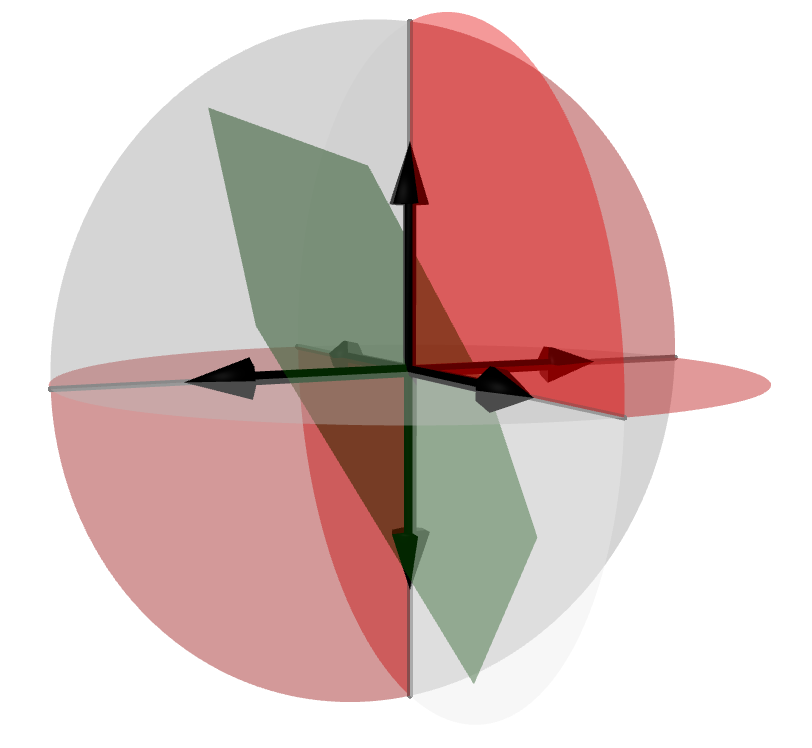}
		\caption{$\Sigma_B.$ The green plane is the normal space to $X$ at $v_2$.}
		\label{fig:normalfan_tcubic_other}
	\end{subfigure}
	\begin{subfigure}[t]{0.22\textwidth}
		\centering
		\includegraphics[scale=0.25]{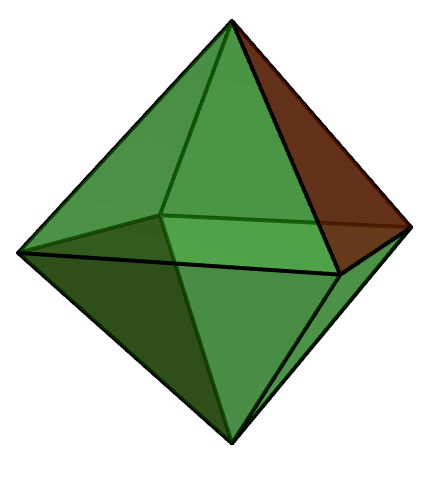}
		\caption{$B^*$ }
		\label{fig:octahedron_tcubic_other}
	\end{subfigure}
	\begin{subfigure}[t]{0.25\textwidth}
		\centering
		\includegraphics[scale=0.25]{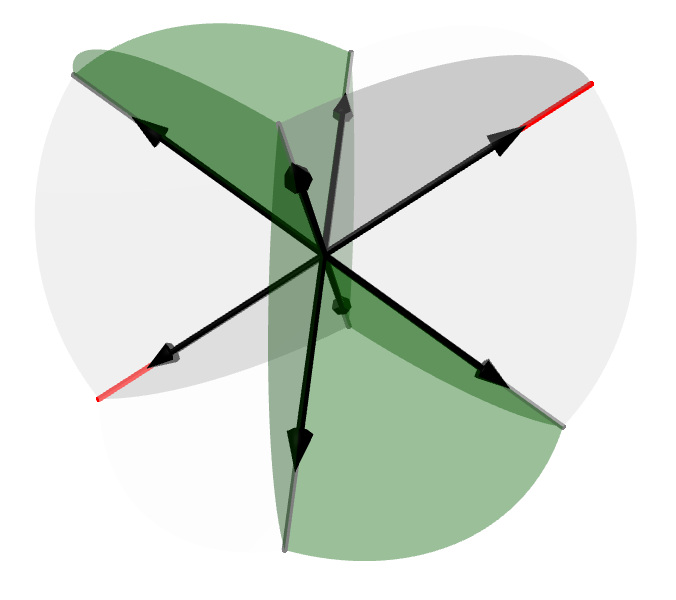}
		\caption{$\Sigma_{B^*}.$ In green and gray, the cones whose closure contains $\vor_X(v_2).$ }
		\label{fig:dualfan_tcubic_other}
	\end{subfigure}           
	
	\caption{Figures from \Cref{ex:space_curve_type}. The polyhedral ball $B$ is the cube and the variety $X$ is the twisted cubic. The top row illustrates the type of $v_1=(0,0,0)$ and the bottom row illustrates the type of
	 $v_2=(0.6,0.6^2,0.6^3)$.  
     $\type(v_1)$ is given by the blue faces of $B$ in (a) and $\type(v_1)$ is given by the green faces of $B$ in (e).         The corresponding dual faces to those in $\type(v_1)$ and $\type(v_2)$ are depicted in blue and green respectively in (c) and (g).
    The normal spaces of $v_1$ and $v_2$ are shown in green in (b) and (f) and their intersection with the normal fan of $B^*$ is given in (d) and (h) respectively.
        }
	\label{fig:space_curves}
\end{figure}

\end{example}

Let $v \in X$ be a smooth point and define $\overline{\type(v)}$ to be the closure of $\type(v)$. In particular,
$$\overline{\type(v)}:=\{  F\in \F(B) \mid F \subseteq F' ~~\mbox{for some}~~
F'\in \type(v)\}.$$
Furthermore, for given $u\in \RR^n\setminus X$ and $\lambda>0$, denote the face in which $B_\lambda(u)$ intersects $X$ at $v$ as $F_\l(u,v).$ 
We will now use this to define optimizing faces of a point $v\in X$.

\begin{definition}\label{def:optimizing_face}
	Let $X$ be an algebraic variety and $F$ be a face of the unit ball $B$ associated to a polyhedral norm. We say that $F$ is an \textit{optimizing face} for $v\in X$ if 
	\begin{enumerate}[label=(\roman*)]
		\item 
		$F\in \overline{\type(v)}$,
		\item there exists $u\in \RR^n\setminus X$ such that $d(u,X)=d(u,v)=\lambda$, 
		\item $F_\lambda(u,v)=F$. 
	\end{enumerate}
	Moreover, if $F\in \type(v)$ we say that $F$ is a \emph{proper optimizing face}.
\end{definition}

It may seem counterintuitive that an optimizing face of $v \in X$ may not lie in $\type(v)$. We refer the reader to \Cref{ex:optimizing face and types} for an explanation of this.

The definition of optimizing face is similar to the definition of type in~\cite[Proposition 4]{CELIK_Wasserstein}. In our case, we add the additional condition that the optimizing face is in $\overline{\type(v)}.$ Furthermore, we will say $F$ is an optimizing face of $v$ with respect to $u$ if  $F_\lambda(u,v)=F$ and $d(u,X)=\lambda.$

\begin{remark}\label{rem:conedim}
	 For completeness, we highlight how the notation and results in \cite{BECEDAS2024102229} compare to ours. 
    To describe the Voronoi cell of a point on a variety, the authors in \cite{BECEDAS2024102229} use the notion of a \textit{face cone}, which is equivalent to our use of the inner normal cone of the dual face.
    Using \Cref{prop:coneandtype}, one can compute the upper bound on the dimension of $\vor_X(v)$ as
    \begin{equation}
    	\max_{F\in \type(v)}\dim(\overline{C_v(B^*,F^*)}) .
    \end{equation}
    This bound is the upper bound provided in Theorem 3.6 in \cite{BECEDAS2024102229}, where the authors use affine spaces associated to each face $F\in \F(B)$ to decide which cones can contain the Voronoi cell. 

    Furthermore, Lemma 3.5 in \cite{BECEDAS2024102229} guarantees that if $v\in X$ is the closest point to $u\in \mathbb{A}^n \setminus X$, then the ball centered in $u$ that realizes the distance from $u$ to $X$ will intersect $X$ at $v$ in a face $F$ non-transversely. In our setting, $\overline{\type(v)}$ collects these faces for real varieties. That is, $\overline{\type(v)}$ contains the faces where $C(B^*,F^*)$ can potentially contribute to $\vor_X(v).$

\end{remark}

\begin{example}\label{ex:optimizing face and types}
	In \Cref{ex:type_plane_curves}, we saw that for $v_2=(0,1)\in V_1 = \V(x^2 + y^2 -1)$, $\type(v_2)=\{F_2,-F_2\}$. 
	However, note that for the point $u=(-1,2)$, the optimizing face $F_\l(u,v)$ is the vertex $-F_1$ contained in $-F_2$ where $\l=1$. This is consistent with \Cref{rem:conedim} and with \cite[Lemma 3.5]{BECEDAS2024102229} because the affine hull of $F_2$ translated to $v_2$ intersects $V_1$ non-transversely.
	For the point $v_1=\frac{1}{\sqrt{2}}(-1,1)$ in $V_1$, $\type(v_1)=\{F_1,-F_1\}$, and both $F_1$ and $-F_1$ are optimizing faces and hence, proper optimizing faces. This can be easily verified for points $u_1=(-2,2)$ and $u_2=\frac{1}{2\sqrt{2}}(-1,1).$ In the context of \Cref{rem:conedim}, note that the affine hull of $F_2$ translated to $v_1$ is the line defined by equation $y-\frac{1}{\sqrt{2}}=0$, which intersects $V_1$ at $v_1$ transversely hence, by Lemma 3.5 in \cite{BECEDAS2024102229}, $\vor(v_1)$ does not contain any point in $C_{F_2}.$

	For the curve $V_2 = \V(x^3 - y)$  from~\Cref{ex:type_plane_curves}, $\type((0,0))=\{F_2,-F_2\}$. However, since the affine hull of $F_2$ translated to the origin intersects $V_2$ transversely, by \cite{BECEDAS2024102229} we have that the Voronoi cell of $(0,0)$ does not contain any point in $C_{F_2}$.
    For every other $v \in V_2$, the vertices $F_1$ and $-F_1$ are proper. 
\end{example}

For a smooth point $v \in X$, $\type(v) \neq \emptyset$; 
however, $\vor_X(v)$ could contain only the point $v$ itself. This occurs when the set of optimizing faces is empty. Such a behavior is typical for saddle points of varieties in higher dimensions for specific norms. For instance, if in \Cref{ex:optimizing face and types} we consider the polyhedral ball to be generated by $\L=\left\{ \pm(x+y), \pm( -x+y)\right\}$, then the set of optimizing faces of $(0,0)$ is nonempty. This type of behavior is captured in \cite[Lemma 3.5]{BECEDAS2024102229}. 
We provide an example of this behavior for a variety in $\RR^3$ below. 

\begin{example}\label{ex:hyperbolic_example}
    Consider the hyperboloid $X = \V(36x^2+9y^2-4z^2-36) \subset \RR^3$ and take the three-dimensional cube as the unit ball (see \Cref{fig:cube-octa-fans}). The normal space of $v = (1,0,0)$ is the $x$-axis, so $\type(v)$ contains the two facets of the cube that are orthogonal to the $x$-axis. Note, however, that the set of optimizing faces of $v$ is empty because at $v$ there is a saddle point so there is no point outside of $X$ whose distance to $X$ is minimized by $v$.

    This situation is also a particular case of \cite[Lemma 3.5]{BECEDAS2024102229} with $F$ any facet in $\type\left(v \right).$ Indeed, since the hyperplane $x-1=0$ and the hyperboloid intersect transversely at $v$ (both considered as subvarieties of $\RR^3$), then the Voronoi cell of $v = (1,0,0)$ does not contain any point in $C(B^*,F^*).$  

    \begin{figure}[h]
        \centering
        \includegraphics[scale=0.3]{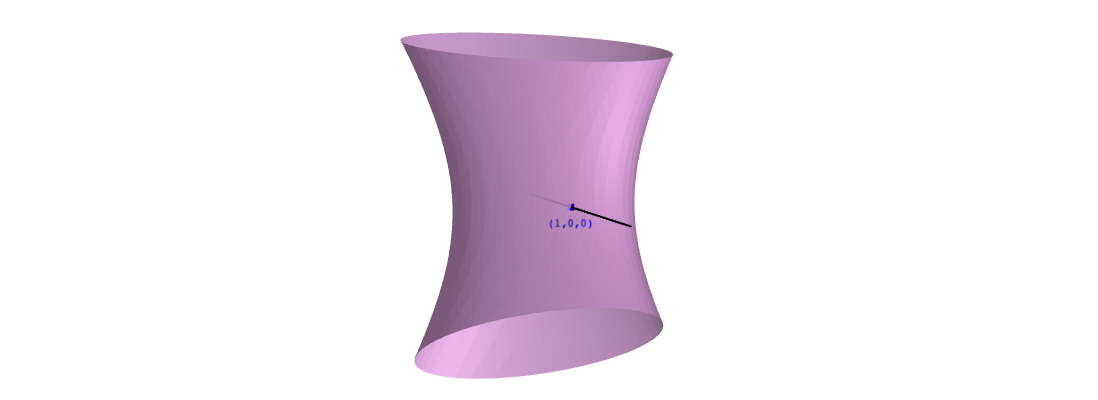}
        \caption{The hyperboloid defined as $\V(36x^2+9y^2-4z^2-36)$. The point $v = (1,0,0)$ is a saddle point. Its Voronoi cone is the $x$-axis depicted in black, but its Voronoi cell consists of the single point~$v$. }
        \label{fig:hyperbolic_example}
    \end{figure}

\end{example}

\section{Stratification of Codimension-One Manifolds}\label{sec:Codim1}

For codimension-one varieties, the normal space at a smooth point is spanned by a 
unique vector (up to scaling) and hence is one dimensional. Consequently, the type
of such a point is a set of two opposite faces of the same dimension (\Cref{rmk:type2faces}). 
Using this description, we can stratify the smooth part of a variety based on the expected dimension of the Voronoi cell. In particular, we expect the dimension of the Voronoi cell to be equal to the dimension of the Voronoi cone, so
we stratify a codimension-one variety by the dimension of the Voronoi cone defined by a fixed polyhedral norm with unit ball $B$. 
To that end, we will use the basic notions of topology (\Cref{sec:top})
to stratify $\RR\PP^{n-1}$ (\Cref{thm:strat-sphere}) and general 
codimension-one varieties (\Cref{thm:stratgeneral}).

Let $B$ be a polyhedral ball defined by a polyhedral norm $h_{\mathcal{L}}$, and let $\Sigma_B=\bigcup_{F\in \F}\{C_F\}$ denote its inner normal fan. 
Consider the following map
\begin{align}\label{eq:sphereproj}
\varphi:\RR^n\setminus \{0\} \to \RR\PP^{n-1}.
\end{align}
This is the projection of $\RR^n\setminus \{0\}$ onto the real projective space $\RR\PP^{n-1}$; it maps each vector to its equivalence class. The map $\varphi$ is surjective and continuous. The space  $\RR\PP^{n-1}$ is endowed with quotient topology induced by $\varphi$ and thus a set $U \subset \Rp^{n-1}$ is open if $\varphi^{-1}(U)$ is an open set in $\RR^{n}\setminus\{0\}.$ Note that $\varphi(C_F)=\varphi(C_{-F})$ and, if $U$ is open, then $\varphi^{-1}(U)$ is an open double cone, i.e. the union of two opposite open cones. 

\begin{lemma}\label{lem:disjointimage}
	Let $F,F'\in \F$, and $ F'\notin \{F,-F\}$. Then $\varphi({C_F})\cap \varphi({C_{F'}}) = \emptyset$.
\end{lemma}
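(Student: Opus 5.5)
The plan is to argue by contradiction, using two facts: the open inner normal cones of distinct faces are disjoint (they partition $\RR^n\setminus\{0\}$, as recalled when defining $\Sigma_B$), and central symmetry gives $C_{-F}=-C_F$.

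\emph{Symmetry.} Since $B=-B$, a hyperplane $H$ supports $B$ exactly at $F$ with inner normal $w$ if and only if $-H$ supports $B$ exactly at $-F$ with inner normal $-w$. Hence $C_{-F}=-C_F$ for every $F\in\F$, and $-F$ is again a proper face, so $-F\in\F$.

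\emph{Contradiction.} Suppose some class $[x]$ lies in $\varphi(C_F)\cap\varphi(C_{F'})$. Then there are $x\in C_F$ and $y\in C_{F'}$ with $\varphi(x)=\varphi(y)$, so $y=\lambda x$ for some $\lambda\neq 0$. There are two cases.
\begin{enumerate}[label=(\roman*)]
\item If $\lambda>0$, then $y\in C_F$, because open cones are closed under positive scaling. So $y\in C_F\cap C_{F'}$.
\item If $\lambda<0$, then $y\in -C_F=C_{-F}$, so $y\in C_{-F}\cap C_{F'}$.
\end{enumerate}
Because the inner normal fan partitions $\RR^n\setminus\{0\}$ into the cones indexed by the faces, case (i) forces $F'=F$ and case (ii) forces $F'=-F$. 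Both contradict the hypothesis $F'\notin\{F,-F\}$, so the intersection is empty.

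\emph{Disjointness of the cones.} The step most in need of justification is that distinct faces have disjoint open normal cones. I would check it directly from the definition. A vector $w\in C_F$ is the inner normal of a supporting hyperplane $H$ with $H\cap B=F$. Equivalently, the linear functional $w$ attains its minimum over $B$ exactly on $F$. The minimizing face of a functional is uniquely determined by the functional, so $w$ cannot also lie in $C_{F'}$ for any $F'\neq F$.
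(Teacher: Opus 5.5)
Your proof is correct and follows essentially the same route as the paper, which notes that $\varphi^{-1}(x)\subseteq (C_F\cup C_{-F})\cap(C_{F'}\cup C_{-F'})$ and that this set is empty because the fan cones are pairwise disjoint; your case split on the sign of $\lambda$ simply makes that containment explicit. You also justify $C_{-F}=-C_F$ and the disjointness of the open normal cones of distinct faces, both of which the paper uses without comment.
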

\begin{proof}
	Assume to the contrary that $x\in \varphi({C_F})\cap \varphi({C_{F'}})$. Then $\varphi^{-1}(x)\subseteq (C_F\cup C_{-F})\cap (C_{F'}\cup C_{-F'})$. This implies that $\varphi^{-1}(x)$ is an empty set. Since $\varphi$ is a surjective map, we get a contradiction.
\end{proof}

Let $[n]\coloneqq \{0,\ldots ,n-1\}$. For $i\in [n]$, recall that $\F_i$ is the set of all the faces of dimension $n-i-1$. We use this to define the following sets:
\begin{equation*}
	S_i=\bigsqcup_{F\in \F_i} ~~\varphi({C_F}).
\end{equation*}

Note that by construction, each set $S_i$ is a set of dimension $i.$ In the next result, we prove that the partition of sets above stratifies $\RR\PP^{n-1}.$

\begin{theorem}[Stratification of $\RR\PP^{n-1}$]\label{thm:strat-sphere}
	Let $B$ be the unit ball associated to a polyhedral norm, and $\Sigma_B$ its inner normal fan. The collection $\mathcal{S}_B=\{S_i\}_{i=0}^{n-1}$ stratifies $\RR\PP^{n-1}.$
\end{theorem}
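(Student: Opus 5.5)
We verify conditions \ref{def212:it1}--\ref{def212:it5} of \Cref{def:Strat_top_space} for $\{S_i\}_{i=0}^{n-1}$, ordering the index set by $i\le j$. Two basic facts drive everything. First, the open cones $C_F$, $F\in\F$, partition $\RR^n\setminus\{0\}$ (the inner normal fan). Second, $\varphi$ is an open, surjective, $2$-to-$1$ covering map onto $\RR\PP^{n-1}$ that identifies $C_F$ with $C_{-F}$, and $\F_i=-\F_i$ by central symmetry. From these, \ref{def212:it1} follows because every point of $\RR\PP^{n-1}$ has a preimage lying in some $C_F$. Condition \ref{def212:it4} and the disjointness of the union defining $S_i$ follow from \Cref{lem:disjointimage}, since $F'\notin\{F,-F\}$ whenever $F,F'$ have different dimensions. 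Note also that $\varphi^{-1}(S_i)=\bigsqcup_{F\in\F_i}C_F$, which we write $\Sigma_i$.

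For \ref{def212:it2}, we work with a single cone $C_F$ with $F\in\F_i$. It is a relatively open subset of the $i$-dimensional linear span of its generators, so it is an $i$-dimensional embedded submanifold of $\RR^n\setminus\{0\}$ invariant under positive scaling. Moreover, $C_F\cap C_{-F}=\emptyset$ (and indeed $C_{-F}\cap(-C_F)$ handles the antipodal identification), so $\varphi|_{C_F}$ is injective. Choosing local charts of $\RR\PP^{n-1}$ given by affine charts, the image of an open cone in a linear subspace is an open subset of a projective linear subspace $\PP(\mathrm{span}\,C_F)\cong\RR\PP^{i-1}$. Since $\dim S_i$ is asserted to be $i$, I expect the paper's indexing to make this come out with the right count, presumably identifying $\dim C_F$ with $i+1$. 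In any case, each $\varphi(C_F)$ is an open subset of a projective subspace, hence an embedded submanifold. Different $F\in\F_i$ give images whose closures meet only in lower strata. Each $\varphi(C_F)$ is open in $S_i$, because $\varphi^{-1}$ of it within $\Sigma_i$ is $C_F\cup C_{-F}$, which is open in $\Sigma_i$ since distinct same-dimensional cones are separated. Therefore $S_i$ is a finite disjoint union of relatively open submanifolds of the same dimension, and hence is itself a submanifold.

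For \ref{def212:it3} and \ref{def212:it5}, the key tool is \Cref{rem:coneclosure}: $\overline{C_F}=\bigsqcup_{F\subseteq F'}C_{F'}$, and the cones $C_{F'}$ with $F'\supsetneq F$ have strictly smaller dimension. Since $\varphi$ is a closed map (quotient by a finite group acting properly; equivalently, restrict $\varphi$ to the compact unit sphere), we get
\[
\overline{S_i}=\bigcup_{F\in\F_i}\varphi(\overline{C_F})=S_i\cup\bigcup_{j<i}(\text{some }\varphi(C_{F'})\subseteq S_j).
\]
Thus $\overline{S_i}\setminus S_i\subseteq\bigcup_{j<i}S_j$. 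To show this set is closed, observe that $\bigcup_{j<i}S_j=\varphi\bigl(\bigcup_{j<i}\Sigma_j\bigr)$, and $\bigcup_{j<i}\Sigma_j$ is the union of cones of faces of dimension $\ge n-i$, which by \Cref{rem:coneclosure} is closed in $\RR^n\setminus\{0\}$. Its image under the closed map $\varphi$ is closed. Then $\overline{S_i}\setminus S_i=\overline{S_i}\cap\bigcup_{j<i}S_j$ is closed, which gives local closedness.

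The frontier condition \ref{def212:it5} is the main obstacle, because a lower stratum $S_j$ is a union over several faces and need not lie entirely in $\overline{S_i}$. Here the plan is to use that every face $F'$ of dimension $n-1-j$ is contained in some face of dimension $n-1-i$ whenever $j<i$, since faces of a polytope form a graded lattice and every proper face lies in faces of all larger dimensions below $n$. By \Cref{rem:coneclosure}, this gives $C_{F'}\subseteq\overline{C_F}$, hence $S_j\subseteq\overline{S_i}$ for all $j<i$. So whenever $S_j\cap\overline{S_i}\neq\emptyset$ with $j\ne i$, the closure computation forces $j<i$ and then $S_j\subseteq\overline{S_i}$, which is exactly \ref{def212:it5}.
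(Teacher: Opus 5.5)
Your plan follows the paper's proof: you check (i)--(v) directly from the partition of $\RR^n\setminus\{0\}$ into the open cones $C_F$, \Cref{lem:disjointimage}, \Cref{rem:coneclosure}, and the closure formula $\overline{S_i}=S_0\sqcup\cdots\sqcup S_i$. Your treatment of (ii) is more detailed than the paper's, and your proof of (iii) via $\overline{S_i}\setminus S_i=\overline{S_i}\cap\bigcup_{j<i}S_j$ is a correct variant. However, the step you use for (v) is stated backwards and fails as written. For $j<i$, a face $F'\in\F_j$ has dimension $n-1-j>n-1-i$, so it cannot be contained in a face of dimension $n-1-i$. Even if $F'\subseteq F$ held, \Cref{rem:coneclosure} would give $C_F\subseteq\overline{C_{F'}}$, which is the opposite of the inclusion you need. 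The correct fact is the other half of gradedness. $F'$ is itself a polytope of dimension $n-1-j$, so it has faces of every smaller dimension, in particular a face $F$ of dimension $n-1-i$. This $F$ is then a face of $B$, i.e.\ $F\in\F_i$. Now $F\subseteq F'$, so \Cref{rem:coneclosure} gives $C_{F'}\subseteq\overline{C_F}$, and continuity of $\varphi$ gives $\varphi(C_{F'})\subseteq\varphi(\overline{C_F})\subseteq\overline{S_i}$. Hence $S_j\subseteq\overline{S_i}$. This is the same fact the paper uses implicitly when it rewrites $\overline{S_i}$ as $S_0\sqcup\cdots\sqcup S_i$.

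A few side claims are also false as stated, though your argument does not actually rest on them. The map $\varphi$ is not $2$-to-$1$ on $\RR^n\setminus\{0\}$, and $\varphi|_{C_F}$ is not injective, since it is constant on rays. What holds is that $\varphi$ is injective on $C_F\cap S^{n-1}$, because $C_F\cap(-C_F)=C_F\cap C_{-F}=\emptyset$. Likewise $\varphi$ is not a closed map: the closed hyperplane $\{x_n=1\}$ has non-closed image. It does, however, send closed cones in $\RR^n\setminus\{0\}$ to closed sets, because their intersections with $S^{n-1}$ are compact, and that is all you use in (iii). Finally, for $F\in\F_i$ one has $\dim C_F=n-\dim F=i+1$. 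So $\varphi(C_F)$ is an open subset of $\PP(\mathrm{span}\,C_F)\cong\RR\PP^{i}$, consistent with $\dim S_i=i$; your hedge on the indexing was right.
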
	

\begin{proof}
	We will show that the collection $\mathcal{S}_B$ satisfies the properties \ref{def212:it1}-\ref{def212:it5} of~\Cref{def:Strat_top_space}.
	
	To show \ref{def212:it1}, we note that $\bigcup_{F\in \F}\{C_F\}=\RR^{n}\setminus\{0\}$. Hence, by surjectivity of $\varphi$, we have $\Rp^{n-1}=\bigcup_{i\in [n]}S_i.$
For all $i\in [n]$, $S_i$ is a smooth manifold with respect to the subspace topology of $\Rp^{n-1}$ and $S_i\hookrightarrow \Rp^{n-1}$ is a smooth embedding. Hence, $S_i$ is an embedded submanifold of $\Rp^{n-1}$ for all $i$. This shows \ref{def212:it2} holds.
	
	We now show \ref{def212:it3}. Since $\varphi$ is open and continuous, we have $\overline{\varphi(C_F)}=\varphi(\overline{C_F})$ for all $F\in \F$. Consequently, $\overline{S_i}=\bigsqcup_{F\in \F_i} \varphi(\overline{C_F})$ and by \Cref{rem:coneclosure}, we can rewrite this as $\overline{S_i}=S_0\sqcup S_1 \ldots \sqcup S_{i}$. Note that $\overline{S_i}\setminus S_i=\overline{S_{i-1}}$ for $i\geq 1$ which is a closed set in $\Rp^{n-1}$ and for $i=0$, $\overline{S_0}\setminus S_0$ is an empty set which is also a closed set. Therefore, $S_i$ is locally closed for all $i\in [n]$.
	
	For item \ref{def212:it4}, assume $i\neq j$. Then, $\F_i \cap \F_j=\emptyset$ and hence, $C_{F_i}\cap C_{F_j}=\emptyset$ for all $F_i\in \F_i$ and $F_j\in \F_j.$ By using \Cref{lem:disjointimage}, it follows that $S_i\cap S_j=\emptyset$.
	
	Finally, we prove \ref{def212:it5}. Using $\overline{S_j}=S_0\sqcup\cdots \sqcup S_j$ and from  \ref{def212:it4} $S_k\cap S_j=\emptyset$ whenever $k\neq j$, we obtain $S_i\cap \overline{S_j}=S_i$ if $i\leq j$ and $S_i\cap \overline{S_j}=\emptyset$
	if $i>j$. Thus, if $S_i\cap \overline{S_j}\neq \emptyset$, then $i\leq j$ and $S_i\subset \overline{S_j}$.
\end{proof}

Let $X \subset \RR^n$ be a smooth real codimension-one algebraic variety. Let $\varphi$ be the map defined in \eqref{eq:sphereproj}. Consider the following set:
\begin{align*}
\mathcal{N}(X)=\{(v,n_v)\in \RR^n\times \RR\PP^{n-1} \mid v\in X \text{ and } N_v(X)=\varphi^{-1}(n_v)\cup \{0\}\},
\end{align*}
where $N_v(X)$ is the normal space of $X$ at $v$. Let $\psi$ be the projection map from $\N(X)$ to $\Rp^{n-1}$ given by $(v,n_v)\mapsto n_v.$ This map may not necessarily be surjective. We further denote by $\pi_1$ and $\pi_2$ the projection of $\N(X)$ onto $X$ and $ \psi(\N(X))$ respectively. Note that $\N(X)\subset X\times \RR\PP^{n-1}$ and both $\N(X)$ and $\psi(\N(X))$ are endowed with the subspace topology. Both $\pi_1$ and $\pi_2$ are continuous maps. Furthermore, the map $\pi_1$ is bijective with $\pi_1(\N(X))=X$ and $\pi_2$ is surjective by construction. 

Let $B$ be a unit ball for some polyhedral norm and  $\S_B:=\{\tilde{S}_i\}_{i=0}^{n-1}$ be the corresponding stratification of $\Rp^{n-1}.$ Let $S_i:=\tilde{S}_i\cap \psi(\N(X))$ for $i=0,\ldots, n-1$. For $i = 0,\ldots, n-1$,  we define the following sets: 
\begin{align}\label{eq:def-Xi}
	X_i:=\pi_1\circ \pi_2^{-1}(S_i).
\end{align}
In \Cref{thm:stratgeneral}, we will show that $\mathcal{X}_B:=\{X_i\}_{i=0}^{n-1}$ defines a stratification of $X.$ To that end, we consider the following map:
\begin{align}\label{eq:map-phi}
	\phi:=\pi_2\circ \pi_1^{-1}:X\to \psi(\N(X)).
\end{align}

\begin{lemma}\label{lem:composedmap}
	The map $\phi$ in \eqref{eq:map-phi} is continuous and an open map.
\end{lemma}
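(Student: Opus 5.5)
The plan is to factor $\phi=\pi_2\circ\pi_1^{-1}$ and handle the two pieces separately. First show that $\pi_1:\N(X)\to X$ is a homeomorphism. Then show that $\pi_2$ is continuous and open. Continuity and openness of $\phi$ then follow because compositions of continuous (resp. open) maps are continuous (resp. open).

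\emph{Step 1: $\pi_1$ is a homeomorphism.} Since $X$ is a smooth codimension-one variety, near every $v\in X$ there is a local defining polynomial $f$ with $\nabla f(v)\neq 0$ and $N_v(X)=\RR\,\nabla f(v)$. So the assignment $g:v\mapsto \varphi(\nabla f(v))$ is locally the composition of the continuous map $v\mapsto\nabla f(v)\in\RR^n\setminus\{0\}$ with the continuous quotient map $\varphi$ of \eqref{eq:sphereproj}. It is independent of the choice of $f$ because it only depends on the line $N_v(X)$. Hence $g:X\to\Rp^{n-1}$ is continuous. The map $v\mapsto (v,g(v))$ is then a continuous inverse of $\pi_1$, since $\N(X)$ is exactly the graph of $g$ and carries the subspace topology of $X\times\Rp^{n-1}$. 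Thus $\pi_1^{-1}$ is continuous, and $\pi_1$ is a homeomorphism. In particular $\phi=g$ (with codomain restricted to $\psi(\N(X))$), which already gives continuity of $\phi$.

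\emph{Step 2: openness of $\pi_2$.} The route I would try first is the paper's own tool, \Cref{lem:restr_proj}. The projection $X\times\Rp^{n-1}\to\Rp^{n-1}$ is open, being a product projection, and restricting it should give an open map $\pi_2:\N(X)\to\psi(\N(X))$.

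\emph{Main obstacle.} Step 2 is where I expect the real difficulty. \Cref{lem:restr_proj} only applies to restrictions to a full preimage $f^{-1}(T)$. But $\N(X)$ is a graph, not the preimage $X\times\psi(\N(X))$, so the lemma does not apply directly. To make the argument rigorous I would instead argue directly with $g$: given an open $U\subseteq X$, show that $g(U)$ is open in $g(X)$. I would try to use local charts of $X$ together with the local structure of the Gauss map $g$.

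I must flag that this can genuinely fail at fold points of $g$. A plane curve with an inflection point has a Gauss map whose image of a small neighborhood of the inflection point is a half-open arc. So the open-map assertion likely requires an additional hypothesis. One option is to assume $g$ is a local diffeomorphism onto its image, i.e.\ nonvanishing Gauss curvature. Another is to restrict to the open dense set of points where the differential of $g$ has locally constant rank, and then apply the constant rank theorem to obtain local openness onto the image.

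I would therefore prove continuity unconditionally as in Step 1. I would establish openness under such a rank hypothesis, and examine exactly which property is used later in \Cref{thm:stratgeneral}. Possibly only continuity together with $\overline{\phi^{-1}(S)}\subseteq\phi^{-1}(\overline S)$ is needed there, and that holds for any continuous map.
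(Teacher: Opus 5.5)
Your diagnosis is correct. The step you flag is an actual error in the paper's proof, not a gap in yours: the openness half of the lemma is false as stated.

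The paper does not apply \Cref{lem:restr_proj} to the product projection. It applies it to $\psi\colon \N(X)\to\Rp^{n-1}$ with $T=\psi(\N(X))$, and there $\psi^{-1}(T)=\N(X)$ genuinely is a full preimage. The broken step comes just before: the claim that $\psi$ is open ``because it is a projection.'' The projection $X\times\Rp^{n-1}\to\Rp^{n-1}$ is open, but its restriction to the graph $\N(X)$ need not be. If it were, $\psi(\N(X))$ would be open in $\Rp^{n-1}$, which already fails for a hyperplane.

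The paper's continuity argument is also flawed. It asserts $\pi_1((U\times V)\cap\N(X))=U$, but this set is $U\cap g^{-1}(V)$, and its openness is exactly the continuity of the Gauss map $g$. Your Step~1 is the correct proof of the continuity half.

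Your counterexample needs one more ingredient, because openness is measured in $\psi(\N(X))$, not in $\Rp^{n-1}$. For $y=x^3$ the normals are $[-3x^2:1]$. A neighborhood of the origin maps onto $\{[-s:1] : 0\le s<\delta\}$. This set is relatively open in $\psi(\N(X))=\{[-s:1] : s\ge 0\}$, since $[0:1]$ is an endpoint of the whole Gauss image, and in fact $\phi$ is open for this curve. What you need is a fold value that the Gauss image of some other part of $X$ approaches from the other side.

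The paper's own curve from \Cref{ex:pos_dim_medial_axis} does this. For $X=\V(y-2x^2+2x^4)$ with $p(x)=2x^2-2x^4$, the normal line at $(x,p(x))$ is $[-p'(x):1]$, where $p'(x)=4x-8x^3$.
\begin{itemize}
\item At the inflection abscissa $x_0=1/\sqrt{6}$, $p'$ has a strict local maximum $c=8/(3\sqrt{6})$.
\item So a small neighborhood of $(x_0,p(x_0))$ maps onto $\{[s:1] : -c\le s<-c+\delta\}$.
\item Since $p'$ is a cubic, it is surjective onto $\RR$, so $\psi(\N(X))=\{[s:1] : s\in\RR\}$.
\item This half-open arc is not open in $\psi(\N(X))$.
\end{itemize}
This $X$ is smooth, connected and without boundary, so it meets every hypothesis in the section.

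On the repair: nonvanishing Gauss--Kronecker curvature makes $\phi$ a local diffeomorphism between $(n-1)$-manifolds. It is then open into $\Rp^{n-1}$, and a fortiori into $\psi(\N(X))$, so that fix works. The constant-rank alternative is weaker than you suggest. For rank $r<n-1$ it only gives openness onto an $r$-dimensional local image, which need not be open in $\psi(\N(X))$ if other parts of $X$ have normals accumulating there.

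Your guess about downstream use is also right. In \Cref{thm:stratgeneral} the lemma is cited only in (iii), where continuity suffices. Openness is invoked only in the remark after that theorem about the dimension of the strata.
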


\begin{proof}
    Since $\pi_1$ is bijective, $\phi$ is well defined. 
    Since $\N(X)$ is endowed with the subspace topology of $X\times \Rp^{n-1}$, an open set of $\N(X)$ is of the form $(U\times V)\cap \N(X)$, for some open sets $U\subset X$ and $V\subset \Rp^{n-1}$.
    Note that $\pi_1((U\times V)\cap\N(X))=U$ which is an open set in $X$. Therefore, $\pi_1^{-1}$ is a continuous map. Now, the continuity of $\phi$ follows directly from the continuity of $\pi_1^{-1}$ and $\pi_2$. 
	
	To show that $\phi$ is an open map, it is enough to show that $\pi_1^{-1}$ and $\pi_2$ are open maps. Since $\pi_1$ is bijective and $\pi_1^{-1}$ is continuous, $\pi_1^{-1}$ is an open map. 
	The map $\psi:\N(X)\to \Rp^{n-1}$ is a projection map and hence, an open map. By \Cref{lem:restr_proj}, it is straightforward to verify that $\pi_2$ is an open map. 
\end{proof}

\begin{theorem}\label{thm:stratgeneral}
	Let $B$ be a unit ball with respect to a polyhedral norm and let $\S_B:=\{\tilde{S}_i\}_{i=0}^{n-1}$ be the stratification of $\Rp^{n-1}$. Let $X$  be a real, smooth, connected, codimension-one manifold without boundary. If $\phi$ in \eqref{eq:map-phi} is transversal to $S_i$ for all $i=0,\ldots n-1$, then $\mathcal{X}_B$ stratifies $X$ with respect to $B$.
\end{theorem}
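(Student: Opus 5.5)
The plan is to verify items \ref{def212:it1}--\ref{def212:it5} of \Cref{def:Strat_top_space} for $\mathcal{X}_B$ by pulling each property back from the stratification $\{\tilde S_i\}$ of $\Rp^{n-1}$ in \Cref{thm:strat-sphere} along $\phi$. Since $\pi_1$ is a bijection, $X_i=\pi_1\circ\pi_2^{-1}(S_i)=\phi^{-1}(S_i)$, and every property will be read off from this description.

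Items \ref{def212:it1} and \ref{def212:it4} are set-theoretic. The sets $\tilde S_i$ cover $\Rp^{n-1}$, so the sets $S_i=\tilde S_i\cap\psi(\N(X))$ cover $\psi(\N(X))=\phi(X)$, and therefore $X=\bigcup_i\phi^{-1}(S_i)$. The $\tilde S_i$ are pairwise disjoint, so their preimages are disjoint as well. For item \ref{def212:it2} I will apply \Cref{thm:preimage_thm} to $\phi$, which is smooth as the Gauss-type map $v\mapsto[\nabla f(v)]$ of the smooth hypersurface. Each $S_i$ is an embedded submanifold of $\Rp^{n-1}$: it is an open subset of $\tilde S_i$ because $\psi(\N(X))$ is open (it is the image of the open map $\phi$ from \Cref{lem:composedmap}), and $\tilde S_i$ is embedded by \Cref{thm:strat-sphere}. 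By the transversality hypothesis, $\phi^{-1}(S_i)=X_i$ is then an embedded submanifold of $X$ of codimension $n-1-i$. Empty strata are harmless.

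Item \ref{def212:it3} uses continuity. Since $S_i$ is locally closed in $\psi(\N(X))$ (intersecting with a subspace preserves local closedness), we can write $S_i=U\setminus V$ with $U,V$ open. Then $X_i=\phi^{-1}(U)\setminus\phi^{-1}(V)$ is a difference of open sets, so it is locally closed.

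The frontier condition \ref{def212:it5} is the main obstacle. Continuity of $\phi$ alone gives only $\overline{X_j}\subseteq\phi^{-1}(\overline{S_j})=X_0\sqcup\cdots\sqcup X_j$. That yields $X_i\cap\overline{X_j}\neq\emptyset\Rightarrow i\le j$, but it does not give full containment $X_i\subset\overline{X_j}$. To get the reverse inclusion I will use openness of $\phi$ (\Cref{lem:composedmap}). Take $x\in X_i$ with $i\le j$, $S_j\ne\emptyset$, and any neighborhood $W$ of $x$. Then $\phi(W)$ is an open neighborhood of $\phi(x)\in S_i\subseteq\overline{S_j}$ inside $\psi(\N(X))$, so it meets $S_j$, and hence $W$ meets $X_j$. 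This shows $X_i\subseteq\overline{X_j}$, i.e.\ $\phi^{-1}(\overline{S_j})=\overline{\phi^{-1}(S_j)}$. One subtlety is that $\overline{S_j}$ must be computed within $\psi(\N(X))$, where the closure relation of \Cref{thm:strat-sphere} should survive because that set is open in $\Rp^{n-1}$. I expect this openness step to need the most care. If a stratum $S_j$ fails to accumulate on part of $S_i$ inside $\phi(X)$, then I would fall back on the ``equivalently'' form of item \ref{def212:it5}, namely $X_i\subset\overline{X_j}\Rightarrow i\le j$, which follows directly from the continuity inclusion above.
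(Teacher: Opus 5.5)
Your handling of \ref{def212:it1}--\ref{def212:it4} is essentially the paper's. For \ref{def212:it2}, drop the claim that $S_i$ is open in $\tilde S_i$, which is false in general. Instead apply \Cref{thm:preimage_thm} to $\phi\colon X\to\Rp^{n-1}$ and $\tilde S_i$ directly, since $\phi^{-1}(\tilde S_i)=X_i$.

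The gap is in \ref{def212:it5}. Your argument there uses only continuity and openness of $\phi$, never transversality, and the openness is not available.
\begin{itemize}
\item \Cref{lem:composedmap} concerns $\phi$ as a map onto $\psi(\N(X))$. So it says nothing about $\psi(\N(X))$ being open in $\Rp^{n-1}$; for a hyperplane it is a single point.
\item More seriously, a Gauss map folds along parabolic loci and is in general not open onto its image. The torus of \Cref{ex:torus} shows this. Near $(\sqrt2,\sqrt2,1)$ the normals are $(\cos\beta\cos\theta,\cos\beta\sin\theta,\sin\beta)$ with $\beta$ near $\pi/2$ and $\theta$ near $\pi/4$. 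In the affine chart at $[0:0:1]$ these fill only a thin double sector about the diagonal, which misses $\tilde S_1$. That is not a neighborhood in $\psi(\N(X))=\Rp^2$.
\item Your argument applies verbatim to this torus and would yield $X_0\subset\overline{X_1}$. \Cref{ex:torus} shows this is false.
\item Even under the theorem's hypotheses, $\phi$ can fold over the open stratum $\tilde S_{n-1}$, where transversality imposes nothing. So openness of $\phi$ cannot be the mechanism.
\item The fallback does not rescue the step. The implication $X_i\subset\overline{X_j}\Rightarrow i\le j$ follows from continuity for every $X$, that torus included. It is strictly weaker than the frontier condition, so proving it does not establish \ref{def212:it5}.
\end{itemize}

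The missing idea is to use transversality at exactly this step, as the paper's proof of \ref{def212:it5} does (together with the absence of boundary). Here is how to make it precise.
\begin{itemize}
\item Let $x\in X_i$ with $\phi(x)\in\varphi(C_F)$ and $F\in\F_i$, and work near a lift of $\phi(x)$ in $C_F\subset\RR^n$.
\item Near such a point the fan $\Sigma_B$ is locally a product. By \Cref{rem:coneclosure}, only cones $C_{F'}$ with $F'\subseteq F$ occur. Each agrees locally with $\rho^{-1}(K_{F'})$, where $\rho$ is the projection to $\RR^n/\mathrm{span}(C_F)\cong\RR^{n-1-i}$. Here $K_F=\{0\}$, and $K_{F'}$ is an open cone with apex $0$ for $F'\subsetneq F$.
\item Transversality to $\tilde S_i$ at $x$ means exactly that $\rho\circ\phi$ is a submersion at $x$. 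Hence for every small neighborhood $W$ of $x$, the set $\rho(\phi(W))$ contains a neighborhood of $0$. It therefore meets every $K_{F'}$, so $W$ meets every $\phi^{-1}(\varphi(C_{F'}))$.
\item Since $F$ has faces of every smaller dimension, this gives $x\in\overline{X_j}$ for all $j\ge i$. That is \ref{def212:it5} in the strong form $X_i\subseteq\overline{X_j}$ for $i\le j$.
\end{itemize}
The closure relations among the $S_i$ inside $\psi(\N(X))$ then come out as a consequence, rather than being needed as an input.
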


\begin{proof}
	We claim that $\mathcal{X}_B$ satisfies \ref{def212:it1}-\ref{def212:it5} of \Cref{def:Strat_top_space} and therefore stratifies $X$. 
	
	Since $\Rp^{n-1}=\cup_{i=0}^{n-1}\tilde{S}_i$ by \Cref{thm:strat-sphere}, we get $\psi(\N(X))=\cup_{i=0}^{n-1}S_i$. Hence, \ref{def212:it1} holds by \eqref{eq:def-Xi} and surjectivity of $\phi$ in \eqref{eq:map-phi}.
	
	 Note that for all $i\in [n]$, $S_i\hookrightarrow \psi(\N(X))$ is a smooth embedding and hence, a submanifold of $\psi(\N(X))$. To show \ref{def212:it2}, we need to show $\phi^{-1}(S_i)$ is a submanifold for all $i\in[n]$. Since, $\phi$ is transversal to $S_i$ for all $i\in [n]$, by the Preimage Theorem (\Cref{thm:preimage_thm}), $X_i=\phi^{-1}(S_i)$ is a submanifold of $X$.
	
	To show \ref{def212:it3}, note that by \Cref{thm:strat-sphere}, $\tilde{S}_i$ is locally closed in $\Rp^{n-1}$. Therefore, each $\tilde{S}_i$ can be written as the set difference of two open sets $U_i$ and $V_i$ in $\Rp^{n-1}$ for all $i=0,\ldots, n-1.$
	Consequently, 
    $$S_i=\tilde{S}_i\cap \psi(\N(X))=(U_i\setminus V_i)\cap \psi(\N(X)).$$ 
    Since $S_i=(U_i\cap \psi(\N(X)))\setminus (V_i\cap \psi(\N(X)))$ is a set difference of two open sets in $ \psi(\N(X))$, $S_i$ is locally closed in $\psi(\N(X))$. Moreover, the preimage of a locally closed set under a continuous map is a locally closed set. Hence, $X_i$ is locally closed for all $i=0,\ldots,n-1$ by \Cref{lem:composedmap}.

	Criterion \ref{def212:it4} holds by construction. Indeed, note that $\phi(X_i)= S_i$ and if $i\neq j$, then \Cref{thm:strat-sphere} gives that $S_i\cap S_j=\emptyset$. Hence, $X_i\cap X_j=\emptyset.$
	
	Finally, we prove \ref{def212:it5}. Since $X$ is smooth and connected, $\pn$ is also smooth and connected. Note that for $i>j$, we have that $S_i\cap \overline{S_j}=\emptyset$. Indeed,  $S_k=\tilde{S_k}\cap \psi(\N(X))$ and $\overline{S_k}\subseteq\overline{\tilde{S_k}}\cap \psi(\N(X))$ for all $k$, together with $\tilde{S_i}\cap \overline{\tilde{S_j}}=\emptyset$ for $i>j$ (by \Cref{thm:strat-sphere}) implies $S_i\cap \overline{S_j}=\emptyset$ for $i>j$. 
    
    Let $x$ be a point in $S_i$ for some $i$ and $S_i\cap \overline{S_j}\neq \emptyset$ for some $j>i.$ Then $x \in \varphi(C_F)$ for some $F\in \F_i$. Assume to the contrary that $x \notin \overline{S_j}$. This implies that $x\notin \overline{\varphi(C_F)\cap \pn}$ for all $F\in \F_j$. Since $\pn$ is connected, then either there exists an open neighborhood $U$ around $x$ such that $\psi(\N(U))$ is not full dimensional or $x$ is a boundary point.  Since $\phi$ is transversal and $X$ contains no boundary points,
    this gives a contradiction.
\end{proof}

The stratification constructed in the above theorem distinguishes the points on $X$ based on the expected dimension of the Voronoi cell of each point. In particular, the Voronoi cones of the points in the stratum $X_i$ have dimension $n-i$ and hence, their corresponding Voronoi cells have dimension at most $n-i$ by \Cref{rem:conedim}.

\begin{remark}
	Note that, for $F\in \F_i$, $C_F$ has dimension $i.$ Hence, so does $\tilde{S}_i$ in the stratification $\tilde{\S}_B$ of $\Rp^{n-1}$. Furthermore, since $\phi$ is an open and continuous map, generically, the strata $X_i$ of $\mathcal{X}$ also has dimension at most $i.$
\end{remark}

\begin{remark} In \Cref{thm:stratgeneral}, we stratify a connected component of a real manifold without boundary. If $X=\sqcup_{i=1}^n M_i$ is a real manifold with finitely many connected components $M_i$ such that each $M_i$ satisfies the conditions of \Cref{thm:stratgeneral}, we define the stratification of $X$ to be the union of stratification of each of the connected component $M_i.$
\end{remark}

\begin{example}\label{ex:torus}
     \begin{figure}[h!]
        \centering
        \includegraphics[width=0.25\linewidth]{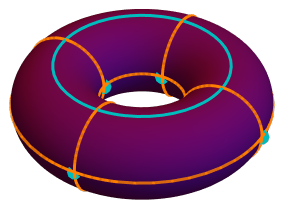}
        \caption{Partition of torus $X$ when the unit ball is the cube as in \Cref{ex:torus}. $X_0$ is the set of points in light blue, $X_1$ is the set of points in orange, and $X_2$ is the set of points in purple. }
        \label{fig:strat_torus}
    \end{figure}

    The transversality hypothesis in \Cref{thm:stratgeneral} is necessary to guarantee that $\mathcal{X}_B$ is a stratification of $X.$ As an example of the behavior that can arise when this hypothesis is not satisfied, consider when $B$ is the  cube, and take the manifold $X$ to be the torus defined by the equation $ (3 + x^2 + y^2 + z^2)^2-16 (x^2 + y^2)=0.$
    
    The strata are
    \begin{equation}
        \begin{split}
            X_0 = &\{(x,y,z)\in \RR^3 \mid z=1 \mbox{ and } x^2+y^2=4\} \cup \{(x,y,z)\in \RR^3 \mid z=-1 \mbox{ and } x^2+y^2=4\}\\
                 &\cup \{(1,0,0),(-1,0,0),(3,0,0),(-3,0,0),(0,1,0),(0,-1,0),(0,3,0),(0,-3-0)\} \\
            X_1 = & (\{(x,y,z)\in \RR^3 \mid z=0 \mbox{ and } x^2+y^2-1=0\} \cup \{(x,y,z)\in \RR^3 \mid z=0 \mbox{ and } x^2+y^2-9=0\} \\
            & \cup \{x=0 \mbox{ and } (y-2)^2 +z^2 -1=0\} \cup \{x=0 \mbox{ and } (y+2)^2 +z^2 -1=0\} \\
            & \cup \{y=0 \mbox{ and } (x-2)^2 +z^2 -1=0\} \cup \{y=0 \mbox{ and } (x+2)^2 +z^2 -1=0\} ) \setminus X_0\\
            X_2 = & X \setminus (X_1 \cup X_0).
        \end{split}
    \end{equation}
    
    These strata are depicted in \Cref{fig:strat_torus}, where $X_0$ is the union of two circles and eight points in blue, $X_1$ is the union of six circles in orange, and $X_2$ is depicted in purple. In particular, $X_0\not\subset \overline{X_1},$ this occurs precisely because the map $\phi$ is not transversal to $S_0$.

\end{example}

 \Cref{thm:stratgeneral} refers to smooth manifolds without boundary. Since smooth real algebraic varieties are manifolds without boundaries, the theorem applies and we use this to show next that each stratified part of codimension-one real algebraic varieties is in fact a semialgebraic set. 

\begin{theorem}
    Let $X=\V(f)$ be a pure codimension-one variety and let $\mathcal{X}_B:=\{X_i\}_{i=0}^{n-1}$ be its stratification. Then, for all $i = 0,\ldots, n-1$, $X_i$ is a semialgebraic set.
\end{theorem}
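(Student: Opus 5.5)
The plan is to write each stratum $X_i$ explicitly as a finite union of sets cut out by polynomial equations and strict inequalities in the coordinates of $v$. For a codimension-one variety $X=\V(f)$, the normal space at a smooth point $v$ is spanned by the gradient $\nabla f(v)$. By \Cref{rmk:type2faces}, $\type(v)=\{F,-F\}$ for the unique face $F$ with $\nabla f(v)\in C_F\cup C_{-F}$. Unwinding \eqref{eq:def-Xi} and \eqref{eq:map-phi}, we have $\phi(v)=\varphi(\nabla f(v))$, and therefore
\[
X_i=\{v\in X \mid \nabla f(v)\neq 0,\ \nabla f(v)\in C_F\cup C_{-F} \text{ for some } F\in\F_i\}.
\]
The first step is to record this identity carefully, using that $\varphi^{-1}(\varphi(C_F))=C_F\cup C_{-F}$ and that the sets $\varphi(C_F)$, $F\in\F_i$, are disjoint by \Cref{lem:disjointimage}.

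The second step is to describe each open cone $C_F$ semialgebraically, in fact polyhedrally. Write $\L=\{\pm a_1,\dots,\pm a_m\}$ for the facet functionals of $B$. The inner normal cone of a face $F$ consists of the vectors $w$ whose minimum over $B$ is attained exactly on $F$. Equivalently, $w$ is a positive combination of the inner facet normals $-a_j$ over the facets $G\supseteq F$; this uses only these normals because $C_F$ is generated by them, by the definition of $C(P,F)$.

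For this description to be a finite system of strict linear inequalities and linear equations, I would instead use the vertex description. Let $V(F)$ be the vertex set of $F$ and $V(B)$ the vertex set of $B$. Then $w\in C_F$ if and only if
\[
\langle w,p\rangle=\langle w,q\rangle \quad \text{for all } p,q\in V(F),\qquad \langle w,p\rangle<\langle w,r\rangle \quad \text{for all } p\in V(F),\ r\in V(B)\setminus V(F).
\]
This is a finite system of linear equalities and strict inequalities in $w$, so $C_F$ is a semialgebraic (indeed polyhedral) set. I expect this translation, showing that the open cone is exactly the set where $F$ is the minimizing face, to be the only step needing care. It follows directly from the definition of the inner normal cone together with \Cref{rem:normalcone}.

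The third step substitutes $w=\nabla f(v)$. Each condition $\langle \nabla f(v),p-q\rangle=0$ or $\langle\nabla f(v),p-r\rangle<0$ is a polynomial equation or inequality in $v$, since the partial derivatives of $f$ are polynomials. Together with $f(v)=0$ and $\nabla f(v)\neq 0$, this shows that $\{v\in X\mid \nabla f(v)\in C_F\}$ is semialgebraic. Finally, $X_i$ is the finite union over $F\in\F_i$ of these sets and the analogous ones for $-F$. Semialgebraic sets are closed under finite unions and intersections, so $X_i$ is semialgebraic.

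A minor point is that \Cref{thm:stratgeneral} concerns the smooth locus. If $X$ has singular points, the strata are defined only on $X\setminus\V(f,\nabla f)$, which is itself semialgebraic, so the argument goes through unchanged. Alternatively, one can invoke Tarski–Seidenberg on the semialgebraic set $\{(v,w)\mid f(v)=0,\ w=\nabla f(v),\ w\in C_F\}$ and project to $v$, but the direct substitution makes the projection unnecessary.
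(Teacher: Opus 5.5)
Your proof is correct and has the same skeleton as the paper's. Both arguments write $X_i$ as the finite union over $F\in\F_i$ of the points of $X$ whose gradient lies in $C_F$; the case $-F$ is absorbed because $\F_i=-\F_i$. Where you differ is in how the condition $\nabla f(v)\in C_F$ is encoded.

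\emph{The paper's route.} It uses the generator description $\nabla f(v)=\sum_j \lambda_j w_j$ with $w_j\in{\rm gens}(C_F)$ and all $\lambda_j>0$. This introduces auxiliary variables, so it needs Tarski--Seidenberg to project them away. Its advantage is that it matches how $C_F$ is actually presented computationally, by facet normals, as in the SageMath snippet.

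\emph{Your route.} You cut out the open cone $C_F$ directly by finitely many linear equalities and strict inequalities, namely those saying that the minimizing face of $B$ is exactly $F$. Substituting $w=\nabla f(v)$ then gives an explicit, quantifier-free basic semialgebraic set, with no projection theorem needed. The step you flagged as delicate is immediate from the definition of $C(P,F)$ (inner normals of supporting hyperplanes meeting $B$ exactly in $F$) plus the fact that a face is determined by its vertex set. The citation of \Cref{rem:normalcone} is not what is needed there.

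A side benefit of your description is that it does not care how many generators $C_F$ has. The paper's write-up tacitly assigns $C_F$ exactly $i$ generators. That is off by one, since $C_F$ has dimension $i+1$. It also fails for non-simplicial cones, such as the vertex cones of the octahedron, which have four generators. The slip is harmless once all of ${\rm gens}(C_F)$ is used.

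Both arguments share the same tacit assumption: that $\nabla f(v)$ spans $N_v$ at smooth points.
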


\begin{proof}
Given a pure codimension-one variety, $X=\V(f)$, to find the set of 
points in strata $X_i$, we note that these are exactly the points on $X$ whose normal vector is in $\bigsqcup_{F\in\F_i}C_F.$ 
Given a face $F$ of dimension $n-i-1$, let $w_1,\ldots, w_i$ be the generators of the normal cone of $F$. A normal vector $v\in C_F$ if and only if $v=\sum_{j=1}^i\lambda_jw_j$ for $\lambda_j>0.$
Thus, the set of points on $X$ that have $F$ as its type are given by the projection of the following set onto the first $n$ coordinates: 
\begin{align*}
\{(x_1,\ldots,x_n,\lambda_1,\ldots,\lambda_i)\in \RR^n\times \RR^i_{> 0} ~\mid~ (x_1,\ldots,x_n) \in X ,~ \{w_1,\ldots,w_i\}= {\rm gens}(C_F) \text{ and } \nabla_xf =\sum_{j=1}^i\lambda_jw_j \}.
\end{align*}
This set is semialgebraic and, therefore, by the Tarski-Seidenberg theorem, its projection is also semialgebraic. Using this we can now write the semialgebraic set describing $X_i$ as:

\begin{align*}\smash{\bigsqcup_{F\in \F_i}} \{(x_1,\ldots,x_n)\in \RR^n ~\mid~ &\exists (\lambda_1,\ldots,\lambda_i)\in \RR^i_{> 0} \text{ such that }(x_1,\ldots,x_n) \in X , \\ & \{w_1,\ldots,w_i\}= {\rm gens}(C_F) \text{ and } \nabla_xf -\sum_{j=1}^i\lambda_jw_j=0 \},\end{align*}
which is a finite union of semialgebraic sets.
\end{proof}

The above description can be used to explicitly compute the stratification of varieties as depicted in the following examples.

\begin{example}
	In \Cref{fig:plane_curves_stratified}, we present the stratification of the curves in \Cref{ex:type_plane_curves} with respect to the polyhedral norm induced by the square. The strata $(V_i)_0$ are depicted in blue, and the strata $(V_i)_1=V_i\setminus (V_i)_0$ are depicted in red for $i=1,2,3,4$. 
	
	\begin{figure}[h!]
		\centering
		\begin{subfigure}[t]{0.22\textwidth}
			\centering
			\includegraphics[scale=0.28]{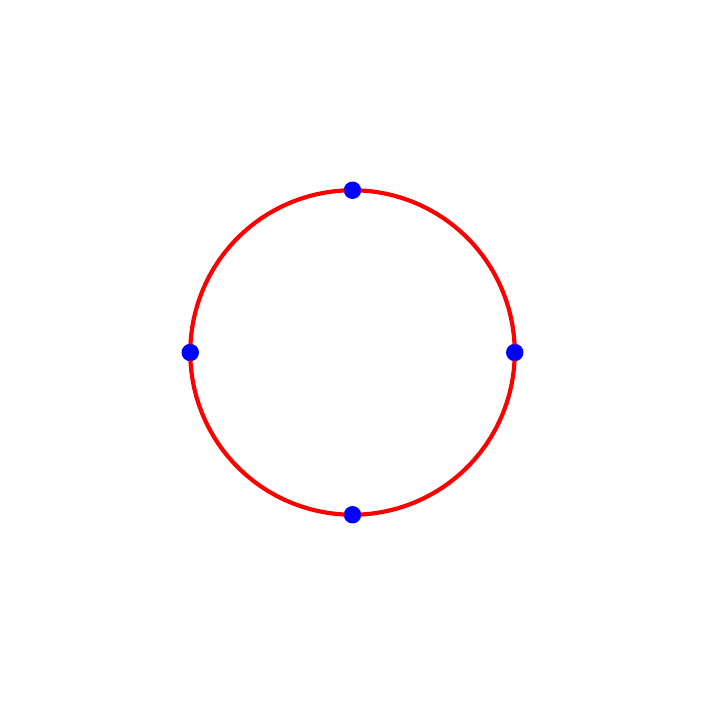}
			\caption{Stratification of $V_1$}
		\end{subfigure}
		\hfill
		\begin{subfigure}[t]{0.22\textwidth}
			\centering
			\includegraphics[scale=0.28]{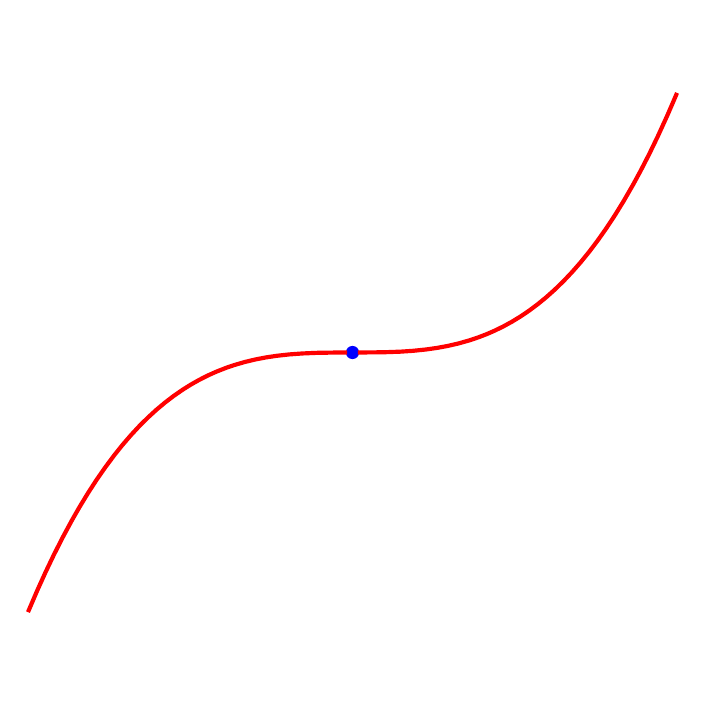}
			\caption{Stratification of $V_2$}
		\end{subfigure}
		\hfill
		\begin{subfigure}[t]{0.22\textwidth}
			\centering
			\includegraphics[scale=0.28]{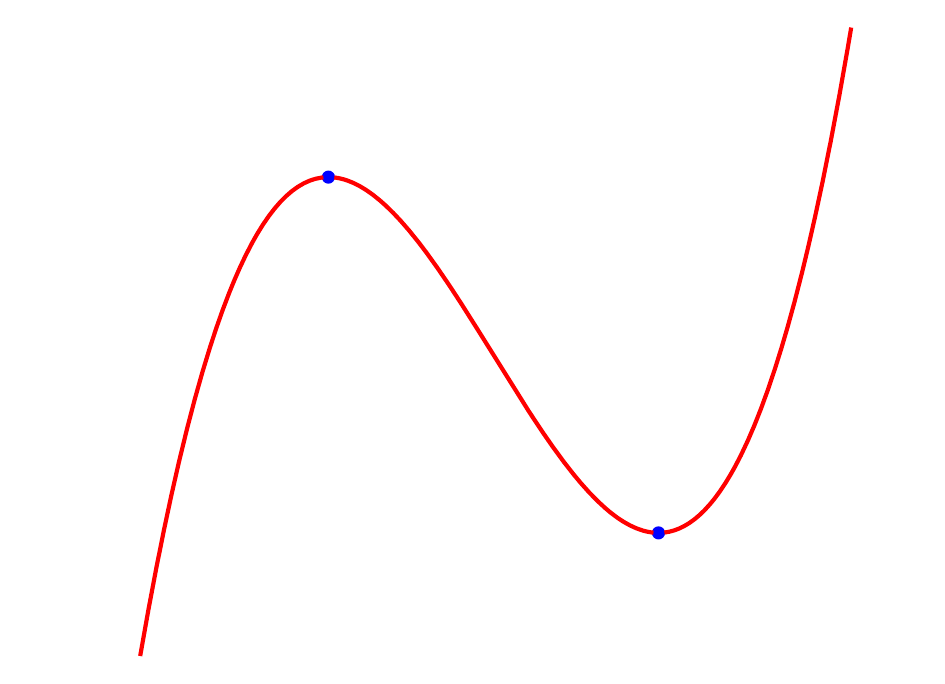}
			\caption{Stratification of $V_3$}
		\end{subfigure}
		\hfill
		\begin{subfigure}[t]{0.22\textwidth}
			\centering
			\includegraphics[scale=0.28]{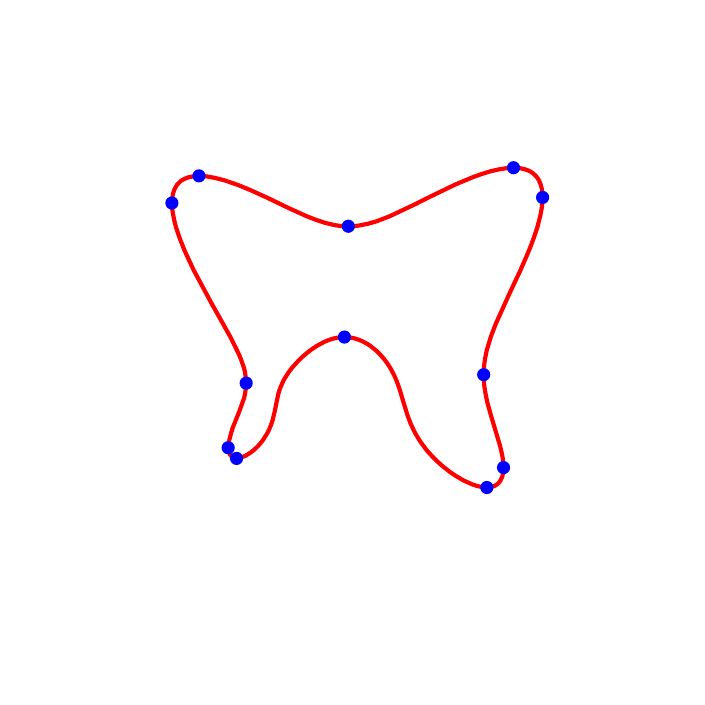}
			\caption{Stratification of $V_4$}
		\end{subfigure}
		\caption{Stratification of the curves in \Cref{ex:type_plane_curves}. For each curve $V_i$, $i=1,2,3,4$, the stratum $(V_i)_0$ is the set of points depicted in blue, and the stratum $(V_i)_1$ is the set of points depicted in red.}
		\label{fig:plane_curves_stratified}
	\end{figure}

\end{example}

\begin{example}
	Consider the hyperboloid $X$ from \Cref{ex:hyperbolic_example} with the norm induced by the cube. The stratification given by \Cref{thm:generalresult} is shown in \Cref{fig:hyperboloid_stratified}. The strata are:
	\begin{equation*}
		\begin{split}
			X_0 = &\{(1,0,0),(-1,0,0),(0,2,0),(0,-2,0)\}, \\
			X_1 = &\big(\{(x,y,z)\in\RR^3 \mid 4x^2+y^2=4 \mbox{ and } z=0 \} \cup \{(x,y,z)\in\RR^3 \mid 9x^2-z^2=9 \mbox{ and } y=0\}  \\ 
			&\ \ \cup \{(x,y,z)\in \RR^3 \mid 9y^2-4z^2=36 \mbox{ and } x=0 \} \big)\setminus X_0,\\
			X_2 = & X\setminus (X_0\cup X_1).
		\end{split}
	\end{equation*}
	These strata are depicted in \Cref{fig:hyperboloid_stratified} in orange, black, and blue, respectively. 
	
	\begin{figure}[h!]
		\centering
		\includegraphics[scale=0.2]{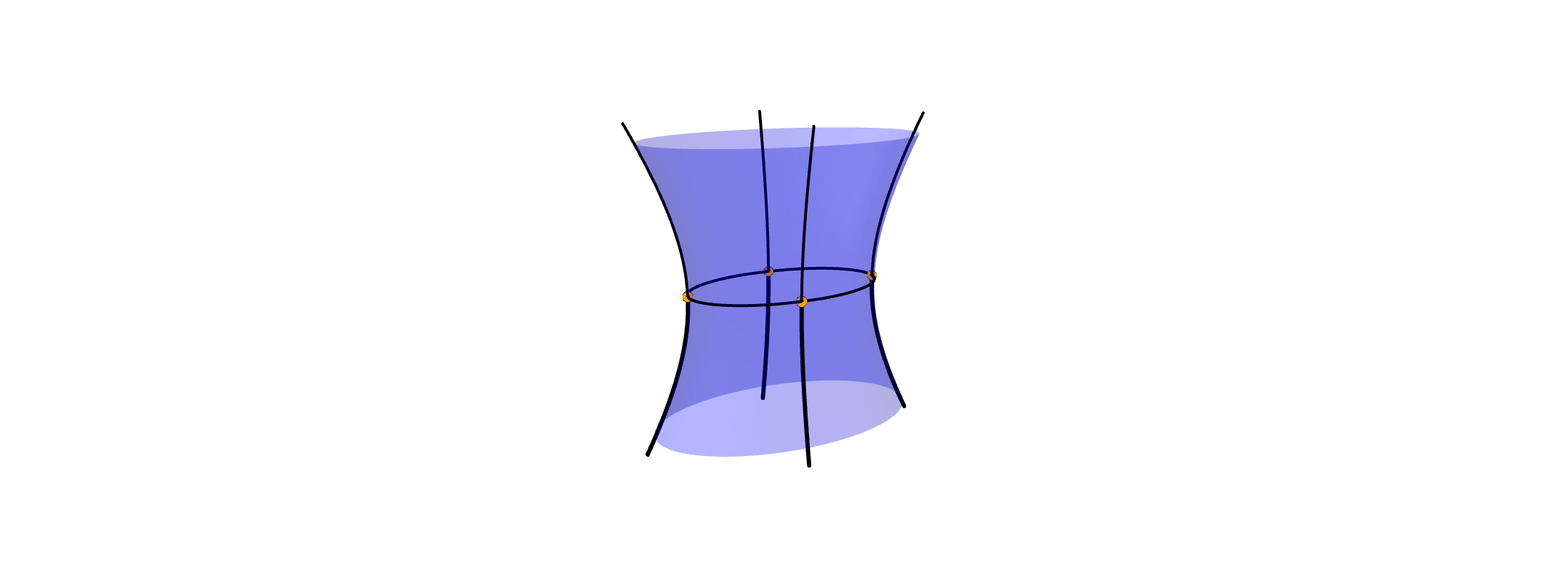}
		\caption{Stratification of the hyperboloid $X = \V(36x^2+9y^2-4z^2-36)$ induced by the polyhedral norm with the cube as its unit ball. The strata $X_0,X_1$, and $X_2$ are depicted in orange, black, and blue, respectively.}
		\label{fig:hyperboloid_stratified}
	\end{figure}
\end{example}

\begin{remark}
	It is possible that for a variety $X$, some of the strata may be empty. This is the case for hyperplanes in $\RR^n$. Moreover, \cite[Theorem 5.1]{BECEDAS2024102229} gives an upper bound on the number of full-dimensional Voronoi cells that a variety can have. This is equivalent to the upper bound on the number of points in the stratum $X_0$ in the stratification from \Cref{thm:stratgeneral}. 
\end{remark}

\section{Medial Axis}\label{sec:MA}

In \Cref{sec:Codim1}, we stratified a codimension-one variety $X \subseteq \mathbb{R}^n$ based on the dimension of the Voronoi cones of points on $X$. We now focus on finding the locus of points in $\RR^n$ whose distance to $X$ is minimized by at least two distinct points on $X$ with respect to a given polyhedral norm, $h$. Such a set is called the \textit{medial axis} and is denoted by $\med(X)$. 

\subsection{Finding the Medial Axis}

We begin by establishing a framework for the computation of an algebraic set that contains $\med(X)$. First, we formally define the medial axis.

\begin{definition} \label{def:medial} Given an algebraic variety $X \subseteq \mathbb{R}^n$ and a polyhedral norm $h$, the \emph{medial axis} of $X$ with respect to $h$ is
	\[\med(X):=\left\{u\in \RR^n ~~\mid~~~ \left|\arg\min_{x\in X} \ h(u-x)\right|> 1\right\}.\]
\end{definition}

\begin{example}\label{ex:med_axis_low_dim}
    For $X=\V(x^2-y)\subset \RR^2$ and the polyhedral norm $h$ with unit ball given by the square as described in \Cref{ex:plane_curve}, 
    \begin{equation*}
        \med(X)=\{(u_1,u_2)\in \RR^2 \mid u_1=0 \mbox{ and } u_2> 0\}.
    \end{equation*}
\end{example}

\begin{example} \label{ex:med_axis_everything_and_nothing}
Let $X = \V(x) \subset \RR^2$ be the line given by $x=0$ in $\RR^2$ and consider the polyhedral norm $h$ with unit ball given by the square as described in \Cref{ex:plane_curve}. Every point $u\in \RR^2 \setminus X$ is optimized by infinitely many points so the medial axis is a semialgebraic set of dimension 2 and is given by 
\[ \med(X) = \RR^2 \setminus X. \]
In this case, the variety $X$ is parallel to one of the facets of the unit ball.
This behavior is not possible in the Euclidean norm unless $X$ contains a circle. 

Alternatively, consider the variety $Y = \V(x + y) \subseteq \RR^2$
with the same polyhedral norm. In this case, no point in $\RR^2 \setminus Y$ is optimized by more than a single point on $Y$. Consequently, $\med(Y) = \emptyset$.
\end{example}

\begin{example}\label{ex:pos_dim_medial_axis}
	Let $X = \V(f)) \subset \RR^2$ where $f=y - 2x^2 + 2x^4$
	and $h$ be the polyhedral norm with unit ball given by a square as described in \Cref{ex:plane_curve}. Here $X$ is given by the blue curve in \Cref{fig:A2}. This curve has a real bitangent given by the line $y=1/2$ which is parallel to the optimizing facet corresponding to any point in the green cone. The two points of tangency, $\left(-\frac{1}{\sqrt{2}},\frac{1}{2}\right)$ and $\left(\frac{1}{\sqrt{2}},\frac{1}{2}\right)$, have two-dimensional Voronoi cells described by the dotted red cones. The medial axis of $X$ is the semialgebraic set of dimension two in green, given by 
	\begin{align*}\med(X)&=\left\{(x,y)~\mid~x - y +\frac{1+\sqrt{2}}{2}  < 0 \text{ and } x + y - \frac{1+\sqrt{2}}{2} > 0\}\cup \{(x,y)~\mid~x=0 \text{ and }y>0\right\}\\ &\cup \left\{(x,y)~\mid~x=0 \text{ and }y<-1\right\} \cup \{(x,y)~\mid~g(x,y)=0,~ f(x,y)<0,~x\leq -0.5 \text{ and }y\leq0.5\}\\ & \cup \{(x,y)~\mid~g(x,y)=0, ~f(x,y)<0, ~x\geq 0.5 \text{ and }y\leq 0.5\}\end{align*}
	where $g(x,y)=256 x^8 - 256 x^6+ 96 x^4- 12x^2+32x^2y- 64x^4 y+4 y^2-4y -1.$
	\begin{figure}[h!]
		\centering
		\includegraphics[scale=0.5]{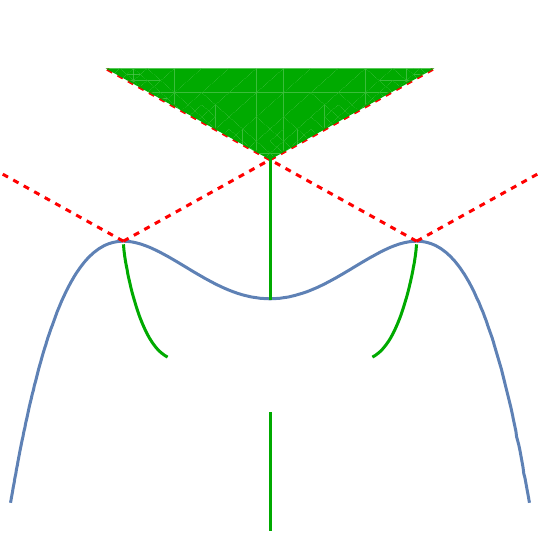}
		\caption{The variety $X$ from \Cref{ex:pos_dim_medial_axis} with full-dimensional medial axis.}
		\label{fig:A2}
	\end{figure}
\end{example}

\Cref{ex:med_axis_low_dim,ex:med_axis_everything_and_nothing,ex:pos_dim_medial_axis} show that the medial axis of a variety $X \subset \RR^n$ can exhibit a range of different behaviors. In particular, it can be empty, full-dimensional, or contain components of different dimensions. 	
In what follows, we focus on finding an algebraic set that contains the medial axis of a codimension-one variety $X=\V(f)$ for some polyhedral norm $h$ whose unit ball is $B.$  

Let $u\in \RR^n\setminus X$ be a point on the medial axis. Then there exist distinct points $x,y \in X$ such that  $h(x-u)=h(y-u)=\lambda$ for $\lambda>0.$ Let $B_\lambda(u)$ intersect $X$ at $x$ and $y$ in faces $F_1\in\F_i$ and $F_2\in\F_j$ of codimension $i$ and $j$ respectively. That is, $F_1$ and $F_2$ are the respective optimizing faces of $x$ and $y$. Assume that $\ell_1, \ldots , \ell_i$ are $i$ linear functionals defining $F_1$ and $\hat{\ell_1}, \ldots , \hat{\ell_j} $ are $j$ linear functionals defining $F_2$.

We define $J_{F_1}\subset \RR[x_1,\ldots,x_n,y_1,\ldots,y_n,u_1,\ldots,u_n,\lambda]$ to be the ideal generated by 

\begin{equation*}
   ( f(x), \ \ell_1(x-u)-\lambda, \ \ldots , \ \ell_i(x-u)-\lambda )
\end{equation*}
and the  $(i+1) \times (i+1)$ minors of the following matrix of dimension $(i+1) \times n$:
\begin{equation*} 
	\begin{bmatrix}
		\nabla_x f &
		\nabla_x \ell_1 &
		\cdots &
		\nabla_x \ell_i
	\end{bmatrix}^{\top}.
\end{equation*}

The above conditions impose that $x\in X$, $u\in\vc(x)$ , $h(x-u)=\l$  and the distance $\lambda$ is realized in the face $F_1$. 
Note that if $F_1$ is a vertex (i.e $i=n$), there are no $(n+1) \times (n+1)$ minors in the matrix defined above, so $J_{F_1}$ does not contain any determinantal equations.
Similarly, define $J_{F_2}$ as the ideal generated by 
\begin{equation*}
   ( f(y), \  \hat{\ell}_1(y - u) - \lambda , \  \ldots, \ \hat{\ell}_j(y - u) - \lambda )
\end{equation*} 
as well as the $(j + 1) \times (j + 1)$ minors of the $(j+1) \times n$ matrix 
\begin{equation*} 
	\begin{bmatrix}
		\nabla_y f &
		\nabla_y \hat{\ell}_1 &
		\cdots &
		\nabla_y \hat{\ell}_j
	\end{bmatrix}^{\top}.
\end{equation*}

Define the ideal 
\begin{equation}\label{eq:incidenceideal}
	J_{(F_1,F_2)}:=J_{F_1}+J_{F_2}.
\end{equation}
and denote by $I_{(F_1,F_2)}\subset \RR[u_1,\ldots,u_n]$ the elimination ideal $J_{(F_1,F_2)}\cap \RR[u_1,\ldots,u_n].$ By construction, $\V(I_{(F_1,F_2)})$ contains the points in $\RR^n$ that are equidistant from two points in $X$ and whose distance is realized in faces $F_1$ and $F_2$. This is precisely the projection of the incidence variety

\begin{align}\begin{split}\label{eq:IV}
	\mathcal{V}_{(F_1,F_2)}:=\{(x,y,u,\lambda)\in \RR^n\times\RR^n\times \RR^n \times \RR~\mid~   &x,y\in X, \  h(x-u)=h(y-u)=\lambda, \\ &F_\l(u,x)=F_1,\ F_\l(u,y)=F_2 \}
\end{split}\end{align}
\noindent onto the coordinates $u_1,\ldots, u_n$ (where we recall that $F_\l(u,v)$ is the face of $B_\lambda (u)$ intersecting $X$ at $v$). 

Observe that taking the union of the varieties $\V(I_{(F_1,F_2)})$ over all faces $F_1$ and $F_2$ yields the set of points that are equidistant to $X$ and satisfy the necessary (though not sufficient) tangency conditions for optimality. We call 
\[ I_{\text{Eq}} = \bigcap_{G\in \F \times F}I_{G} \]
the \emph{equidistant ideal} and $\V(I_{\text{Eq}})$ the \emph{equidistant locus} of $X$.
The equidistant locus of $X$ may be strictly larger than the medial axis of $X$ and its algebraic closure due to the fact that we do not impose minimality conditions on $\lambda$ when generating the ideals $J_{F_1}$ and $J_{F_2}$. We now show that the equidistant locus gives an algebraic set containing the medial axis of $X$.

\begin{theorem}\label{thm:medialaxis}
	Let $X \subseteq \RR^n$ be a smooth codimension-one variety, $h$ be a polyhedral norm whose unit ball has 
    faces $\F$ and 
    \[ I_{\mathrm{Eq}} = \bigcap_{G\in \F \times F}I_{G}\]
    be the equidistant ideal of $X$. Let $M_X$ be the algebraic closure of $\med(X)$. Then
	\[\med(X) \subset M_X\subset \mathbb{V} \left(I_{\mathrm{Eq}} \right).\]
\end{theorem}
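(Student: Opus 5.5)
The first inclusion $\med(X)\subset M_X$ holds by definition of the algebraic (Zariski) closure. For the second inclusion, $\V(I_{\mathrm{Eq}})$ is Zariski closed, so it suffices to show $\med(X)\subset \V(I_{\mathrm{Eq}})$; minimality of $M_X$ among algebraic sets containing $\med(X)$ then gives $M_X\subset \V(I_{\mathrm{Eq}})$. Since $\V(I_{\mathrm{Eq}})=\bigcup_{G}\V(I_G)$ (the variety of an intersection of ideals is the union of their varieties), it is enough to exhibit, for each $u\in\med(X)$, a single pair of faces $G=(F_1,F_2)$ with $u\in \V(I_{(F_1,F_2)})$.

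\textbf{Building the witness.} Fix $u\in\med(X)$ and two distinct minimizers $x\neq y$ in $X$, with $\lambda=d(u,X)=h(x-u)=h(y-u)$. If $u\in X$, then $\lambda=0$ and $u$ is the unique minimizer, so we may assume $\lambda>0$. Let $F_1=F_\lambda(u,x)$ be the smallest face of $B$ such that $x-u\in\lambda F_1$ (relative interior), and similarly define $F_2$. Choose $\ell_1,\dots,\ell_i$ to be the facet functionals active on $F_1$. Then $\ell_k(x-u)=\lambda$ for every $k$, and $f(x)=0$. The same holds for $y$ and $F_2$. Using the conventions of \eqref{eq:incidenceideal}, this shows $(x,y,u,\lambda)$ satisfies all non-determinantal generators of $J_{(F_1,F_2)}$.

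\textbf{Main step: the rank condition.} The key remaining check is that $\nabla f(x)$ lies in the span of $\nabla\ell_1,\dots,\nabla\ell_i$, so that all $(i+1)\times(i+1)$ minors vanish. Argue by Lagrange/KKT: $x$ minimizes the convex function $z\mapsto h(z-u)$ over $X$, a smooth hypersurface. Near $x$, $h(z-u)=\max_k \ell_k(z-u)$ over the active functionals, because the others are strictly smaller. Suppose $\nabla f(x)$ does not lie in $\mathrm{span}\{\ell_k\}$. Then the tangent hyperplane $T_xX=\nabla f(x)^\perp$ is not contained in any hyperplane containing $\bigcap_k\ker\ell_k$ that is orthogonal to the span. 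Equivalently, the restrictions of $\ell_1,\dots,\ell_i$ to $T_xX$ admit a tangent vector $t$ with $\ell_k(t)<0$ for all $k$. To see this, note that the $\ell_k$ restricted to $T_xX$ are not all annihilating a common direction issue; more precisely, the convex cone they generate contains no linear relation forcing $0$, since $\sum\mu_k\ell_k|_{T_xX}=0$ with $\mu_k\ge0$ not all zero would put $\sum\mu_k\ell_k$ in $\mathrm{span}\,\nabla f(x)$. Since $\sum\mu_k\ell_k\neq 0$ (it lies in the inner normal cone of a proper face), this would give $\nabla f(x)\in\mathrm{span}\{\ell_k\}$, a contradiction. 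Gordan's alternative then yields $t$. Take a curve $\gamma$ in $X$ with $\gamma(0)=x$ and $\gamma'(0)=t$; then $h(\gamma(s)-u)<\lambda$ for small $s>0$, contradicting minimality. This argument is the expected obstacle, specifically the careful local reduction of $h$ to the active functionals and the use of smoothness of $X$ at $x$. The same argument applies to $y$.

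\textbf{Conclusion.} Therefore $(x,y,u,\lambda)\in\V(J_{(F_1,F_2)})$. Projecting, $u$ lies in the projection of the incidence variety \eqref{eq:IV}, which is contained in $\V(I_{(F_1,F_2)})$, since every polynomial in the elimination ideal vanishes on projections of zeros of $J$. Hence $u\in\V(I_{\mathrm{Eq}})$, and the theorem follows.
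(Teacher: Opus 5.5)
Your proof is correct and follows the paper's route. Like the paper, you reduce to $\med(X)\subset\V(I_{\mathrm{Eq}})$ using Zariski-closedness, take the two minimizers with their faces $F_1=F_\lambda(u,x)$ and $F_2=F_\lambda(u,y)$, and project a zero of $J_{(F_1,F_2)}$ to $u$.

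The difference is that you actually verify the determinantal (tangency) generators with the Gordan/KKT argument, whereas the paper asserts $(x,y,u,\lambda)\in\mathcal{V}_{(F_1,F_2)}$ and treats the rank condition as holding ``by construction''. Since $\mathcal{V}_{(F_1,F_2)}$ as written does not encode that condition, you should rest the conclusion on $(x,y,u,\lambda)\in\V(J_{(F_1,F_2)})$ rather than on the projection of the incidence variety. You should also delete the garbled sentence before ``more precisely''. Finally, justify $\sum_k\mu_k\ell_k\neq 0$ directly: $(\sum_k\mu_k\ell_k)(x-u)=\lambda\sum_k\mu_k>0$.
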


\begin{proof}
 Note that $\med(X) \subset M_X$ follows from the definition of algebraic closure. 
To show that $M_X \subset \mathbb{V}\left(I_{\mathrm{Eq}}\right)$, we will show that $\med(X) \subset \mathbb{V}\left(I_{\mathrm{Eq}}\right)$ from the definitions of these sets. 
 Since $\mathbb{V}\left(I_{\mathrm{Eq}}\right)$ is closed in the Zariski topology, this will imply that 
$M_X\subset \mathbb{V}\left(I_{\mathrm{Eq}}\right) $. 

To show that $\med(X) \subset \mathbb{V}\left(I_{\mathrm{Eq}}\right)$, let $u \in \med(X)$. Then there exist distinct $x,y \in X$ such that  $ x,y \in \arg\min\limits_{x\in X} \ h(u-x)$. Let $\lambda= h(u-x)=h(u-y)$. Let $F_{i}$ and $F_{j}$ be the faces at which $B_{\lambda}(u)$ intersects $x$ and $y$, respectively. 
Then $(x, y, u, \lambda) \in \mathcal{V}_{(F_1,F_2)}$, so $u \in \mathbb{V} (I_{(F_i, F_i)})$. For any ideals $I$ and $J$, we have $\mathbb{V}( I \cap J)= \V(I) \cup \V(J)$. Thus, $u \in \mathbb{V}\left(I_{\mathrm{Eq}}\right)$, completing the proof. 
\end{proof}

\begin{remark}
	When $X$ is a semialgebraic set, $\med(X)$ is also semialgebraic. The argument is similar to the Euclidean norm case \cite[Proposition 7.1]{breiding2024metric}.
\end{remark}

\begin{remark}\label{rem:extracomponents}
As mentioned earlier, due to the fact that we do not impose minimality conditions in our definition of $I_{\mathrm{Eq}}$, the equidistant locus $ \mathbb{V}\left( I_{\mathrm{Eq}} \right)$  may be strictly larger than the algebraic closure of $\med(X)$. 
In particular, $\V(I_{\mathrm{Eq}})$ may contain additional components. This is illustrated in \Cref{ex: extra components}.
\end{remark}

\begin{example}\label{ex: extra components}
	\begin{figure}[h!]
    \centering
	\begin{subfigure}[t]{0.22\textwidth}
		\centering
		\includegraphics[scale=0.15]{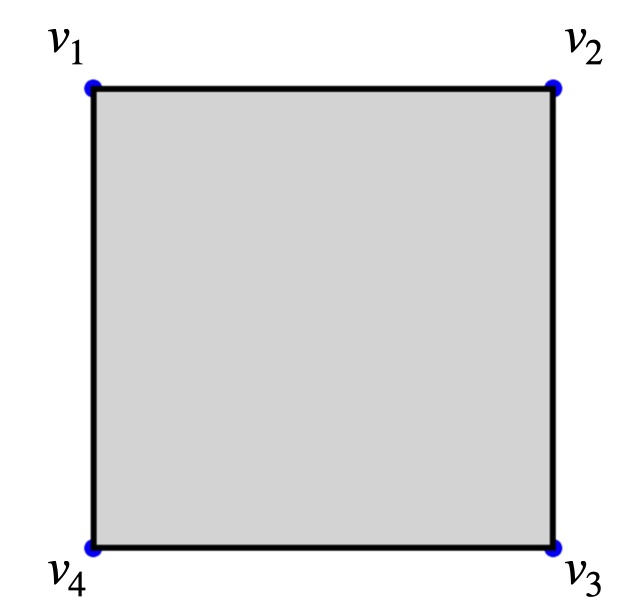}
		\caption{Polyhedral ball with labeled vertices}
        \label{fig:circ_labeled_ball}
	\end{subfigure}
    \hfill
    \begin{subfigure}[t]{0.22\textwidth}
        \centering
        \includegraphics[scale=0.28]{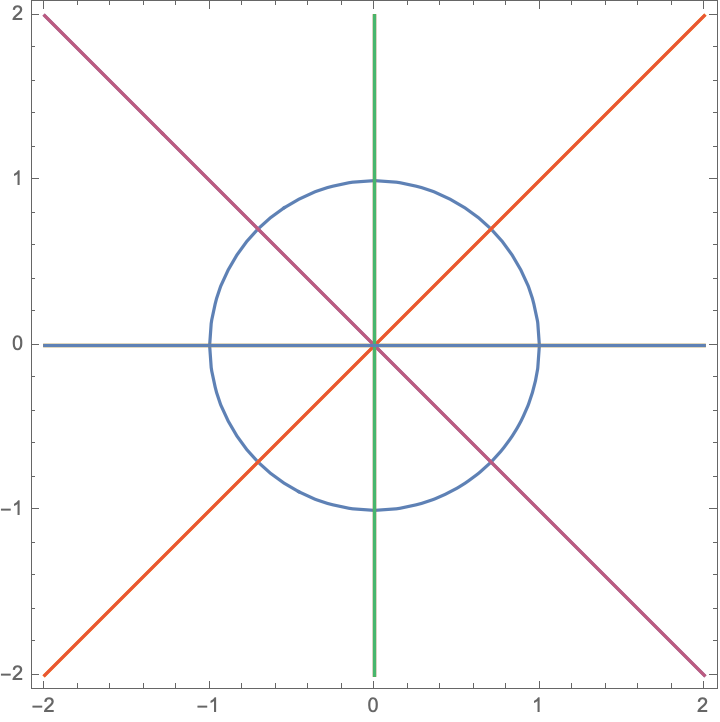}
        \caption{$\V(I_{F_1,F_2})$ when $F_1$ and $F_2$ are vertices}
        \label{fig:circ_v_v}
    \end{subfigure}
    \hfill
    \begin{subfigure}[t]{0.22\textwidth}
        \centering
        \includegraphics[scale=0.28]{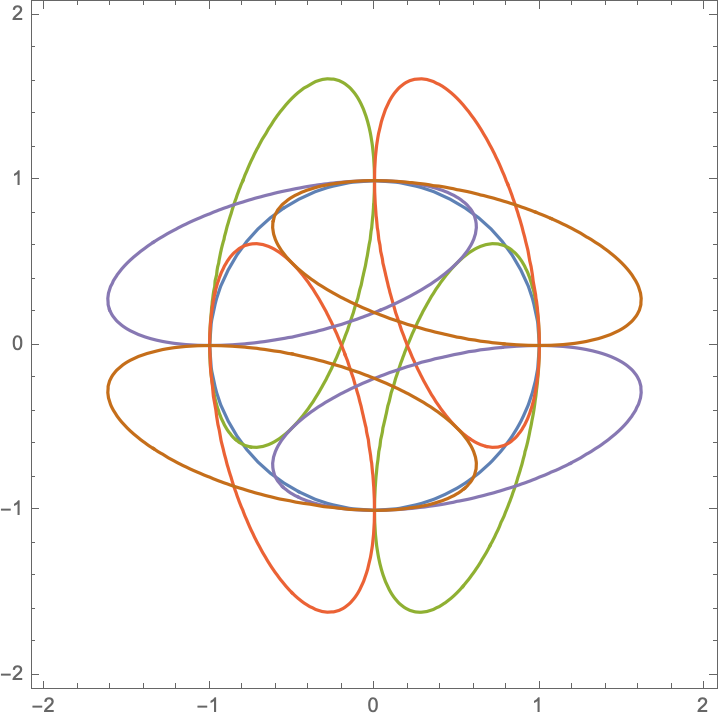}
        \caption{$\V(I_{F_1,F_2})$ when one face is a vertex and one is an edge}
        \label{fig:circ_v_e}
    \end{subfigure}
    \hfill
    \begin{subfigure}[t]{0.22\textwidth}
        \centering
        \includegraphics[scale=0.28]{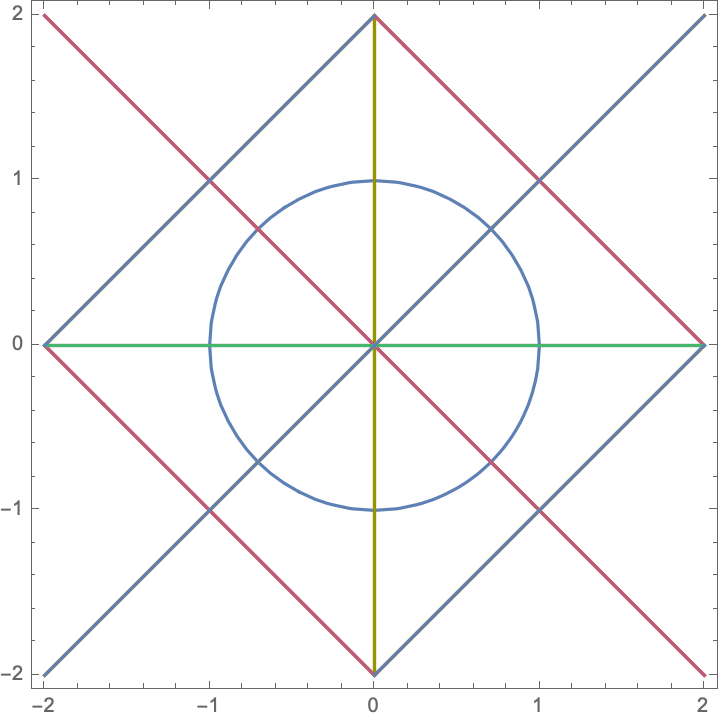}
        \caption{$\V(I_{F_1,F_2})$ when $F_1$ and $F_2$ are edges}
        \label{fig:circ_e_e}
    \end{subfigure}
    \hfill
		\caption{Consider the variety $X = \V(x^2 + y^2 -1) \subset \RR^2$ with respect to polyhedral norm with unit ball given by the square. Each figure represents the algebraic closure of the locus of equidistant points arising from different pairs of faces. The variety $X$ is plotted in all the pictures for reference.
        }
		\label{fig:med_axis_circ}
	\end{figure}

We illustrate the discussion in \Cref{rem:extracomponents} for the case when $X = \V(x^2 + y^2 -1) \subset \RR^2$ and the polyhedral norm has unit ball given by the square as described in \Cref{ex:plane_curve}. The nontrivial faces of a square in $\RR^2$ are either zero or one dimensional.
Therefore, the components of the medial axis are generated with respect to two vertices, a vertex and an edge, or two edges. 

In \Cref{fig:circ_v_v}, we plot the zero set of varieties of $I_{(F_1,F_2)}$ where the pair $(F_1, F_2)$ is a pair of vertices. The green line is the Zariski closure of the component corresponding to face pairs $(v_1,v_2)$ and $(v_3,v_4)$ where the vertices are labeled in \Cref{fig:circ_labeled_ball}. Similarly, the blue line is the zero set of $I_{(v_2,v_3)}$ and $I_{(v_1,v_4)}$. These two lines are in fact the Zariski closures of components of medial axis. 

On the other hand, the orange line, given by $\mathbb{V}(I_{(v_1,v_3)})$, is not a component of the medial axis. For example, the point $u=\left(\frac{1}{2\sqrt{2}},\frac{1}{2\sqrt{2}}\right)$ lies on the orange line and has distance $d(u,X)=1/2$. However, $u$ is equidistant from points $\left(\frac{\sqrt{2}+\sqrt{6}}{4}, \frac{\sqrt{2}-\sqrt{6}}{4}\right)$ and $\left(\frac{\sqrt{2}-\sqrt{6}}{4}, \frac{\sqrt{2}+\sqrt{6}}{4}\right)$ with distance $\frac{\sqrt{3}}{2}>d(u,X)$. 

Similarly, the ellipses in \Cref{fig:circ_v_e} are generated by considering pairs of faces $(F_1,F_2)$ where $F_1$ is a vertex and $F_2$ is an edge. 
Finally, \Cref{fig:circ_e_e} is generated by considering $I_{(F_1,F_2)}$ where $F_1$ and $F_2$ are both edges of the square.
\end{example}

Note that if $\med(X)$ contains a full-dimensional component corresponding to given faces $F_1$ and $F_2$, the ideal $I_{(F_1,F_2)}$ is the zero ideal and its algebraic closure is $\RR^n$. This behavior does not arise very often and we believe it is related to the existence of families of affine spaces that are bitangent to $X$. We state a conjecture for a necessary condition for the existence of a full-dimensional component of the medial axis. 

\begin{conjecture}\label{conj:fulldimcomp}
	Let $X$ be a codimension-one variety and $\X_B=\cup_{i}X_i$ be its stratification with respect to a given polyhedral norm (assuming this stratification exists). If the medial axis has a full-dimensional component, then for some $i$ there exist two open sets $M_{i1},M_{i2}\subset X_i$ such that every point in $M_{i1}$ shares a bitangent affine space with a point in $M_{i2}$ and these spaces are contained in supporting hyperplanes of one of the faces of $B$.
\end{conjecture}

\subsection{Degree Bounds}

Let $X\subset \RR^n$ be a codimension-one variety given by $f(x)=0$ such that $\deg(f)=d$. 
In this setting, we compute degree bounds of $\IF$ and hence, degree bounds on various components of the equidistant locus $\V(I_{\mathrm{Eq}}).$ Throughout the remainder of this section, we assume that the medial axis does not have a full-dimensional component. 
In the case that it does have a full-dimensional component, the ideal is the zero ideal and all of the degree bounds below are trivially satisfied.
We first consider the degree of $\IF$ in the case when $F_1$ and $F_2$ are both vertices of the given polyhedral unit ball. Before we state our main result, we need a lemma bounding the degree of resultants.

\begin{lemma}\label{lem: degree sylvester mat}
Let $K$ be a field and $f(t):=\sum_{i=1}^da_it^i$ and $g(t):=\sum_{i=1}^db_it^i$ be two univariate polynomials
	such that $a_i, b_i \in K[u_1,\ldots,u_n]$ and $\deg(a_i)=\deg(b_i)=d-i$. Then the resultant, ${\res}(f,g) \in K[u_1,\ldots,u_n]$ has degree at most $d^2$.
\end{lemma}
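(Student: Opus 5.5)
The plan is to write $\res(f,g)$ as the determinant of the Sylvester matrix and bound the degree of each term in its Leibniz expansion. Indices run from $1$ to $d$, so we set $a_0=b_0=0$ (assigning them degree $0$ formally; zero entries only lower degrees). Alternatively, $f=t\tilde f$ with $\tilde f$ of degree $d-1$, so the resultant factors. The cleanest route is to use the homogeneity structure: the coefficient of $t^i$ has degree $d-i$ in $u$.

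The key step is a weighting argument. Introduce a new variable $s$ and substitute $u\mapsto s u$, $t\mapsto t/s$. Then $a_i(su)t^i/s^i$ has $u$-weight such that $s^d f(t/s)$ has coefficients $s^{d-i}a_i(su)$, of degree $\le 2(d-i)$ in $s$. A cleaner formulation assigns weight $1$ to each $u_k$ and weight $1$ to $t$. Then every monomial of $a_i(u)t^i$ has weighted degree at most $(d-i)+i=d$, so $f$ and $g$ are polynomials of total degree $\le d$ in $(u,t)$. The Sylvester matrix (size $2d\times 2d$) has entry in row $r$ (shift $r$ of $f$), column $c$, equal to $a_{d-(c-r)}$, of degree $c-r$. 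Explicitly, for the first $d$ rows (copies of $f$) the entry in position $(r,c)$ is $a_{d+r-c}$, of degree $\le c-r$; the same holds for the $g$ rows with index shift $r-d$. For a permutation $\sigma$, the term has degree at most $\sum_{r=1}^{d}(\sigma(r)-r)+\sum_{r=d+1}^{2d}(\sigma(r)-(r-d))$. Since $\sum\sigma(r)=\sum r$, this telescopes to $d\cdot d=d^2$. Hence every nonzero term of the determinant has degree at most $d^2$, and so does $\res(f,g)$.

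I would present it this way: fix the Sylvester matrix convention (coefficients listed from $t^d$ down to $t^0$), check the degree formula $\deg(\text{entry}_{r,c})\le c-r'$ where $r'$ is the row shift, and then do the permutation sum. This is the standard argument that the resultant of two generic homogeneous polynomials of degree $d$ is of degree $d\cdot d$ (Bézout). The main obstacle is bookkeeping. The polynomials might have $a_d$ vanish, making the formal degree drop, so the resultant should be understood as the $2d\times 2d$ Sylvester determinant with respect to formal degree $d$. Also, the missing constant terms $a_0=b_0=0$ make entries zero, which only helps. I would also remark that if the paper intends degree-$(d-i)$ coefficients with $\deg$ meaning at most, the same bound holds.
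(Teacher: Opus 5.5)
Your proof is correct and is essentially the paper's argument. Both note that the Sylvester entry in row $r$ (with shift $r'=r$ for the $f$-rows and $r'=r-d$ for the $g$-rows) and column $c$ has degree $c-r'$, and both sum this over a permutation in the Leibniz expansion to get $\sum_{r=1}^{2d}(\sigma(r)-r)+d^2=d^2$.

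The weighting and substitution detour is not needed. Also, with the sums literally starting at $i=1$, the last column of the Sylvester matrix is zero, so the resultant is identically zero (not merely factored). Your bookkeeping covers the intended version, with sums from $i=0$ as used in Theorem~\ref{thm:vertex_vertex}, without change.
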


\begin{proof}
	Let $T$ be the Sylvester matrix of $f$ and $g$ of size $2d$. Its nonzero entries are:
	\begin{align*}
		T_{i,i+j} &= a_{d-j} ~~ \text{ for } j=0,\dots ,d  \mbox{ and } i=1,\dots, d \\
		T_{i,i-d+j} &= b_{d-j} ~~ \text{ for } j=0,\dots ,d  \mbox{ and } i=d+1,\dots, 2d.
	\end{align*} 
	This implies that 
	\begin{equation} \label{eq:deg_Sylvester}
		\left\{
		\begin{split}
			&\deg_u(T_{i,i+j})=j &\mbox{ for } j=0,\dots ,d  \mbox{ and } i=1,\dots, d \\
			&\deg_u(T_{i,i-d+j}) =j &\mbox{ for } j=0,\dots ,d  \mbox{ and } i=d+1,\dots, 2d.
		\end{split} \right.
	\end{equation}
	Let $\mathcal{S}_{2d}$ be the permutations of $[2d]$. Then ${\res}(f,g)$ is given by
	\[\res(f,g)=\sum_{\sigma \in \mathcal{S}_{2d}} \text{sign}(\sigma) T_{1,\sigma(1)}\cdots T_{d,\sigma(d)}T_{d+1,\sigma(d+1)}\cdots T_{2d,\sigma(2d)}.\]

	Let $\sigma \in \mathcal{S}_{2d}$ be such that any of the conditions below hold for some $i$ 
	
	\[\left\{ 
	\begin{array}{l}
		1\leq i \leq d \quad \mbox{ and } \quad \sigma(i) > i+d \\
		1\leq i \leq d \quad \mbox{ and } \quad \sigma(i) < i \\
		d+1 \leq i \leq 2d \quad \mbox{ and } \quad \sigma(i) > i \\
		d+1 \leq i \leq 2d \quad \mbox{ and } \quad \sigma(i) < i-d
	\end{array}
	\right.\] 
	then we have $T_{1,\sigma(1)}\dots T_{2d,\sigma(2d)}=0$.
	
	We now consider $\sigma\in\mathcal{S}_{2d}$ that does not satisfy any of the conditions above. In other words, we have $i\leq \sigma(i) \leq i+d$ for $i=1,\dots ,d$ and $i-d\leq \sigma(i) \leq i$ for $d+1\leq i \leq 2d$. By \eqref{eq:deg_Sylvester}, we have 
{\allowdisplaybreaks	\begin{align*}
		\begin{split}
			\deg_u(T_{1,\sigma(1)}\dots T_{2d,\sigma(2d)})= & \sum_{i=1}^{d}\deg_u(T_{i,\sigma(i)}) + \sum_{i=d+1}^{2d}\deg_u(T_{i,\sigma(i)})\\
			=& \sum_{i=1}^d (\sigma(i)-i) + \sum_{i=d+1}^{2d} (\sigma(i)-i+d) \\
			=& \sum_{i=1}^{2d} \sigma(i) -2 \sum_{i=1}^d i  \\
			=& \frac{2d(2d+1)}{2} - 2\frac{d(d+1)}{2}=d^2.
		\end{split}
	\end{align*}
}
	This shows that each nonzero term of the determinant of the Sylvester matrix has degree $d^2$. Hence, $\res(f,g)$ is the sum of polynomials of degree $d^2$, and its degree is at most $d^2$. 
\end{proof}

\begin{theorem}\label{thm:vertex_vertex}
	Let $X=\V(f)\subset \RR^n$ be a codimension-one variety and $\deg(f) = d$. Let $F_1$ and $F_2$ be two vertices of the polyhedral unit ball. Then $\IF=\langle f(u)g(u) \rangle$ for some $g\in \RR[u]$ and $\deg(\IF) \leq d^2$. 
    In particular, the component of $\med(X)$ arising from a pair of vertices has degree at most $d^2 - d$.
\end{theorem}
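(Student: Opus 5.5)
When $F_1$ and $F_2$ are vertices I would write everything out explicitly and reduce the elimination to a single resultant. A vertex of $B$ is the intersection of $n$ facets. So the $n$ linear conditions $\ell_k(x-u)=\lambda$, $k=1,\dots,n$, have linearly independent $\ell_k$ and determine $x-u$ uniquely as $\lambda a$, where $a$ is the vertex $F_1$ itself. There are no determinantal equations because $i=n$. Hence
\[ J_{F_1}=\langle f(x),\, x-u-\lambda a\rangle , \qquad J_{F_2}=\langle f(y),\, y-u-\lambda b\rangle, \]
with $b$ the vertex $F_2$. Substituting $x=u+\lambda a$ and $y=u+\lambda b$ eliminates $x$ and $y$ linearly. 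We are left with
\[ \IF=\langle p(\lambda),q(\lambda)\rangle\cap \RR[u], \qquad p(\lambda)=f(u+\lambda a),\quad q(\lambda)=f(u+\lambda b). \]

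Expand $p(\lambda)=\sum_{k=0}^d a_k(u)\lambda^k$ and $q(\lambda)=\sum_k b_k(u)\lambda^k$. Taylor expansion along $a$ shows that $a_k(u)=\frac1{k!}D_a^kf(u)$ has degree at most $d-k$ in $u$, and similarly for $b_k$. The leading coefficients $a_d=f_d(a)$ and $b_d=f_d(b)$ are constants, where $f_d$ is the top form. In the generic case these constants are nonzero, so $p$ and $q$ are monic in $\lambda$ up to a unit.

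By standard elimination theory, the elimination ideal $\langle p,q\rangle\cap\RR[u]$ then has the same radical as $\langle \res_\lambda(p,q)\rangle$, and I would take $\IF$ to be generated by this resultant. Its degree bound is exactly \Cref{lem: degree sylvester mat}, with the indexing shifted to include the constant terms $a_0,b_0$: every nonzero term of the Sylvester determinant has degree at most $d^2$.

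For the factor $f(u)$, note that $a_0=b_0=f(u)$, so $\lambda=0$ is a common root whenever $f(u)=0$. Thus $\V(f)\subseteq\V(\res)$, and since $\res$ vanishes on $\V(f)$ we get $f(u)\mid \res$ (for $f$ reduced and squarefree; over $\RR$ I would argue with the real radical or assume irreducibility). Writing $\res=f\cdot g$ gives $\IF=\langle fg\rangle$ with $\deg g\le d^2-d$. The medial-axis component lies in $\RR^n\setminus X$, so it comes from $u$ with $f(u)\ne 0$. There the common root has $\lambda\neq0$, hence $x\ne y$, and the component sits in $\V(g)$, giving degree at most $d^2-d$.

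\textbf{Main obstacle.} The hardest step is justifying that the elimination ideal is principal and generated by the resultant, rather than just that their radicals agree. This needs the leading coefficients $f_d(a)$ and $f_d(b)$ to be nonzero, and degenerate cases where $f_d$ vanishes at a vertex need separate handling. There I would argue that the degree only drops, since the resultant of lower-degree polynomials has smaller degree. If a vanishing resultant occurs, it corresponds to the excluded full-dimensional case.
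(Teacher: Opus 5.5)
Your proposal follows the paper's proof essentially step for step: you solve the $2n$ linear equations to get $x=u+\lambda A_1^{-1}\mathbf{1}$ and $y=u+\lambda A_2^{-1}\mathbf{1}$ (your $a,b$ are exactly these vertices), bound the $\lambda$-resultant of the two substituted polynomials by \Cref{lem: degree sylvester mat}, split off $f(u)$ using $a_0=b_0=f(u)$, and discard that factor since $f(u)\neq 0$ on $\RR^n\setminus X$; the paper, like you, simply identifies $\IF$ with the ideal generated by this resultant, so it does not resolve the principality issue you flag either. The one difference is the divisibility $f(u)\mid\res_\lambda(p,q)$: the paper gets it directly from the Sylvester matrix, whose column containing $a_0$ and $b_0$ has no other nonzero entries, so $f(u)$ factors out of the determinant with no squarefreeness, real-radical, or irreducibility hypothesis on $f$, and this is cleaner than your vanishing argument.
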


\begin{proof}
	Given vertices $F_1$ and $F_2$ of the unit ball, the ideal $\JF$ as in \eqref{eq:incidenceideal} is given by
	\begin{align}\label{eq:JFsysverts}
		\JF = \langle f(x), f(y), \ell_1(x-u) - \lambda,\ldots, \ell_n(x - u) - \lambda, \hat{\ell}_1(y-u) - \lambda,\ldots, \hat{\ell}_n(y - u) - \lambda \rangle 
	\end{align}
	where $\ell_1,\ldots,\ell_n$ (resp. $\hat{\ell}_1,\ldots,\hat{\ell}_n$) correspond to the linear functionals defining the face $F_i$ (resp. $F_j$). 
	Observe that this ideal is defined by $2n$ linear polynomials and two polynomials of degree $d$. We use Gaussian elimination to solve for $x$ and $y$ using the linear equations. Since the $\ell_i$ and $\hat{\ell}_i$ are linear functionals, they can be expressed in terms of the inner product as 
	\[\ell_i(x)=\langle l_i,x \rangle, \qquad \hat{\ell}_i(y)=\langle \hat{l}_i,y \rangle  \]
	such that $l_i, \hat{l}_i \in \Rd$ for all $i$.
	Define the $n\times n$ matrices $A_1$ and $A_2$ that have $\{l_1,\dots ,l_n\}$  and $ \{ \hat{l}_1,\dots,\hat{l}_n\} $ as rows respectively. Note that these matrices are invertible, since the vectors $\{l_1,\dots ,l_n\}$ (resp. $ \{ \hat{l}_1,\dots,\hat{l}_n\} $) are linearly independent by construction. We write the system of linear equations in \eqref{eq:JFsysverts} as
	\[\begin{bmatrix}
		A_1 & 0 & -A_1 & - \mathbf{1} \\
		0 & A_2 & -A_2 & -\mathbf{1}
	\end{bmatrix} 
	\begin{bmatrix}
		x \\
		y \\
		u \\
		\lambda 
	\end{bmatrix} = 0
	\]
	where $\mathbf{1}$ denotes the column vector with entries equal to $1$. Solving for $x$ and $y$, we get:
	\[x=u+A_1^{-1}\mathbf{1}\lambda \qquad \mbox{and} \qquad y=u+A_2^{-1}\mathbf{1}\lambda. \]
	 Substituting this into $f(x)$ and $f(y)$, we get $J_{u,\l}=\JF \cap R[u,\l]=\langle g_1(u,\l),g_2(u,\l)\rangle$ where
	 \begin{equation*}
	 	\begin{split}
	 		g_1(u,\lambda)\coloneqq & f(u+A_1^{-1}\mathbf{1}\lambda) = a_d \lambda^d + a_{d-1}\lambda^{d-1} + \ldots + a_0,\\
	 		g_2(u,\lambda)\coloneqq & f(u+A_2^{-1}\mathbf{1}\lambda) = b_d \lambda^d + b_{d-1}\lambda^{d-1} + \ldots + b_0.
	 	\end{split}
	 \end{equation*}
	where $a_i$ and $b_i$ are degree $d-i$ polynomials in the variables $u_1,\ldots,u_n$. 
	Since $\IF=J_{u,\l}\cap \RR[u]$, using \Cref{lem: degree sylvester mat}, $\IF$ is an ideal of a polynomial of degree at most $d^2$.  Furthermore, since $a_0=g_1(u,0)=f(u)=g_2(u,0)=b_0$, the determinant of the Sylvester matrix of $g_1$ and $g_2$ computed using the first column shows that $f(u)$ is a factor of ${\rm res}(g_1,g_2)$. Hence, $\IF=\langle f(u)g(u)\rangle$ for some polynomial $g(u)\in \RR[u]$ such that $\deg(g)\leq d^2-d.$
    
    Finally, after saturating the ideal with $f(u)$, we get that the component of $\med(X)$ arising from a pair of vertices has degree at most $d^2 - d$.
\end{proof}

Next, we consider the degree of $\IF$ when $F_1$ is a vertex and $F_2$ is a facet of the polyhedral ball.

\begin{theorem}\label{thm:vertex-facet}
	Let $X=\V(f)\subset \RR^n$ and $\deg(f) = d$. Let $F_1$ be a vertex and $F_2$ be a facet of the polyhedral ball. Suppose that $X$ has $m$ points with tangent spaces parallel to $F_2$. Then $\IF=I_1\cap\ldots\cap I_m$ is the intersection of $m$ ideals and $\deg(I_k)\leq d$ for all $k=1,\ldots,m.$
\end{theorem}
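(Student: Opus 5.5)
We will follow the same elimination strategy as in the proof of \Cref{thm:vertex_vertex}, now with the vertex and facet treated asymmetrically.

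\textbf{Setting up the ideal.} For the vertex $F_1$, the ideal $J_{F_1}$ consists of $f(x)$ and $n$ linear equations $\ell_k(x-u)=\lambda$, $k=1,\ldots,n$. As before, these solve to $x=u+A_1^{-1}\mathbf{1}\lambda$.

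For the facet $F_2$, defined by a single functional $\hat\ell(y)=\langle \hat l,y\rangle$, the ideal $J_{F_2}$ is generated by $f(y)$, $\hat\ell(y-u)-\lambda$, and the $2\times 2$ minors of the $2\times n$ matrix with rows $\nabla_y f$ and $\hat l$. Those minors say exactly that $\nabla_y f$ is parallel to $\hat l$, i.e.\ that $T_yX$ is parallel to $F_2$.

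\textbf{Decoupling $y$ from $u$.} The equations $f(y)=0$ together with the minors involve only $y$, not $u$ or $\lambda$. Under the hypothesis that $X$ has exactly $m$ points with tangent space parallel to $F_2$, the $y$-part of the variety is a finite set $\{p_1,\ldots,p_m\}$. I would therefore decompose $\JF$ (or at least its radical and its real zero set) as an intersection $\bigcap_k \bigl(J_{F_1}+\langle y-p_k\rangle+\langle \hat\ell(p_k-u)-\lambda\rangle\bigr)$, using that the ideal generated by the $y$-equations has zero set $\{p_k\}$. Eliminating then commutes with this finite decomposition at the level of varieties, so $\IF=I_1\cap\cdots\cap I_m$, with $I_k$ the elimination ideal of the $k$-th piece.

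\textbf{Computing each $I_k$.} In the $k$-th piece we have $\lambda=\hat\ell(p_k)-\hat\ell(u)$, which is linear in $u$. Substituting gives
\[
x=u+A_1^{-1}\mathbf{1}\bigl(\hat\ell(p_k)-\hat\ell(u)\bigr),
\]
an affine-linear function of $u$. Hence $I_k=\langle f\bigl(u+A_1^{-1}\mathbf{1}(\hat\ell(p_k)-\hat\ell(u))\bigr)\rangle$, a principal ideal generated by the composition of $f$ with an affine map. That generator has degree at most $d$, which gives $\deg(I_k)\le d$. Geometrically, $\V(I_k)$ is the preimage of $X$ under an affine map, which makes the bound transparent.

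\textbf{Main obstacle.} The delicate step is justifying the decomposition of $\JF$ as an intersection, rather than only of its variety. The ideal in $y$ may be non-radical, for instance when $p_k$ is a degenerate tangency, and the $p_k$ may be only the real points while complex points also exist. I would address this by interpreting the statement up to radical / real zero sets, as the rest of the section effectively does. Concretely, I would work with $\sqrt{\JF}$ and use that the primary decomposition of a zero-dimensional ideal in the $y$-variables separates the points, so that extension and elimination of each primary component behave as above. If $p_k$ is not a coordinate-rational point, the generator has coefficients in a field extension; the degree bound is unaffected.
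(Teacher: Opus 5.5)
Your proposal is correct and follows essentially the same route as the paper. Both arguments split $\JF$ according to the $m$ tangency points $z_k$ for $F_2$, solve the vertex equations as $x=u+A^{-1}\mathbf{1}\lambda$, and eliminate $\lambda$ using the linear generator $\hat\ell(z_k-u)-\lambda$. The paper takes the resultant in $\lambda$, whereas you substitute $\lambda=\hat\ell(z_k)-\hat\ell(u)$ directly; this gives the same polynomial up to sign, and your discussion of justifying the ideal-theoretic decomposition goes beyond what the paper itself verifies.
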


\begin{proof}
Let $X$ be the given variety and $\ell_1, \dots , \ell_n$ be $n$ linear functionals describing $F_1$. Let $z_1,\ldots,z_m\in X$ be the fixed $m$ points with tangent spaces parallel to $F_2$ and let $\hat{\ell}$ be the functional corresponding to $F_2$. The ideal $\IF$ corresponds to the variety of points that are equidistant from two points with optimizing faces $F_1$ and $F_2$. Since $F_2$ belongs to the $\type$ of only $m$ points on $X$, $\IF$ is given by: 

\begin{equation}
    \IF=\bigcap_{k=1}^m (J_{\{F_1,z_k\}}\cap \RR[u])
\end{equation}
where $J_{\{F_1,z_k\}}\subset \RR[x,u,\l]$ is given by
\begin{align*}
	J_{\{F_1,z_k\}}:= \langle f(x),\ell_1(x-u)-\lambda, \ldots, \ell_n(x-u) - \lambda, \hat{\ell}(z_k-u)-\lambda \rangle.
\end{align*}

For an arbitrary $k$, let $I_k:=J_{\{F_1,z_k\}}\cap \RR[u]$. To prove the result, we need to show that $\deg(I_k)\leq d$ for all $k=1,\ldots, m.$ To that end, after solving $n$ linear equations in $J_{\{F_i,z_k\}}$ for $x$, we get,
\[J_{u,\l}^{(k)}:=J_{\{F_1,z_k\}}\cap \RR[u,\l]=\langle f(u+A^{-1}\mathbf{1}\lambda), \hat{\ell}(z_k-u)-\lambda \rangle\]
where $A$ is a matrix of size $n$ with rows given by the $l_1,\ldots,l_n$ for $l_i$ linearly independent, such that $\ell_i(x)=\langle l_i,x \rangle.$ 
Finally, $I_k=J_{u,\l}^{(k)}\cap \RR[u]$ is generated by the resultant of $f(u+A^{-1}\mathbf{1}\lambda)$ and $ \hat{\ell}(z_k-u)-\lambda$ with respect to $\l$. Since $\deg(f)=d$ and $\hat{\ell}$ is a linear functional, $\deg(I_k)\leq d$ for all $k=1,\ldots,m.$
\end{proof}

Next, we consider the degrees of $\IF$ with respect to two facets of the unit ball. 

\begin{theorem}\label{thm:facet-facet}
Let $X=\V(f)\subset \RR^n$ and $\deg(f) = d$. Let $F_1$ and $F_2$ be two distinct facets of the polyhedral ball such that $X$ has $s$ points with tangent spaces parallel to $F_1$ and $t$ points with tangent spaces parallel to $F_2$. Then $\deg(\IF)\leq st$.
\end{theorem}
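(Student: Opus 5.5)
We follow the strategy used for \Cref{thm:vertex-facet}: decompose the equidistant locus by the finitely many points of tangency. For a facet $F$, the inner normal cone $C_F$ is a single ray. So a point $v\in X$ can have $F$ as an optimizing face, or more precisely as a proper optimizing face, only if $N_v$ contains that ray, i.e.\ only if $T_vX$ is parallel to $F$. By hypothesis there are exactly $s$ such points $z_1,\ldots,z_s$ for $F_1$ and $t$ such points $w_1,\ldots,w_t$ for $F_2$. Writing $\ell$ and $\hat{\ell}$ for the functionals defining $F_1$ and $F_2$, the $1\times 1$-extended minors in $J_{F_1}$ (the $2\times 2$ minors of $[\nabla_x f,\ \nabla_x\ell]^{\top}$) together with $f(x)=0$ cut out precisely the finitely many points $x\in\{z_1,\ldots,z_s\}$. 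The same holds for $y$ and the $w_l$. We would therefore first record the decomposition
\[
\IF=\bigcap_{k=1}^{s}\bigcap_{l=1}^{t}\bigl(J_{\{z_k,w_l\}}\cap\RR[u]\bigr),\qquad J_{\{z_k,w_l\}}:=\langle \ell(z_k-u)-\lambda,\ \hat{\ell}(w_l-u)-\lambda\rangle\subset\RR[u,\lambda].
\]
This holds at least up to radical, which suffices for degree, i.e.\ the degree of the variety.

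Next we treat a fixed pair $(z_k,w_l)$. Eliminating $\lambda$ from the first equation gives the single affine-linear equation
\[
\ell(z_k-u)-\hat{\ell}(w_l-u)=0,\quad\text{that is}\quad (\hat{\ell}-\ell)(u)=\hat{\ell}(w_l)-\ell(z_k),
\]
so each $I_{k,l}:=J_{\{z_k,w_l\}}\cap\RR[u]$ is generated by one polynomial of degree at most $1$. Since $F_1\neq F_2$ are distinct facets, $\ell\neq\hat{\ell}$, and this is a genuine hyperplane rather than the zero ideal or the unit ideal. This is consistent with our standing assumption that there is no full-dimensional component.

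Finally, $\V(\IF)$ is the union of the $st$ hyperplanes $\V(I_{k,l})$. Since degree is subadditive under unions (equivalently, $\IF$ contains the product of $st$ linear forms), we get $\deg(\IF)\leq st\cdot 1=st$.

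The main obstacle is the first step: justifying rigorously that the minor conditions force $x\in\{z_k\}$ and $y\in\{w_l\}$. Over $\RR$, the $2\times 2$ minors vanishing only says that $\nabla f(x)$ is parallel to $l$. We must check that the points counted in the hypothesis are exactly these solutions, including possibly singular points where $\nabla f=0$. We would handle this by interpreting ``$X$ has $s$ points with tangent spaces parallel to $F_1$'' as saying that $\V(f,\text{minors})$ consists of exactly those $s$ points, as in the proof of \Cref{thm:vertex-facet}. With this reading the intersection formula is immediate.
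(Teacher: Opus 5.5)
Your proposal is correct and follows the paper's proof essentially step for step. The paper also fixes tangency points $p\in\{p_1,\ldots,p_s\}$ and $q\in\{q_1,\ldots,q_t\}$, reduces $\JF$ to $\langle \ell_1(p-u)-\lambda,\ \ell_2(q-u)-\lambda\rangle$, eliminates $\lambda$ to get the single linear generator $\ell_1(p-u)-\ell_2(q-u)$, and intersects over the $st$ pairs. Your extra remarks only make explicit what the paper assumes silently: that $\ell\neq\hat{\ell}$ makes each piece a genuine hyperplane, and that the decomposition holds up to radical once $\V(f,\text{minors})$ is read as exactly the $s$ (resp.\ $t$) tangency points, which excludes singular points.
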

\begin{proof}
	Let $F_1, F_2$ be defined by linear functionals $\ell_1$ and $\ell_2$ respectively. Then, 
	\begin{align*}
		\JF &= \langle f(x), f(y), \ell_1(x - u) - \lambda, \ell_2(y-u) - \lambda, \ \text{rank}(M_1(x)) = 1, \ \text{rank}(M_2(y)) = 1 \rangle
	\end{align*}
	where $M_1(x)$ and $M_2(y)$ are defined as
	\begin{align*}
		M_1(x) = \begin{bmatrix}
			(\nabla f)(x) \\
			(\nabla \ell_1)(x)
		\end{bmatrix}, \qquad M_2(y) = \begin{bmatrix}
			(\nabla f)(y) \\
			(\nabla \ell_2)(y)
		\end{bmatrix}.
	\end{align*}
	
	Denote the set of points in $X$ tangent to a translate of $F_1$ as $p_1,\ldots,p_s$ and those tangent to a translate of $F_2$ as $q_1,\ldots,q_t$. Observe that for fixed $p \in \{p_1,\ldots, p_s\}$, $f(p) = 0$ and $\text{rank}(M_1(p)) = 1$ and similarly for fixed $q \in \{q_1,\ldots, q_t\}$, $f(q) = 0$ and $\text{rank}(M_2(q)) = 1$. Therefore, for such fixed $p,q$, the ideal $I$ becomes
	\[ J_{p,q} = \langle \ell_1(p-u) - \lambda, \ell_2(q - u) - \lambda  \rangle.  \]
	
	We can then project this ideal onto the $u$ coordinates by eliminating $\lambda$. We get
	\[I_{p,q}:=J_{p,q}\cap \RR[u]= \langle \ell_1(p-u) - \ell_2(q-u) \rangle. \]
	Hence, $\IF=\bigcap_{p,q}I_{p,q}$ where $p \in \{p_1,\ldots, p_s\}$ and $q \in \{q_1,\ldots,q_t\}$. Since $I_{p,q}$ has degree one and $\V(I_{p,q})$ is a hyperplane, we get $\deg(\IF)\leq st$ and  $\V(\IF)$ consists of at most $st$ hyperplanes.
\end{proof}

\begin{remark}[$n=2$]
	For codimension-one varieties in $\RR^2$, \Cref{thm:vertex_vertex}, \Cref{thm:vertex-facet}, and \Cref{thm:facet-facet} cover all possible cases and we have bounds for all the components of the equidistant locus.
\end{remark}

Finally, we conclude this section by giving a degree bound for quadratic varieties.

\begin{proposition}
	Let $X=\V(f)$ be such that $\deg(f) = 2$. Then $\deg(\IF)\leq 4$ for any two faces $F_1$ and $F_2$ of the unit ball.
\end{proposition}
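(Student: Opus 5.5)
The plan is to reduce every pair of faces $(F_1,F_2)$ to a parametrized description of the two tangency points and then eliminate $\lambda$ with a resultant, as in \Cref{thm:vertex_vertex}. Fix faces $F_1\in\F_i$ and $F_2\in\F_j$, cut out by functionals $\ell_1,\dots,\ell_i$ and $\hat\ell_1,\dots,\hat\ell_j$.

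For the first point, the linear equations $\ell_k(x-u)=\lambda$ for $k=1,\dots,i$ say that $x$ lies in the affine space
\[
x\in u+\lambda a_1+L_1,
\]
where $a_1$ is a fixed point of the affine hull of $F_1$ and $L_1$ is its $(n-i)$-dimensional direction space. The rank condition on the matrix of gradients says that $\nabla f(x)$ lies in the span of the $\nabla\ell_k$, i.e.\ $\nabla f(x)\perp L_1$. Since $f$ is quadratic, write $f(x)=x^{\top}Qx+b^{\top}x+c$ and parametrize $x=u+\lambda a_1+Ws$, where $W$ is a basis matrix of $L_1$. The tangency condition then becomes the linear system
\[
W^{\top}\bigl(2Q(u+\lambda a_1+Ws)+b\bigr)=0
\]
in $s$. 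Its coefficient matrix $W^{\top}QW$ does not depend on $u$ or $\lambda$. So, generically, Cramer's rule gives $s$ as an affine-linear function of $(u,\lambda)$, and hence $x$ is affine-linear in $(u,\lambda)$. Substituting into $f$ gives a polynomial
\[
g_1(u,\lambda)=f(x(u,\lambda)),
\]
which is quadratic in $(u,\lambda)$, with the coefficient of $\lambda^k$ of degree $2-k$ in $u$. The second point gives $g_2$ in the same way. This is exactly the setup of \Cref{lem: degree sylvester mat} with $d=2$, so $\res_\lambda(g_1,g_2)$ has degree at most $4$, and $\IF$ is generated by it, or is contained in the radical of its ideal. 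This gives $\deg(\IF)\le 4$.

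The main obstacle is the degenerate case, where $W^{\top}QW$ is singular. Then the tangency condition is either inconsistent, which makes the ideal trivial, or it leaves a positive-dimensional family of solutions $s$. I would handle this by splitting $L_1$ into $\ker(W^{\top}QW)$ and a complement. On the kernel directions $f$ restricted to $x$ is affine, and the tangency equations then impose linear conditions on $(u,\lambda)$ directly. Eliminating these either reproduces the facet–facet situation of \Cref{thm:facet-facet}, which gives hyperplanes (with $s,t\le 2$, so degree at most $4$), or reduces $g_1$ to a polynomial of lower degree in $\lambda$, where the resultant bound only improves. As in the rest of the section, if the elimination ideal is zero because of a full-dimensional component, the bound holds trivially.
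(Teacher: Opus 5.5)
Your proposal is correct and follows essentially the same route as the paper: because $\nabla f$ is linear, the rank conditions together with the equations $\ell_k(x-u)=\lambda$ form a linear system determining $x$ and $y$ as affine functions of $(u,\lambda)$, and the bound then follows from \Cref{lem: degree sylvester mat} with $d=2$ applied to $f(x(u,\lambda))$ and $f(y(u,\lambda))$. The paper simply asserts that the resulting $2n$ equations are linearly independent, which is exactly your nondegeneracy condition $\det(W^{\top}QW)\neq 0$ for both faces, so your treatment of the singular case goes beyond what the paper's proof does, though it is only sketched.
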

\begin{proof}
	Let $F_i$ (resp. $F_j$) be defined by linear functionals $\ell_1,\ldots, \ell_i$ (resp. $\hat{\ell}_i,\ldots, \hat{\ell}_j$). Then  
	{\small \[ \JF = \langle f(x), f(y), \ell_1(x - u) - \lambda, \ldots, \ell_i(x-u) - \lambda, \hat{\ell}_1(y-u) - \lambda, \ldots, \hat{\ell}_j(y-u) - \lambda, \text{rank}(M_1(x)) = i, \text{rank}(M_2(y)) = j  \rangle \]} 
	where 
	\begin{align*} M_1(x) &= \begin{bmatrix}
		(\nabla f)(x) &
		(\nabla \ell_1)(x) &
		\cdots &
		(\nabla \ell_i)(x)
	\end{bmatrix}^\top, \\ M_2(y) &= \begin{bmatrix}
		(\nabla f)(y) &
		(\nabla \hat{\ell}_1)(y) &
		\cdots &
		(\nabla \hat{\ell}_j)(y)
	\end{bmatrix}^\top.\end{align*}
	Observe that since $\deg(f) = 2$, $\nabla f$ is linear in $x$ (or $y$) and hence, the rank constraints are linear. In particular, there are $n-i$ linearly independent equations in $x$ given by $\text{rank}(M_1(x)) = i$ and $n-j$ linearly independent equations in $y$ given by $\text{rank}(M_2(y)) = j$. Using these linear equations along with $\ell_m(x-u) - \lambda, \hat{\ell}_k(y-u) - \lambda $ for $m \in [i]$, $k \in [j]$ gives $2n$  linearly independent equations. By performing Gaussian elimination, we get $x_i = a_i(u,\lambda)$ and $y_i = b_i(u,\lambda)$ where $a_i$ and $b_i$ are linear functions of $u,\lambda$.  Consequently,
	\[ J_{u,\lambda} := \JF\cap \RR[u,\l]=\langle \tilde{f}_1(u,\lambda), \tilde{f}_2(u,\lambda) \rangle \]
	where $\tilde{f}_1(u,\lambda) = f(a_1(u,\lambda),\ldots,a_n(u,\lambda))$ and $\tilde{f}_2 = (b_1(u,\lambda),\ldots,b_n(u,\lambda))$. 
	Finally, $\IF= J_{u,\lambda}\cap \RR[u]$ and by \Cref{lem: degree sylvester mat}, we get $\deg(\IF)=2^2 = 4$.
\end{proof}

\subsection*{Acknowledgements}
{\small We thank Kathlén Kohn and Lorenzo Venturello for discussions during the early phases of the project. NK has been funded by the European Union under the Grant Agreement no. 101044561, POSALG. Views and opinions expressed are those of the authors only and do not necessarily reflect those of the European Union or European Research Council (ERC). Neither the European Union nor ERC can be held responsible for them. The author Eliana Duarte was partially supported by Centro de Matemática Universidade do Porto, member of LASI, which is financed by national funds through FCT – Fundação para a Ciência e a Tecnologia, I.P., under the projects with reference UID/00144/2025 and associated DOI given by https://doi.org/10.54499/UID/00144/2025.}

\bibliography{dist-opt}

\vspace{1cm}

\noindent
\footnotesize {\bf Authors' addresses:}

\smallskip

\noindent Eliana Duarte, Universidade do Porto
 \hfill  {\tt eliana.gelvez@fc.up.pt}

\noindent Nidhi Kaihnsa, University of Copenhagen
 \hfill  {\tt nidhi@math.ku.dk}

\noindent Julia Lindberg, Georgia Institute of Technology
 \hfill  {\tt jlindberg3@math.gatech.edu}

\noindent Angélica Torres, Universidad Técnica Federico Santa María
 \hfill  {\tt angelica.torresb@usm.cl}

\noindent Madeleine Weinstein, University of Puget Sound
 \hfill  {\tt mweinstein@pugetsound.edu}

\end{document}